\title
{Rational spectral  methods for PDEs involving  fractional Laplacian  in unbounded domains\thanks{The work of the first author is supported by the National Natural Science Foundations of China under grant 91630312. The research of the second author is supported by  Singapore MOE AcRF Tier 2 Grants: MOE2017-T2-2-014 and MOE2018-T2-1-059.  The third author is supported by a Hong Kong PhD Fellowship. The last author is partially supported by the NSF of China (under grant numbers 11822111, 11688101, 91630203, 11571351, and 11731006), the science challenge project (No. TZ2018001), NCMIS, and the youth innovation promotion association (CAS).}}
\author
{
Tao Tang\footnotemark[1]
\and Li-Lian Wang\footnotemark[2]
\and Huifang Yuan\footnotemark[3]
\and Tao Zhou\footnotemark[4]
}
\newtheorem{remark}{Remark}[section]
\begin{document}
\maketitle

\renewcommand{\thefootnote}{\fnsymbol{footnote}}

\footnotetext[1]{Department of Mathematics, Southern University of Sciences and Technology, Shenzhen, China (tangt@sustc.edu.cn)}

\footnotetext[2]{Division of Mathematical Sciences, School of Physical and Mathematical Sciences, Nanyang Technological University, Singapore, 637371 (lilian@ntu.edu.sg)}

\footnotetext[3]{Department of Mathematics, Hong Kong Baptist University, Hong Kong, China (13480510@life.hkbu.edu.hk)}

\footnotetext[4]{LSEC, Institute of Computational Mathematics, Academy of Mathematics and Systems Science,
Chinese Academy of Sciences, Beijing, China (tzhou@lsec.cc.ac.cn)}

\renewcommand{\thefootnote}{\arabic{footnote}}

\slugger{mms}{xxxx}{xx}{x}{x--x}%slugger should be set to mms, siap, sicomp, sicon, sidma, sima, simax, sinum, siopt, sisc, or sirev
\begin{abstract}
Many PDEs involving fractional Laplacian are naturally set in unbounded domains with underlying solutions decay very slowly, subject to certain power laws. Their numerical solutions are under-explored.
This paper aims at developing accurate spectral methods using rational basis (or modified mapped Gegenbauer functions) for such models in unbounded domains. The main building block of the spectral algorithms is the explicit representations for the Fourier transform and fractional Laplacian of the rational basis, derived from
some useful integral identites related to modified Bessel functions.    With these at our disposal, we
can construct rational spectral-Galerkin and direct collocation schemes by pre-computing  the associated fractional differentiation matrices. We obtain optimal error estimates of rational spectral approximation in the fractional Sobolev spaces, and analyze the optimal convergence of the proposed Galerkin scheme.
We also provide ample numerical results to show that the rational method outperforms the Hermite function approach.
 %The method consists of expanding the solution with proper global basis functions, and imposing collocation conditions on Gauss points. Here we consider the modified rational functions and derive explicit formulas for the associated differential matrix (DM), whose components are computed in a recurrence way. Convergence analysis is conducted and several numerical examples are carried out to show the efficiency of the proposed methods.
\end{abstract}

\begin{keywords} Fractional Laplacian,   Gegenbauer polynomials, modified rational functions, unbounded domains, Fourier transforms, spectral methods.
\end{keywords}

\begin{AMS} 65N35, 65M70, 41A05, 41A25.
\end{AMS}

\pagestyle{myheadings}
\thispagestyle{plain}
\markboth{Tang et al}{Spectral methods  for fractional Laplacian}

\section{Introduction} Diffusion  is a ubiquitous physical process, typically modeled by
partial differential equations (PDEs) with usual Laplacian operators. Although they can describe
the anisotropy of diffusion,  many systems in science, economics and engineering
exhibit anomalous diffusion, which can be more accurately and realistically modelled by
PDEs with fractional Laplacian operators   \cite{physics_II,BFW98,dPQRV11}.
In the past decade,  tremendous research attention has  been paid to  the analysis and numerical studies
of fractional PDEs. The finite difference method and the finite element method are two widely studied methods in this direction (see, e.g., \cite{differenceHuang,differenceJi,elementJin,CX13,TD15,BJ17,WZ17,SX17,WZ18,GLW18,TYZ18} and references therein).  Most of efforts are devoted to dealing with the nonlocal nature or singularities of the fractional operators. Another powerful approach is the spectral method, which is more suitable for the non-local feature of the fractional operators (see, e.g.,  \cite{spectralgeorge,spectralshen,Chen_Mao_Shen,KZK16,LZK16,SS17,SS18,YM18,SLW18}).
However, most of these works  are  for fractional problems in bounded domains.  In particular, we refer to Bonito et al \cite{BBNO2018} for an up-to-date  review of the various numerical methods  for fractional diffusion based on different formulations of the fractional Laplacian.

    % In this approach, the key is to construct suitable basis functions to handle to the solution singularities. Along this direction, a recent advance is brought by Karniadakis and co-authors who proposed the so-called Jacobi \textit{poly-fractonomials} based spectral methods \cite{spectralgeorge}. These bases are eigenfunctions of the corresponding fractional and tempered fractional Sturm-Liouville problems. Another approach that employs the generalized Jacobi functions (GJFs) is proposed by Shen et al. \cite{spectralshen}. Those bases are adapted to the fractional operator, as a fractional derivative of poly-fractonomials/GJFs is simply another poly-fractonomials/GJFs with different parameters. Consequently, fractional derivatives become a local operator in the physical space spanned by poly-fractonomials/GJFs, and this property leads to very efficient spectral methods for fractional PDEs in bounded domains. The poly-fractonomials/GJFs have been successfully applied to various fractional models \cite{Chen_Mao_Shen,KZK16,LZK16}. However, compared to fractional PDEs in bounded domain, little works have been done for fractional PDEs defined on unbounded domains.

 It is known that many  physically motivated fractional diffusive problems are naturally set in unbounded domains, but their  investigation is still under-explored. For usual PDEs in unbounded domains, several approaches have been widely used in practice (see, e.g.,  \cite{Boyd01,She.W09} and the original references cited therein). The first is direct domain truncation that works well for problems with rapidly decaying solutions, but  is not feasible for fractional problems as
the underlying solutions
usually decay slowly, subject to  certain power laws at infinity.
 On the other hand, the naive truncation introduces
nonphysical singularities at the interface where the unbounded domain is terminated.
The second is to design a suitable transparent boundary condition or  artificial sponge layer, but this appears highly nontrivial for the fractional Laplacian.  The third is the use of
orthogonal functions in unbounded domains, which has been successfully applied  to many usual PDEs (see, e.g., \cite{Tang93,Boyd01,FGT02,Ma.ST05,She.W09,SWT}).
Very recently, spectral methods for fractional PDEs on the half line are proposed by \cite{spectralArab,LZK16} -- using the generalized Laguerre functions as bases -- extending the idea of \cite{spectralgeorge}.  A two-domain spectral approximations by Laguerre functions is developed in \cite{CSW.18} for tempered fractional PDEs on the whole line.   Mao and Shen \cite{MaoShen} proposed both  spectral-Galerkin and collocation methods using Hermite functions for fractional PDEs in unbounded domains.  However, the collocation method therein relies on an equivalent formulation in frequency space by the Fourier transforms, and performs collocation methods to the equivalent formulation that involve forward/backward Hermite transforms.
Tang, Yuan and Zhou  \cite{hermitecollocation}  developed  \textit{direct} Hermite collocation methods with  explicit formulations for the differentiation matrices, which is therefore more robust for nonlinear problems.
Lastly, spectral approximation using non-classical orthogonal functions in unbounded domains -- image of classical Jacobi
polynomials through a suitable mapping, has proven to be more viable for usual PDEs
with solutions decaying algebraically
 (see, e.g.,  \cite{Boyd87a,Boyd01,GSW00,modifiedchebyshev,RYW19}), compared with approximation by Hermite/Laguerre functions.
 As such, the rational basis (or mapped Jacobi functions) should be more desirable for PDEs with fractional Laplacian, due to the slow decaying solution with long tails subject to certain power law. However, to the best of our knowledge, there is essentially no work available along this line.
 Moreover, the extension of the mapping technique  to the fractional setting is far from trivial, as we elaborate on below.

 In this paper, we intend to fill in this gap, and  design  rational spectral methods for a class of PDEs with
 fractional Laplacian in $\mathbb{R}^d$.
To fix the idea, we consider the model equation:
\begin{equation}\label{Laplace}
\begin{cases}
  (-\Delta)^{\alpha/2}u(x) +  \rho u(x) = f(x), \quad &x\in {\mathbb R}^d,\\
  u(x)=0,\quad &\lvert x\rvert\to \infty,
  \end{cases}
\end{equation}
for $\alpha\in (0,2),$ where the fractional Laplace operator is defined as in \cite{Lan72}:
\begin{equation}\label{sigular representation}
(-\Delta)^{\alpha/2}u(x):=C_{d,\alpha}{\rm p.v.}\int_{\mathbb{R}^d}\dfrac{u(x)-u(y)}{\lvert x-y\rvert^{d+\alpha}}dy \;\; \textmd{with} \;\;
C_{d,\alpha}:=\dfrac{{\alpha}2^{\alpha-1}\Gamma\Big(\dfrac{\alpha+d}{2}\Big)}{\pi^{d/2}\Gamma\Big(\dfrac{2-\alpha}{2}\Big)}.
\end{equation}
Here, p.v. stands for the Cauchy principal value, and $C_{d,\alpha}$ is a normalization constant.
%Note that the fractional Laplace operator $(-\Delta)^{\alpha/2}$ generalizes of the second order Laplace operator to the order of $\alpha$, where $0<\alpha<2$, and recovers the standard Laplace operator as $\alpha\to 2$.
Equivalently,  the fractional Laplacian can be defined as a pseudo-differential operator via the Fourier transform:
\begin{equation}\label{viafouriertransform}
  (-\Delta)^{\alpha/2}u(x):={\mathscr F}^{-1}\big[|\xi|^{\alpha} \mathscr{F}{\left[u\right]}({\xi})\big](x).
\end{equation}

For any expansion-based method, a critical issue is how to accurately evaluate  the point-wise value of the fractional Laplacian performing upon  the basis. For example, the key to the Hermite spectral method in \cite{MaoShen} is the use of the attractive  property that the Hermtie functions are the eigenfunctions of the Fourier transform, so the algorithm is largely implemented in the frequency $\xi$-space.
In contrast,  some analytically perspicuous
 formulas of $(-\Delta)^{\alpha/2}$ on the Hermite functions were derived in \cite{hermitecollocation}, which
led  themselves to the construction of efficient collocation algorithms in the physical $x$-space.
In the spirit of \cite{hermitecollocation}, we search for the analytic formulas for computing the fractional Laplacian of the rational basis functions -- the modified mapped Gegenbauer functions (MMGFs),  orthogonal with respect to a uniform weight. Although the formulas (see Theorem \ref{thm:mainformula}) are not as compact as those for the Hermite functions, we can accurately compute the fractional Laplacian of the rational basis  up to the degree $\thicksim 10^3$ by  using e.g., Maple or Mathematica. Moreover, with these analytic tools at our disposal, we are able to study their asymptotic behaviors and dependence of the
parameters so that the basis can be tailored to the decay rate of underlying solution.
We propose and analyze a spectral-Galerkin scheme, and obtain optimal estimates (see Theorem
\ref{Conv-thm}). We  also implement a direct collocation scheme based on the associated fractional differentiation matrices with the aid of the aforementioned explicit formulas. However, its error analysis  appears very  challenging and largely open. This is  mostly for the reason that  the fractional Laplacian takes
the rational basis to a  class of functions of completely different nature, as opposite to the usual Laplacian.
In the multi-dimensional case,  we implement the collocation schemes in the frequency space
(cf. \cite{MaoShen}), which relies on the approximability of  spectral expansions    to  ${\mathscr F}[f](\xi)/(|\xi|^{\alpha}+\rho).$ We show that the rational approach outperforms the Hermite method in accuracy. In fact,  it is common that
the Fourier transform of a functions decays much slower than the function itself, so the rational basis is more
desirable in this context.

The rest of the paper is organized as follows. In section \ref{sect:2},  we  collect some useful properties of  the Bessel  functions and Gegenbauer polynomials. In section \ref{sect:3}, we present the main formulas for computing the fractional Laplacian of the modified rational functions, and study the asymptotic properties.
In section \ref{sect:4},  we derive optimal error estimates of  the approximation by the modified rational functions in fractional Sobolev spaces.  We propose and analyse spectral-Galerkin methods using modified  rational basis functions in section \ref{sect:5}.  Then we implement the collocation methods in both one dimension and multiple dimensions in section \ref{sect:6}. The final section is for some concluding remarks.

%The  and the spectral collocation methods based on the modified rational functions are discussed in section 5 and section 6, respectively. We finally give some concluding remarks in section 7.

%\newpage
%
%Our spectral collocation methods handling the above equation consist of two parts: expanding the solution with the properly designed basis functions and imposing collocation conditions on the mapped Guass-Jacobi points. In particular, we shall consider modified rational functions transformed from Gegenbauer polynomials which are then multiplied by another term. In this paper, we shall derive explicit formulas for the associated differential matrix (DM), for which the components can be computed efficiently by using a recurrence formula. It is noticed that this method admit spectral convergence for solutions with algebraic decay in infinity.

%In the past decade,  tremendous research attention has  been paid to  the analysis and numerical studies
%of fractional differential equations.

 %can be more accurately described by  using fractional partial differential equations (FPDEs) rather than the traditional approaches \cite{physics_II,BFW98,dPQRV11} due to their unique capacity in description of long-range time memory and spatial interactions.
%
  %However, the analytic solutions of the FPDEs are usually unknown or derived derived via Green functions and Fox functions that are difficult to evaluate. This also leads to an intensive investigation over the past two decades on efficient numerical methods for FPDEs.
%

\section{Preliminaries}\label{sect:2}
In this section, we  make necessary preparations for the algorithm development and analysis in the forthcoming sections.
More precisely, we review some relevant properties of the hypergeometric functions, Gegenbauer polynomials,  Bessel functions and  their interwoven relations.
%preliminary definitions, relations and lemmas that will be useful for designing our rational spectral collocation methods.

\subsection{Bessel functions}
Recall that the Bessel function of the first kind of real order $\mu$ has the series expansion (cf. \cite{Olver2010Book}):
\begin{equation}\label{bessel1}
J_{\mu}(x)=\sum_{m=0}^\infty\, \frac{(-1)^m}{m! \,\Gamma(m+\mu+1)} \left(\frac{x}{2}\right)^{2m+\mu}.
\end{equation}
The modified Bessel functions of the first and second kinds are defined by
\begin{align}\label{mbessel}
&I_{\mu}(x)={\rm i}^{-\mu}J_{\mu}({\rm i} x), \quad %=\sum_{m=0}^\infty\, \frac{1}{m! \,\Gamma(m+\mu+1)} \left(\frac{x}{2}\right)^{2m+\mu}.\nonumber\\
K_{\mu}(x)=\frac{\pi}{2}\frac{I_{-\mu}(x)-I_{\mu}\left(x\right)}{\sin(\mu\pi)},
\end{align}
where ${\rm i}=\sqrt{-1}$ is the complex unit.
 For the modified Bessel functions of the second kind $K_{\mu}(x)$,  we have  the following important integral identities  (see \cite[P. 738]{tableofintegrals}):
 for  $-\lambda \pm \mu<1 $ and  $a,b>0,$
\begin{align}\label{Kintegral}
\int_{0}^{\infty} \!x^{\lambda}K_{\mu} ( ax)\cos(bx) dx &= {2^{\lambda-1}a^{-\lambda-1}\Gamma\Big(\frac{\mu+\lambda+1}{2}\Big)
\Gamma\Big(\frac{1+\lambda-\mu}{2}\Big)}\,\nonumber \\
&\;\;\;\;  \times  {}_{2}F_{1}\Big(\frac{\mu+\lambda+1}{2}, \frac{1+\lambda-\mu}{2};\!\frac{1}{2}; -\frac{b^2}{a^2} \Big),
\end{align}
and for $-\lambda \pm \mu<2$ and $a,b>0,$
\begin{align}\label{Kintegral2}
\int_{0}^{\infty}  x^{\lambda}K_{\mu} (ax) \sin(bx) dx & = \frac{{2^{\lambda}b\, \Gamma \big(\frac{2+\mu+\lambda}{2}\big) \Gamma\big(\frac{2+\lambda-\mu}{2}\big)}}{a^{2+\lambda}}\nonumber \\
& \;\;\;\; \times {}_{2}F_{1}\Big(\frac{2+\mu+\lambda}{2}, \frac{2+\lambda-\mu}{2}; \frac{3}{2} ;-\frac{b^2}{a^2}\Big).
\end{align}
Here, $\Gamma(\cdot)$ is the usual Gamma function, and ${}_{2}F_{1}$ is the hypergeometric function  defined in \eqref{F21defn} below.

\subsection{Hypergeometric functions}
For  any real $a,b,c$ with $c\not=0, -1,-2,\cdots,$  the hypergeometric function  is a power series defined by %(cf. \cite[Sect. 9.1]{tableofintegrals}):
\begin{equation}\label{F21defn}
{}_{2}F_{1}\left(a,b;c;x\right)=\sum_{k=0}^{\infty}\frac{\left(a\right)_{k}\left(b\right)_{k}}{\left(c\right)_{k}}\frac{x^{k}}{k!}, \quad {\rm for}\;\;  |x|<1,
\end{equation}
 and by analytic continuation elsewhere (cf. \cite[P. 1014]{tableofintegrals} or \cite[Ch. 2]{Andrews99}).  Here $(a)_{k}$ is the rising Pochhammer symbol, i.e.,
$$(a)_0=1,\quad (a)_k=a(a+1)...(a+k-1)=\frac{\Gamma(a+k)}{\Gamma(a)}, \;\;\; k\in {\mathbb N}.$$
It is known that  the series ${}_2F_1(a,b;c; x)$ is absolutely convergent for all $|x|<1.$ Moreover,
 (i) if $c-a-b>0,$ the series ${}_2F_1(a,b;c; x)$  is absolutely convergent at $x=\pm 1;$  (ii)
if $-1<c-a-b\le 0,$ the series ${}_2F_1(a,b;c; x)$  is conditionally convergent at $x=-1,$ but it is divergent  at $x=1;$
(iii) if $c-a-b\le -1$, it diverges at $x=\pm 1.$
Its divergent behaviour at $x=1$ can be  characterised as follows  (cf. \cite[Ch. 2]{Andrews99}).
 \begin{itemize}
\item If $c=a+b,$ then
\begin{equation}\label{Nist15421}
\lim_{z\to 1^-}\frac{ {}_2F_1(a,b;a+b;z)}{-\ln (1-z)}=\frac{\Gamma(a+b)}{\Gamma(a)\Gamma(b)}.
\end{equation}
\item If $c<a+b,$ then
\begin{equation}\label{Nist15421cc}
\lim_{z\to 1^-}\frac{ {}_2F_1(a,b;c;z)}{(1-z)^{c-a-b}}=\frac{\Gamma(c)\Gamma(a+b-c)}{\Gamma(a)\Gamma(b)}.
\end{equation}
\end{itemize}
From the definition \eqref{F21defn}, we can easily obtain
\begin{equation*}
\frac{d^{k}}{dx^{k}}{}_{2}F_{1}(a,b;c;x)=\frac{(a)_{k}(b)_{k}}{(c)_{k}}{}_{2}F_{1}(a+k,b+k;c+k;x).
\end{equation*}
%Also it has the recursion relations for
%\begin{equation}\label{contiguous}
%{}_{2}F_{1}\left(a\pm 1,b;c;x\right), \quad{}_{2}F_{1}\left(a,b\pm 1;c;x\right), \text{and} \quad {}_{2}F_{1}\left(a,b;c\pm 1;x\right)
%\end{equation}
According to  \cite[P. 1019]{tableofintegrals}, there holds
\begin{align}\label{hyperrecur}
(2a-c-ax+bx){}_{2}F_{1}&(a,b;c;x)+(c-a){}_{2}F_{1}(a-1,b;c;x)\nonumber\\
&+a(x-1){}_{2}F_{1}(a+1,b;c;x)=0.
\end{align}
We also recall the property of hypergeometric functions related to transformations of variable (cf. \cite[P. 390]{Olver2010Book}):
\begin{equation}\label{lintransf}
{}_{2}F_{1}(a,b;c;x)=(1-x)^{-a}\, {}_{2}F_{1}\Big(a,c-b;c;\frac x {x-1}\Big),  %\quad |{\rm ph}(1-x)|<\pi,
\end{equation}
and  the Pfaff's formula on the linear transformation (cf. \cite[(2.3.14)]{Andrews99}): for integer $n\ge 0,$ %see in \url{http://dlmf.nist.gov/15.8}, as follows:
\begin{equation}\label{hypertransform}
{}_{2}F_{1}(-n,b;c;x)=\frac{(c-b)_{n}}{(c)_{n}}{}_{2}F_{1}(-n,b;b-c-n+1;1-x).
\end{equation}
Like \eqref{Kintegral}-\eqref{Kintegral2}, the following integral formulas (cf. \cite[P. 825]{tableofintegrals}) play a very important role in the  algorithm development:
for real $\mu>0$ and real $a,b,c>0,$
\begin{align}\label{Kfourier1}
\int_{0}^{\infty}\cos(\mu x){}_{2}F_{1}\Big(a,b;\frac{1}{2};-c^2 x^{2}\Big)dx=2^{-a-b+1}\pi c^{-a-b} \mu^{a+b-1}\frac{K_{a-b}({\mu}/{c})}{\Gamma(a)\Gamma(b)},
\end{align}
and for $a,b>{1}/{2},$
\begin{align}\label{Kfourier2}
\int_{0}^{\infty}x\sin(\mu x){}_{2}F_{1}\Big(a,b;\frac{3}{2};-c^2 x^{2}\Big)dx=2^{-a-b+1}\pi c^{-a-b} \mu^{a+b-2}\frac{K_{a-b}({\mu}/{c})}{\Gamma(a)
\Gamma(b)}.
\end{align}

\subsection{Gegenbauer polynomials}
Gegenbauer polynomials, % or ultraspherical polynomials,
denoted by  $C_{n}^{\lambda}(t),$  \; $ t\in I:=(-1,1)$ and $\lambda>-1/2$,  generalise Legendre and  Chebyshev polynomials.
%, and are special cases of Jacobi polynomials $J_{n}^{\alpha,\beta}\left(t\right)$, $t\in I=\left(-1,1\right)$,
%\begin{equation}\label{gegenjacobi}
%C_{n}^{\lambda}\left(t\right)=\frac{\left(2\lambda\right)_{n}}{\left(\lambda+\frac{1}{2}\right)_{n}}J_{n}^{\left(\lambda-1/2,\lambda-1/2\right)}\left(t\right), \quad \lambda>-\frac{1}{2}.
%\end{equation}
They are defined by the three-term recurrence relation (cf. \cite[P. 1000]{tableofintegrals}):
\begin{equation}\label{gegenbauerrecurrence}
\begin{split}
&nC_{n}^{\lambda}(t)=2t(n+\lambda-1)C_{n-1}^{\lambda}(t)-(n+2\lambda-2)C_{n-2}^{\lambda}(t), \;\; n\geq 2,\\
&C_{0}^{\lambda}(t)=1,\quad C_{n}^{\lambda}(t)=2\lambda t.
\end{split}
\end{equation}
They are orthogonal  with respect to the weight function $\omega_\lambda(t)=(1-t^2)^{\lambda-1/2}$:
\begin{equation}\label{gegenor}
\int_{-1}^{1}C_{n}^{\lambda}(t)C_{m}^{\lambda}(t)\omega_\lambda(t)\, dt
=\gamma_{n}^{\lambda}\delta_{nm},\quad  \gamma_{n}^{\lambda}=\frac{\pi 2^{1-2\lambda}\Gamma(n+2\lambda)}{n!\left(n+\lambda\right)\Gamma^2(\lambda)}.
\end{equation}
%where
%\begin{equation*}
%\gamma_{n}^{\lambda}=\frac{\pi 2^{1-2\lambda}\Gamma(n+2\lambda)}{n!\left(n+\lambda\right)\left[\Gamma\left(\lambda\right)\right]^2}.
%\end{equation*}
The Gegenbauer polynomials can be defined by the hypergeometric functions (\cite[P. 1000]{tableofintegrals}):
\begin{equation}\label{GenHyper}
\begin{split}
&C_{2n}^{\lambda}(t)=\frac{(-1)^n}{(\lambda+n)B(\lambda,n+1)}\, {}_{2}F_{1}
\Big(\!-n,n+\lambda;\frac{1}{2};t^2\Big);\\[4pt]
&C_{2n+1}^{\lambda}(t)=\frac{(-1)^n 2t}{B(\lambda,n+1)}\, {}_{2}F_{1}\Big(\!-n,n+\lambda+1;\frac{3}{2};t^2\Big),
\end{split}
\end{equation}
where $B(\cdot,\cdot)$ is the Beta function satisfying  (cf. \cite[P. 918]{tableofintegrals}):
\begin{equation}\label{Betafun}
B(x,y)=\frac{\Gamma(x)\Gamma(y)} {\Gamma(x+y)}.
\end{equation}
%In what follows, we find it is more convenient to use following
Using the linear transformation  \eqref{hypertransform} and \eqref{GenHyper}-\eqref{Betafun},  we  have
\begin{equation}\label{GenHyper2}
\begin{split}
 & C_{2n}^{\lambda}(t)= a_n^{\lambda} \,\, {}_{2}F_{1}\Big(\!-n,n+\lambda;\lambda+\frac{1}{2};1-t^2\Big),\\[4pt]
 & C_{2n+1}^{\lambda}(t)= b_n^{\lambda} \, t \, {}_{2}F_{1}\Big(\!-n,n+\lambda+1;\lambda+\frac{1}{2};1-t^2\Big),
 \end{split}
\end{equation}
where
\begin{equation}\label{anbnlam}
 a_n^{\lambda}=\frac{\left(\lambda\right)_{n}}{\left(1\right)_{n}}\frac{\left(\lambda+\frac{1}{2}\right)_{n}}{\left(\frac{1}{2}\right)_{n}},\quad  b_n^{\lambda}=\frac{2\lambda\left(\lambda+1\right)_{n}}{\left(1\right)_{n}}
\frac{\left(\lambda+\frac{1}{2}\right)_{n}}{\left(\frac{3}{2}\right)_{n}}.
\end{equation}
%
%\begin{align}\label{Cnlambda}
%&C_{2n}^{\lambda}\left(t\right)=\frac{\left(-1\right)^n}{\left(\lambda+n\right)B\left(\lambda,n+1\right)}\frac{\left(\frac{1}{2}-n-\lambda\right)_{n}}{\left(\frac{1}{2}\right)_{n}}{}_{2}F_{1}\left(-n,n+\lambda;\lambda+\frac{1}{2};1-t^2\right) \nonumber \\
%&=\frac{\left(\lambda\right)_{n}}{\left(1\right)_{n}}\frac{\left(\lambda+\frac{1}{2}\right)_{n}}{\left(\frac{1}{2}\right)_{n}}{}_{2}F_{1}\left(-n,n+\lambda;\lambda+\frac{1}{2};1-t^2\right); \\
%&C_{2n+1}^{\lambda}\left(x\right)=\frac{\left(-1\right)^n 2t}{B\left(\lambda,n+1\right)}\frac{\left(\frac{1}{2}-n-\lambda\right)_{n}}{\left(\frac{3}{2}\right)_{n}}
%{}_{2}F_{1}\left(-n,n+\lambda+1;\lambda+\frac{1}{2};1-t^2\right)\nonumber \\
%&=\frac{2\lambda\left(\lambda+1\right)_{n}}{\left(1\right)_{n}}
%\frac{\left(\lambda+\frac{1}{2}\right)_{n}}{\left(\frac{3}{2}\right)_{n}}t{}_{2}F_{1}\left(-n,n+\lambda+1;\lambda+\frac{1}{2};1-t^2\right).
%\end{align}
\begin{remark}
Note that when $\lambda=0$,  we understand  the classical Chebyshev polynomials in the sense of   %the coefficient in equation \eqref{gegenjacobi} makes it unable to represent Chebyshev polynomials. Nevertheless
\begin{equation*}
T_{n}\left(t\right)=\frac{n}{2}\lim_{\lambda\to 0} \frac{C_{n}^{\lambda}(t)}{\lambda},\quad n\geq 1.
\end{equation*}
%And we can prove the relation between Chebyshev polynomial and Hypergeometric function as
Correspondingly, it follows from \eqref{GenHyper2} that
\begin{equation*}
\begin{split}
&T_{2n}(t)={}_{2}F_{1}\Big(\!\!-n,n;\frac{1}{2};1-t^2\Big); \quad T_{2n+1}(t)=t\,{}_{2}F_{1}\Big(\!\!-n,n+1;\frac{1}{2};1-t^2\Big).
\end{split}
\end{equation*}
Here,  we still denote $T_{n}(t):=C_{n}^{0}(t)$.
\end{remark}

\section{Fractional Laplacian of the modified mapped Gegenbauer functions}\label{sect:3}
\setcounter{equation}{0}
\setcounter{theorem}{0}

In this section, we introduce the rational basis functions through the Gegenbauer polynomials with a singular mapping. For convenience, we  term the resulting mapped basis as modified mapped Gegenbauer functions (MMGFs), which are different from the usual mapped Gegenbauer functions by absorbing the weight function in the basis.
We also present the explicit formulas for the evaluation of their fractional Laplacian, which plays an essential role in the spectral algorithms.

\subsection{The mapping and MMGFs} Consider the one-to-one mapping between
$t\in I=(-1,1)$ and $x\in \mathbb R=(-\infty,\infty)$ of the form:
\begin{equation}\label{algebraicmapping}
 x=\frac{t}{\sqrt{1-t^2}} \quad\; {\rm or} \quad\;  t=\frac{x}{\sqrt{1+x^2}},\quad t\in I,\;\; x\in {\mathbb R}.
\end{equation}
It is clear that
\begin{equation}\label{dxdt}
  1-t^2=\frac 1 {1+x^2},\quad \frac{dx}{dt}=\frac{1}{(1-t^2)^{3/2}}.
\end{equation}

%Define the mapped Gengenbauer polynomials with a multiple of a weighted function as the modified rational basis functions:
 \begin{definition}\label{gendef} For $\lambda>-1/2,$ let  $C_n^\lambda(t), t\in I=(-1,1), $  be the Gegenbauer polynomial of degree $n$ as defined in \eqref{gegenbauerrecurrence}. We define the modified mapped Gegenbauer functions (MMGFs) as
 \begin{equation}\label{modifiedrational}
R_{n}^{\lambda}\left(x\right):=\big(1+x^2\big)^{-\frac{\lambda+1}{2}}C_{n}^{\lambda}\Big(\frac{x}{\sqrt{1+x^2}}\Big),\quad x\in{\mathbb R},
\end{equation}
or equivalently,
 \begin{equation}\label{modifiedrational2}
R_{n}^{\lambda}\left(x\right)=S(t) C_{n}^{\lambda}(t),\quad S(t):=\sqrt{\omega_\lambda(t) \frac{dx}{dt}}=(1-t^2)^{\frac{\lambda+1}2},
\end{equation}
where $x,t$ are associated with the mapping  \eqref{algebraicmapping}.
\end{definition}

One verifies readily from \eqref{gegenor} and \eqref{modifiedrational}-\eqref{modifiedrational2} that
\begin{equation}\label{orthss}
\int_{-\infty}^{\infty}R_{n}^{\lambda}(x)R_{m}^{\lambda}(x)\,dx=\gamma_{n}^{\lambda}\delta_{nm}.
\end{equation}
Thanks to  \eqref{gegenbauerrecurrence} and \eqref{modifiedrational}, the MMGFs  satisfy the three-term recurrence relation:
\begin{equation}\label{modifiedrationalrecurrence}
\begin{split}
&nR_{n}^{\lambda}(x)=\frac{2x}{\sqrt{1+x^2}}(n+\lambda-1)R_{n-1}^{\lambda}(x)-
(n+2\lambda-2)R_{n-2}^{\lambda}(x),\;\;\; n\geq 2;\\[2pt]
&R_{0}^{\lambda}(x)=\frac{1}{(1+x^2)^{\frac{\lambda+1}{2}}},\quad R_{1}^{\lambda}(x)= \frac{2\lambda x}{(1+x^2)^{1+\frac{\lambda}{2}}}.
\end{split}
\end{equation}
Moreover, we can show that
\begin{equation*}
\lim_{x\to\infty}\left(1+x^2\right)^{\frac{\lambda+1}{2}}R_{n}^{\lambda}(x)=\frac{\left(2\lambda\right)_{n}}{n!}, \quad \lim_{x\to-\infty}\left(1+x^2\right)^{\frac{\lambda+1}{2}}R_{n}^{\lambda}(x)=(-1)^{n}\frac{\left(2\lambda\right)_{n}}{n!}.
\end{equation*}
It is clear that by \eqref{F21defn},  \eqref{GenHyper2}  and \eqref{dxdt}, we have
\begin{align}\label{Rnlambda1}
R_{2n}^{\lambda}(x)&=\frac{a_n^\lambda}{(1+x^2)^{\frac{\lambda+1}{2}}}\,
{}_{2}F_{1}\Big(\!-n,n+\lambda;\lambda+\frac{1}{2};\frac{1}{1+x^2}\Big) \nonumber \\[2pt]
&=a_n^\lambda\, \sum_{k=0}^{n}\frac{\left(-n\right)_{k}\left(n+\lambda\right)_{k}}
{\left(\lambda+\frac{1}{2}\right)_{k}k!}\frac{1}{\left(1+x^2\right)^{k+\frac{\lambda+1}{2}}},
 \end{align}
 and
\begin{align}
\label{Rnlambda2}
R_{2n+1}^{\lambda}(x)&=\frac{b_n^\lambda}{\left(1+x^2\right)^{\frac{\lambda+1}{2}}}\frac{x}{\sqrt{1+x^2}}
\,{}_{2}F_{1}\Big(\!-n,n+\lambda+1;\lambda+\frac{1}{2};\frac{1}{1+x^2}\Big)\nonumber \\[2pt]
&=b_n^\lambda\,
\sum_{k=0}^{n}\frac{\left(-n\right)_{k}\left(n+\lambda+1\right)_{k}}
{\left(\lambda+\frac{1}{2}\right)_{k}k!}\frac{x}{\left(1+x^2\right)^{k+\frac{\lambda}{2}+1}}.
\end{align}
It is seen that the MMGFs are expressed in terms of
\begin{equation}\label{modkexp}
  \frac{1} {(1+x^2)^\gamma}\;\; \text{with}\;\;  \gamma=k+\frac{\lambda+1} 2 \quad
  {\rm or}\quad  \frac{x} {(1+x^2)^\gamma}\;\; \text{with}\;\;  \gamma=k+\frac{\lambda} 2+1.
\end{equation}

\subsection{Formulas for computing fractional Laplacian of MMGFs}

%Before computing the fractional Laplacian of rational functions, we first give the result for some simple functions. Here ``simple" functions stand for $\frac{1}{\left(1+x^2\right)^{\gamma}}$ and $\frac{x}{\left(1+x^2\right)^{\gamma+\frac{1}{2}}}$ for $r>0$.
%\subsection{Computing fractional Laplacian with simple functions}

In view of \eqref{Rnlambda1}-\eqref{modkexp}, we first compute the fractional Laplacian of the simple functions in \eqref{modkexp}.
%We first give in the following theorem of the fractional Laplacian of $\frac{1}{\left(1+x^2\right)^{\gamma}}$ and then use the definition via Fourier transform in \eqref{via fourier transform} to prove this.
\begin{theorem}\label{Lapdelta} For real $s>0,$ we have that for any $\gamma>0,$
\begin{equation}\label{even}
(-\Delta)^{s}\Big\{\frac{1}{\left(1+x^2\right)^{\gamma}}\Big\}=
A_s^{\gamma}\, {}_{2}F_{1}\Big(s+\gamma,s+\frac{1}{2};\frac{1}{2};-x^2\Big),
%\frac{A_s^{\gamma}} {(1+x^2)^{s+\gamma}}\,
% {}_{2}F_{1}\Big(\!-s, s+\gamma;\frac{1}{2};\frac{x^2}{1+x^2}\Big),
\end{equation}
and for any $\gamma>1/2, $
\begin{equation}\label{odd}
(-\Delta)^{s}\Big\{\frac{x}{\left(1+x^2\right)^{\gamma}}\Big\}= (2s+1)A_s^\gamma\,
\, x \, {}_{2}F_{1}\Big(s+\gamma,s+\frac{3}{2}\, ;\frac{3}{2};-x^2\Big),
%\frac{x} {(1+x^2)^{s+\gamma}}\,
% {}_{2}F_{1}\Big(\!-s, s+\gamma;\frac{3}{2};\frac{x^2}{1+x^2}\Big),
\end{equation}
 where the factor
\begin{equation}\label{constAsf}
A_s^\gamma:=  \frac{2^{2s}\Gamma(s+\gamma)\Gamma(s+\frac{1}{2})}  {\sqrt{\pi}\Gamma(\gamma)}.
\end{equation}
\end{theorem}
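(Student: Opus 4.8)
The plan is to use the Fourier characterization \eqref{viafouriertransform} of the fractional Laplacian, so that the identity reduces to a Fourier transform computation. First I would recall the classical formula for the Fourier transform of $(1+x^2)^{-\gamma}$ in terms of a modified Bessel function of the second kind: up to normalization one has, for $\gamma > 0$,
\begin{equation*}
\mathscr{F}\Big[\frac{1}{(1+x^2)^{\gamma}}\Big](\xi) = c_\gamma\, |\xi|^{\gamma - 1/2} K_{\gamma - 1/2}(|\xi|),
\end{equation*}
with an explicit constant $c_\gamma$ coming from the standard integral representation of $K_\mu$. Multiplying by $|\xi|^{2s}$ (writing $\alpha/2 = s$, so the symbol of $(-\Delta)^s$ is $|\xi|^{2s}$) gives $c_\gamma\,|\xi|^{2s + \gamma - 1/2} K_{\gamma - 1/2}(|\xi|)$, and the task becomes to invert this Fourier transform. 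Since the function is even, the inverse transform is an even cosine transform, and this is exactly the shape covered by the integral identity \eqref{Kintegral}: taking $\lambda = 2s + \gamma - 1/2$, $\mu = \gamma - 1/2$, $a = 1$, $b = |x|$ there produces a ${}_2F_1\big(\cdot,\cdot;\tfrac12;-x^2\big)$, which is precisely the right-hand side of \eqref{even}. Collecting the Gamma factors from $c_\gamma$, from the constant in \eqref{Kintegral}, and from the Fourier normalization should yield $A_s^\gamma$ as in \eqref{constAsf}; checking the arguments of the two ${}_2F_1$ parameters, $\tfrac{\mu+\lambda+1}{2} = s+\gamma$ and $\tfrac{1+\lambda-\mu}{2} = s + \tfrac12$, confirms the claimed formula.

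For the odd case \eqref{odd}, I would proceed analogously but start from $\mathscr{F}\big[x(1+x^2)^{-\gamma}\big]$, which is an odd (imaginary) multiple of a sine transform and again expressible through $K_{\gamma-3/2}$ (equivalently, differentiate the even Fourier transform in a parameter, or use that multiplication by $x$ corresponds to $\mathrm{i}\,\partial_\xi$). Multiplying by the symbol $|\xi|^{2s}$ and inverting via the sine-transform identity \eqref{Kintegral2} — now with the parameter choice matching $\tfrac{2+\mu+\lambda}{2} = s+\gamma$, $\tfrac{2+\lambda-\mu}{2} = s+\tfrac32$ and denominator parameter $\tfrac32$ — gives the ${}_2F_1\big(s+\gamma, s+\tfrac32;\tfrac32;-x^2\big)$ together with the prefactor $x$. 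The extra scalar $(2s+1)$ relative to the even case comes out of reconciling the normalization constant from \eqref{Kintegral2} (which carries a factor $b$ and shifted Gamma arguments) with $A_s^\gamma$; a short Gamma-function manipulation using $\Gamma(s+\tfrac32) = (s+\tfrac12)\Gamma(s+\tfrac12)$ and the doubling of the argument should produce exactly $(2s+1)A_s^\gamma$. The constraints $\gamma>0$ in \eqref{even} and $\gamma>1/2$ in \eqref{odd} are exactly what is needed so that the relevant exponents satisfy the convergence conditions $-\lambda\pm\mu<1$ (resp. $<2$) in \eqref{Kintegral}--\eqref{Kintegral2} and so that the original functions lie in a space where \eqref{viafouriertransform} applies.

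The main obstacle I anticipate is not any single step but the bookkeeping: one must be careful about (i) the precise Fourier transform normalization convention used in \eqref{viafouriertransform} (the factors of $2\pi$), (ii) correctly identifying $\mathscr{F}\big[(1+x^2)^{-\gamma}\big]$ with the right power of $|\xi|$ and the right index on $K$, and (iii) assembling several Gamma-function factors into the compact constant $A_s^\gamma$ without error — in particular verifying that the $\Gamma(\mu+\lambda+1)/2)$, $\Gamma((1+\lambda-\mu)/2)$ from \eqref{Kintegral} combine with the Bessel-transform constant to cancel down to $2^{2s}\Gamma(s+\gamma)\Gamma(s+\tfrac12)/(\sqrt\pi\,\Gamma(\gamma))$. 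A secondary subtlety is justifying that $(-\Delta)^s$ defined by the singular integral \eqref{sigular representation} agrees with the Fourier-multiplier definition \eqref{viafouriertransform} on these rational functions, i.e. that they have enough regularity and decay (for $\gamma$ in the stated ranges) for the two definitions to coincide; this is standard but should be invoked explicitly. Once the normalizations are pinned down, both \eqref{even} and \eqref{odd} follow by direct substitution into \eqref{Kintegral} and \eqref{Kintegral2} respectively.
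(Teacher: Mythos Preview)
Your proposal is correct and follows essentially the same route as the paper: compute the Fourier transform of $(1+x^2)^{-\gamma}$ (resp.\ $x(1+x^2)^{-\gamma}$) as a Bessel-$K$ function, multiply by $|\xi|^{2s}$, and invert via \eqref{Kintegral} (resp.\ \eqref{Kintegral2}) with exactly the parameter choices you list. The only cosmetic difference is that the paper obtains the initial Fourier transforms by writing $(1+x^2)^{-\gamma}={}_2F_1(\gamma,\tfrac12;\tfrac12;-x^2)$ and applying \eqref{Kfourier1}--\eqref{Kfourier2}, whereas you invoke the Bessel-$K$ formula directly; the subsequent inversion and Gamma-factor bookkeeping are identical.
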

\begin{proof}  Recall the formula (cf. \cite[15.4.6]{Olver2010Book}):
\begin{equation}\label{hyperration}
{}_{2}F_{1}(b,a;a;z)=(1-z)^{-b}.
\end{equation}
Note that \eqref{hyperration} also holds for $z<1,$ with the analytic extension by the transformation formula  \eqref{lintransf}
(see \cite[9.130]{tableofintegrals}).
Thus, we have
\begin{equation}\label{vexpF21}
v(x):=\frac{1}{(1+x^2)^{\gamma}}={}_{2}F_{1}\Big(\gamma,\frac{1}{2};\frac{1}{2};-x^{2}\Big).
\end{equation}
Then using \eqref{Kfourier1} with $\mu=\xi, \alpha=\gamma$ and $\beta=1/2,$  we obtain that for $\xi>0,$
\begin{equation}\label{neweqnA}
\begin{split}
\hat{v}(\xi)&:={\mathscr F}[v](\xi)=
\frac{1}{\sqrt{2\pi}}\int_{-\infty}^{\infty}\frac{e^{-{\rm i} x\xi}}{\left(1+x^2\right)^{\gamma}}dx=\frac{2}{\sqrt{2\pi}}\int_{0}^{\infty}\frac{\cos\left(x\xi\right)}{\left(1+x^2\right)^{\gamma}}dx\\
&=\sqrt{\frac{2}{\pi}}\int_{0}^{\infty}\cos\left(x\xi\right){}_{2}F_{1}\Big(\gamma,\frac{1}{2};\frac{1}{2};-x^{2}\Big)dx\\
&=\sqrt{2\pi}2^{-\gamma+\frac{1}{2}}\xi^{\gamma-\frac{1}{2}}\dfrac{K_{\gamma-\frac{1}{2}}(\xi)}
{\Gamma(\gamma)\Gamma(\frac{1}{2})}
= \dfrac{2^{1-\gamma}} {\Gamma(\gamma)} \xi^{\gamma-\frac{1}{2}}{K_{\gamma-\frac{1}{2}}(\xi)}.
\end{split}
\end{equation}
Note that for $\xi<0,$ we have $\hat v(\xi)= \hat v(-\xi)$.  Thus, from
the definition \eqref{viafouriertransform} and \eqref{neweqnA}, we obtain
\begin{equation}\label{neweqnB}
\begin{split}
(-\Delta)^{s} v(x) &= {\mathscr F}^{-1}\big[|\xi|^{2s}{\mathscr F}[v](\xi) \big]
=\frac{1}{\sqrt{2\pi}}\int_{-\infty}^{\infty}e^{{\rm i} x\xi}|\xi|^{2s}\hat{v}\left(\xi\right) d\xi\\
&=\frac{2^{1-\gamma}}{\sqrt{2\pi}\Gamma\left(\gamma\right)}\int_{0}^{\infty}\cos\left(x\xi\right)\xi^{2s+\gamma-\frac{1}{2}}
K_{\gamma-\frac{1}{2}}(\xi)\,d\xi.
\end{split}
\end{equation}
Then using the formula \eqref{Kintegral} with $\lambda=2s+\gamma-1/2, \mu=\gamma-1/2$ and $b=x,$   we find
\begin{equation}\label{neweqnC}
\begin{split}
(-\Delta)^{s} v(x)&=\frac{2^{2s}\Gamma(s+\gamma)\Gamma(s+\frac{1}{2})}
{\sqrt{\pi}\Gamma(\gamma)}\, {}_{2}F_{1}\Big(s+\gamma,s+\frac{1}{2};\frac{1}{2};-x^2\Big).
\end{split}
\end{equation}
Hence,  we derive  \eqref{even}. % is a direct consequence of \eqref{lintransf}  and \eqref{neweqnC}.

The formula \eqref{odd} can be derived in a similar  fashion.  Like \eqref{neweqnA}, we obtain from
 \eqref{Kfourier2} with $\mu=\xi, \alpha=\gamma$ and $\beta=3/2$ that for $\gamma>1/2$ and  $\xi>0,$
\begin{equation}\label{neweqnA0}
\begin{split}
{\mathscr F}[xv](\xi) & =
\frac{1}{\sqrt{2\pi}}\int_{-\infty}^{\infty}\frac{x e^{-{\rm i} x\xi}}{\left(1+x^2\right)^{\gamma}}dx=-\frac{2 {\rm i}}{\sqrt{2\pi}}\int_{0}^{\infty}\frac{x \sin(x\xi)}{\left(1+x^2\right)^{\gamma}}dx\\
&=- {\rm i} \sqrt{\frac{2}{\pi}}\int_{0}^{\infty} x \sin(x\xi)\, {}_{2}F_{1}\Big(\gamma,\frac{3}{2};\frac{3}{2};-x^{2}\Big) dx\\
&=-{\rm i} \sqrt{2\pi}2^{-\gamma-\frac{1}{2}}\xi^{\gamma-\frac{1}{2}}\dfrac{K_{\gamma-\frac{3}{2}}(\xi)}
{\Gamma(\gamma)\Gamma(\frac{3}{2})}
=-{\rm i}  \dfrac{2^{1-\gamma}} {\Gamma(\gamma)} \xi^{\gamma-\frac{1}{2}}{K_{\gamma-\frac{3}{2}}(\xi)}.
\end{split}
\end{equation}
Note that in this case, ${\mathscr F}[xv](-\xi)=-{\mathscr F}[xv](\xi).$ Similar to  \eqref{neweqnB}, we find
\begin{equation}\label{neweqnB0}
\begin{split}
(-\Delta)^{s} \big\{xv(x)\big\} &= {\mathscr F}^{-1}\big[|\xi|^{2s}{\mathscr F}[xv](\xi) \big]
=\frac{1}{\sqrt{2\pi}}\int_{-\infty}^{\infty}e^{{\rm i} x\xi}|\xi|^{2s}{\mathscr F}[xv](\xi) d\xi\\
&=\frac{2^{2-\gamma}}{\sqrt{2\pi}\Gamma (\gamma)}\int_{0}^{\infty}\sin(x\xi)\xi^{2s+\gamma-\frac{1}{2}}
K_{\gamma-\frac{3}{2}}\left(\xi\right)d\xi.
\end{split}
\end{equation}
 Thus, we derive from  \eqref{Kintegral2} with $\lambda=2s+\gamma-1/2, \mu=\gamma-3/2$ and $b=x$ that
\begin{equation}\label{neweqnC0}
\begin{split}
(-\Delta)^{s} \big\{xv(x)\big\} &=\frac{2^{2s+1}\Gamma(s+\gamma)\Gamma(s+3/2)}
{\sqrt{\pi}\Gamma(\gamma)}\, x \, {}_{2}F_{1}\Big(s+\gamma,s+\frac{3}{2}\, ;\frac{3}{2};-x^2\Big).
\end{split}
\end{equation}
Finally, the formula \eqref{odd} follows from the property: $\Gamma(z+1)=z\Gamma(z).$
%\eqref{hypertransform} and \eqref{neweqnC0} straightforwardly.
\end{proof}

Using the transformation formula \eqref{lintransf}, we can represent the formulas in Theorem \ref{Lapdelta} in the terms of the hypergeometric function defined through the series in \eqref{F21defn}. Note that the former  is more convenient for computation, while the latter is more suitable for analysis. % of the decay rate at infinity.
\begin{corollary}\label{Lapdelta-col} For real $s>0,$ we have that for any $\gamma>0,$
\begin{equation}\label{even-col}
(-\Delta)^{s}\Big\{\frac{1}{\left(1+x^2\right)^{\gamma}}\Big\}=%A_s^{\gamma}\, {}_{2}F_{1}\Big(s+\gamma,s+\frac{1}{2};\frac{1}{2};-x^2\Big),
\frac{A_s^{\gamma}} {(1+x^2)^{s+\gamma}}\,
 {}_{2}F_{1}\Big(\!-s, s+\gamma;\frac{1}{2};\frac{x^2}{1+x^2}\Big),
\end{equation}
and for any $\gamma>1/2, $
\begin{equation}\label{odd-col}
(-\Delta)^{s}\Big\{\frac{x}{\left(1+x^2\right)^{\gamma}}\Big\}= (2s+1)A_s^\gamma\,
%\, x \, {}_{2}F_{1}\Big(s+\gamma,s+\frac{3}{2}\, ;\frac{3}{2};-x^2\Big),
\frac{x} {(1+x^2)^{s+\gamma}}\,
 {}_{2}F_{1}\Big(\!-s, s+\gamma;\frac{3}{2};\frac{x^2}{1+x^2}\Big),
\end{equation}
 where $A_s^\gamma $ is defined as in \eqref{constAsf}.
%\begin{equation}\label{constAsf-col}
%A_s^\gamma:=  \frac{2^{2s}\Gamma(s+\gamma)\Gamma(s+\frac{1}{2})}  {\sqrt{\pi}\Gamma(\gamma)}.
%\end{equation}
\end{corollary}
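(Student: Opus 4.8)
The plan is to deduce the corollary directly from Theorem \ref{Lapdelta} by applying Pfaff's linear transformation \eqref{lintransf} to each of the two hypergeometric functions appearing in \eqref{even}--\eqref{odd}. The key observation that makes this clean is that the argument $z=-x^2$ satisfies $z/(z-1)=x^2/(1+x^2)\in[0,1)$ for \emph{every} real $x$, so that the transformed series converges pointwise on all of $\mathbb R$; hence the resulting identities are valid as identities of functions, not merely on $|x|<1$.

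For the even case I would set $a=s+\gamma$, $b=s+\frac{1}{2}$, $c=\frac{1}{2}$ and $z=-x^2$ in \eqref{lintransf}. Then $c-b=\frac{1}{2}-(s+\frac{1}{2})=-s$, $(1-z)^{-a}=(1+x^2)^{-(s+\gamma)}$, and $z/(z-1)=x^2/(1+x^2)$, which gives
\[
{}_{2}F_{1}\Big(s+\gamma,s+\tfrac{1}{2};\tfrac{1}{2};-x^2\Big)=\frac{1}{(1+x^2)^{s+\gamma}}\,{}_{2}F_{1}\Big(s+\gamma,-s;\tfrac{1}{2};\frac{x^2}{1+x^2}\Big).
\]
Multiplying by $A_s^\gamma$ (with $A_s^\gamma$ as in \eqref{constAsf}) and using the symmetry of ${}_2F_1$ in its first two parameters yields \eqref{even-col}. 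The odd case is entirely parallel: taking $a=s+\gamma$, $b=s+\frac{3}{2}$, $c=\frac{3}{2}$, $z=-x^2$ again produces $c-b=-s$ and the same prefactor $(1+x^2)^{-(s+\gamma)}$, so \eqref{odd} turns into \eqref{odd-col} after multiplying through by $(2s+1)A_s^\gamma\,x$.

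There is essentially no genuine obstacle in this argument; the only points requiring a little care are (i) justifying the use of \eqref{lintransf}, which is stated for $|z|<1$, at all real $x$ — here one appeals to analytic continuation in $z$, noting that the left-hand side of \eqref{even}/\eqref{odd} is already understood via analytic continuation and that the right-hand series converges for all real $x$ since $x^2/(1+x^2)<1$ — and (ii) keeping the Pochhammer bookkeeping straight so that the second parameter $b=s+\frac{1}{2}$ (respectively $s+\frac{3}{2}$) maps to $c-b=-s$ in both cases. I would close, as the statement itself indicates, by pointing out that this "series" form is the one convenient for the approximation analysis of section \ref{sect:4}, because it displays explicitly the algebraic decay rate $(1+x^2)^{-(s+\gamma)}$ of the fractional Laplacian applied to the MMGF building blocks, whereas the form in Theorem \ref{Lapdelta} is the one better suited to symbolic/numerical evaluation.
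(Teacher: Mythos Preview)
Your proposal is correct and matches the paper's own derivation exactly: the corollary is obtained from Theorem \ref{Lapdelta} by a single application of the Pfaff transformation \eqref{lintransf} with the parameter choices you wrote out, together with the symmetry of ${}_2F_1$ in its first two arguments. The paper likewise notes that the Corollary form is the one suited to analysis (decay rates) while the Theorem \ref{Lapdelta} form is preferable for evaluation.
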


\smallskip

%Using similar argument, for $\left\{\dfrac{x}{\left(1+x^2\right)^{\gamma+\frac{1}{2}}}\right\}$ we have the following  theorem
%\begin{theorem}
%\begin{equation}\label{odd}
%(-\Delta)^{\alpha/2}\left\{\frac{x}{\left(1+x^2\right)^{\gamma+\frac{1}{2}}}\right\}\!\!=\!\frac{2^{\alpha}\Gamma\left(\frac{\alpha+1}{2}+\gamma\right)\Gamma\left(\frac{3+\alpha}{2}\right)}{\Gamma\left(\frac{3}{2}\right)\Gamma\left(\gamma+\frac{1}{2}\right)}x{}{}_{2}F_{1}\!\!\left(\frac{\alpha+1}{2}+\gamma,\frac{3+\alpha}{2};\frac{3}{2};-x^2\right).
%\end{equation}
%\end{theorem}
\begin{remark}\label{sint-rem}
It is seen that if $s$ is a positive integer, then the hypergeometric functions in \eqref{even-col} and
\eqref{odd-col} become finite series.  We can directly verify by using   \eqref{F21defn} that
\begin{equation}\label{Dsdecay}
(-\Delta)^{s}\Big\{\frac{1}{\left(1+x^2\right)^{\gamma}}\Big\}\sim \frac{1} {(1+x^2)^{s+\gamma}},\quad
(-\Delta)^{s}\Big\{\frac{x}{\left(1+x^2\right)^{\gamma}}\Big\}\sim \frac{x} {(1+x^2)^{s+\gamma}},
\end{equation}
for $s=1,2,\cdots,$ and $|x|\to \infty.$ However, for non-integer $s>0,$  the hypergeometric functions may diverge as $|x|\to\infty.$
Indeed, we find from \eqref{Nist15421} and \eqref{Nist15421cc} that
\begin{itemize}
\item[(i)] if $\gamma=1/2,$ then
\begin{equation}\label{neweqnAln}
(-\Delta)^{s}\Big\{\frac{1}{\sqrt{1+x^2}}\Big\}\sim \frac{\ln (1+x^2)} {(1+x^2)^{s+1/2}},
\end{equation}
\item[(ii)] if $\gamma>1/2,$ then
\begin{equation}\label{neweqnBln}
(-\Delta)^{s}\Big\{\frac{1}{(1+x^2)^{\gamma}}\Big\}\sim \frac{1} {(1+x^2)^{s+1/2}},
\end{equation}
\item[(iii)] if $0<\gamma<1/2,$ it has the  same behaviour  as in \eqref{Dsdecay}.
\end{itemize}

\smallskip
Similarly, we can analyse the behaviour  at infinity for \eqref{odd-col} for three cases: (i) $\gamma=3/2;$ (ii) $\gamma>3/2$ and
(iii) $1/2<\gamma<3/2.$
\end{remark}

\smallskip
\begin{remark}\label{newsA}
In a distinctive difference with the integer case, we see that the decay rate in the fractional case in \eqref{neweqnAln} is independent of $\gamma,$ if $\gamma>1/2.$
\end{remark}

%
%Note that when $s=1$, the fractional Laplace operator becomes $(-\Delta)$. We can compute the fractional Laplacian of simple functions with $s=1$ by differentiating these functions twice and it turns out to be equal with the result given in \eqref{even} or \eqref{odd}, which confirms lemma \ref{Lapdelta}.
%\end{remark}

%\begin{remark}
%\color{red}{Change in decay rate after fractional Laplace.}
%\end{remark}

With the above preparations, we can now derive the explicit representation of  the fractional Laplacian of  $\{R_n^\lambda\}$.
\begin{theorem}\label{thm:mainformula} For real $s>0$ and $\lambda>-1/2,$  the fractional Laplacian of the MMGFs can be represented by
\begin{equation}\label{LapR2n}
\begin{split}
 (-\Delta)^s & R_{2n}^\lambda(x)= % \frac{a_n^\lambda} {(1+x^2)^{s+\frac{\lambda+1} 2}}\,
   a_n^\lambda  \\ &
 \times  \sum_{k=0}^{n}\frac{(-n)_{k}(n+\lambda)_{k}}
{(\lambda+\frac{1}{2})_{k}\, k!}  A_s^{k+\frac{\lambda+1}{2}} {}_{2}F_{1}\Big(s+k+\frac{\lambda+1}{2},s+\frac{1}{2};\frac{1}{2};-x^2\Big),
%\frac{A_s^{k+\frac{\lambda+1}{2}}} {(1+x^2)^{k}}\,
% {}_{2}F_{1}\Big(\!-s, s+k+\frac{\lambda+1}{2} ;\frac{1}{2};\frac{x^2}{1+x^2}\Big),
 \end{split}
\end{equation}
and
\begin{equation}\label{LapR2np1}
\begin{split}
 (-\Delta)^s &R_{2n+1}^\lambda(x)=  (2s+1)\,b_n^\lambda\, x  %\frac{(2s+1)b_n^\lambda\, x } {(1+x^2)^{s+\frac{\lambda} 2+1}}\,
 \\ &  \times \sum_{k=0}^{n}\frac{(-n)_{k}(n+\lambda+1)_{k}}
{(\lambda+\frac{1}{2})_{k}k!} \, A_s^{k+\frac{\lambda}{2}+1}
%\frac{A_s^{k+\frac{\lambda}{2}+1}  } {(1+x^2)^{k}}\,
 {}_{2}F_{1}\Big(s+k+\frac{\lambda}{2}+1,s+\frac 3 2;\frac{3}{2};-x^2\Big),
 \end{split}
\end{equation}
where the constants $a_n^\lambda, b_n^\lambda$ and $A_s^\gamma$ are the same as in \eqref{anbnlam} and
\eqref{constAsf}.
\end{theorem}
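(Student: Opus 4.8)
The plan is to reduce everything to Theorem~\ref{Lapdelta} by exploiting the finite expansions \eqref{Rnlambda1}--\eqref{Rnlambda2}, which express $R_{2n}^\lambda$ and $R_{2n+1}^\lambda$ as short linear combinations of the elementary blocks in \eqref{modkexp}. Concretely, I would start from
\[
R_{2n}^\lambda(x)=a_n^\lambda\sum_{k=0}^{n}\frac{(-n)_k(n+\lambda)_k}{(\lambda+\tfrac12)_k\,k!}\,\frac{1}{(1+x^2)^{k+\frac{\lambda+1}{2}}},\qquad
R_{2n+1}^\lambda(x)=b_n^\lambda\sum_{k=0}^{n}\frac{(-n)_k(n+\lambda+1)_k}{(\lambda+\tfrac12)_k\,k!}\,\frac{x}{(1+x^2)^{k+\frac{\lambda}{2}+1}},
\]
and act with $(-\Delta)^s$ term by term. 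Since $(-\Delta)^s$ defined by \eqref{viafouriertransform} is linear and each of the finitely many summands is a tempered distribution of exactly the type $(1+x^2)^{-\gamma}$ or $x(1+x^2)^{-\gamma}$ treated in Theorem~\ref{Lapdelta}, the termwise action is legitimate with no convergence issue to resolve.

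The one bookkeeping point to verify is that the parameter hypotheses of Theorem~\ref{Lapdelta} hold for every summand. In the even case the exponent is $\gamma=k+\tfrac{\lambda+1}{2}$ with $0\le k\le n$; because $\lambda>-\tfrac12$ we get $\gamma\ge\tfrac{\lambda+1}{2}>\tfrac14>0$, so the hypothesis $\gamma>0$ of \eqref{even} is met. In the odd case $\gamma=k+\tfrac{\lambda}{2}+1\ge\tfrac{\lambda}{2}+1>\tfrac34>\tfrac12$, so the hypothesis $\gamma>\tfrac12$ of \eqref{odd} is met. Hence I may substitute \eqref{even} (resp.\ \eqref{odd}) for each block.

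Carrying out the substitution then finishes the proof: in \eqref{even} with $\gamma=k+\tfrac{\lambda+1}{2}$ the block $(1+x^2)^{-\gamma}$ is replaced by $A_s^{\gamma}\,{}_2F_1\!\big(s+\gamma,s+\tfrac12;\tfrac12;-x^2\big)$, and summing against the coefficients $a_n^\lambda\frac{(-n)_k(n+\lambda)_k}{(\lambda+\frac12)_k k!}$ reproduces \eqref{LapR2n}; in \eqref{odd} with $\gamma=k+\tfrac{\lambda}{2}+1$ the block $x(1+x^2)^{-\gamma}$ is replaced by $(2s+1)A_s^{\gamma}\,x\,{}_2F_1\!\big(s+\gamma,s+\tfrac32;\tfrac32;-x^2\big)$, and summing against $b_n^\lambda\frac{(-n)_k(n+\lambda+1)_k}{(\lambda+\frac12)_k k!}$ reproduces \eqref{LapR2np1}. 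The constants $A_s^\gamma$, $a_n^\lambda$, $b_n^\lambda$ are precisely those of \eqref{constAsf} and \eqref{anbnlam}, so the identities follow verbatim.

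As for the hard part: there is essentially no remaining analytic difficulty, since Theorem~\ref{Lapdelta} already does the work of transporting $(-\Delta)^s$ across the Fourier transform for each elementary block. The only thing requiring a word of care is exactly the justification just described — that each block lies in the domain of $(-\Delta)^s$ and that the finite sum commutes with the operator — which is immediate by linearity. If one prefers, the entire argument can equivalently be run in frequency space: Fourier transform \eqref{Rnlambda1}--\eqref{Rnlambda2} term by term using \eqref{neweqnA} and \eqref{neweqnA0}, multiply by $|\xi|^{2s}$, and invert using \eqref{Kintegral}--\eqref{Kintegral2}; this yields the same formulas and makes the interchange manifestly valid because all sums are finite.
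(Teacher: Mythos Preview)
Your proposal is correct and follows essentially the same route as the paper: expand $R_{2n}^\lambda$ and $R_{2n+1}^\lambda$ via \eqref{Rnlambda1}--\eqref{Rnlambda2}, apply $(-\Delta)^s$ termwise by linearity, and substitute the formulas \eqref{even} and \eqref{odd} from Theorem~\ref{Lapdelta} with $\gamma=k+\tfrac{\lambda+1}{2}$ and $\gamma=k+\tfrac{\lambda}{2}+1$, respectively. Your explicit verification that the parameter hypotheses $\gamma>0$ and $\gamma>\tfrac12$ hold for each summand is a nice addition that the paper leaves implicit.
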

\begin{proof}  By \eqref{Rnlambda1},  we have
\begin{equation}\label{addA1}
\begin{split}
&(-\Delta)^s R_{2n}^\lambda(x)  =a_n^\lambda\, \sum_{k=0}^{n}\frac{(-n)_{k}(n+\lambda)_{k}}
{(\lambda+\frac{1}{2})_{k}k!} (-\Delta)^s \bigg\{\frac{1}{\left(1+x^2\right)^{k+\frac{\lambda+1}{2}}}\bigg\},
%&=a_n^\lambda\, \sum_{k=0}^{n}\frac{(-n)_{k}(n+\lambda)_{k}}
%{(\lambda+\frac{1}{2})_{k}k!} \frac{A_s^{k+\frac{\lambda+1}{2}}} {(1+x^2)^{s+k+\frac{\lambda+1}{2}}}\,
% {}_{2}F_{1}\Big(\!-s, s+k+\frac{\lambda+1}{2} ;\frac{1}{2};\frac{x^2}{1+x^2}\Big).
\end{split}
\end{equation}
so substituting \eqref{even} with $\gamma=k+\frac{\lambda+1}{2}$ into the above leads to \eqref{LapR2n}.

Similarly, we derive  from \eqref{Rnlambda2}  that
\begin{equation}\label{addA2}
\begin{split}
&(-\Delta)^s R_{2n+1}^\lambda(x)  =b_n^\lambda\,
\sum_{k=0}^{n}\frac{(-n)_{k}(n+\lambda+1)_{k}}
{(\lambda+\frac{1}{2})_{k}k!} (-\Delta)^s \bigg\{\frac{x}{\left(1+x^2\right)^{k+\frac{\lambda}{2}+1}}\bigg\},
%&= b_n^\lambda\, \sum_{k=0}^{n}\frac{(-n)_{k}(n+\lambda+1)_{k}}
%{(\lambda+\frac{1}{2})_{k}k!} \,
%\frac{(2s+1) A_s^{k+\frac{\lambda}{2}+1} x } {(1+x^2)^{s+k+\frac{\lambda}{2}+1}}\,
% {}_{2}F_{1}\Big(\!-s, s+k+\frac{\lambda}{2}+1;\frac{3}{2};\frac{x^2}{1+x^2}\Big).
\end{split}
\end{equation}
so substituting \eqref{odd} with $\gamma=k+\frac{\lambda}{2}+1$ into the above  leads to \eqref{LapR2n}.
\end{proof}
In view of  the asymptotic results  in Remark \ref{sint-rem},  we can analyse the decay rate of fractional Laplacian of the basis. Indeed, by  virtual of
\eqref{Dsdecay}-\eqref{neweqnBln}, we obtain from
  \eqref{addA1}-\eqref{addA2} that  (i) if $-1/2<\lambda<0,$ then
  \begin{equation}\label{lmda12}
   (-\Delta)^s R_{2n}^\lambda(x) \sim \frac{1} {(1+x^2)^{s+\frac{\lambda+1}2}};
   \end{equation}
(ii) if $\lambda=0,$  we have
  \begin{equation}\label{lmda13}
   (-\Delta)^s R_{2n}^\lambda(x) \sim \frac{\ln(1+x^2)} {(1+x^2)^{s+ 1/2}};
   \end{equation}
(iii) if $\lambda>0,$  we have
  \begin{equation}\label{lmda14}
   (-\Delta)^s R_{2n}^\lambda(x) \sim \frac{1} {(1+x^2)^{s+ 1/2}}.
   \end{equation}
   Similar results are available for $   (-\Delta)^s R_{2n+1}^\lambda(x).$ It is  noteworthy
   from \eqref{modifiedrational} and the above that the fractional Laplacian  on the basis does not always lead to the gain in decay rate of
   $1/(1+x^2)^s.$

\begin{remark} It is important to point out that the involved hypergeometric functions in \eqref{LapR2n} and  \eqref{LapR2np1} can be evaluated recursively by using \eqref{hyperrecur}.  Denote
$$
F_k(x)={}_2F_{1}(a,b;c;-x^2),\quad a=s+k+\frac{\lambda+1}{2},\;\; b=s+\frac{1}{2}, \;\; c=\frac{1}{2}.
$$
Then by \eqref{hyperrecur},  we have
\begin{equation}\label{F21recurr}
\begin{split}
F_{k+1}(x) &=\frac{c-a}{a(1+x^2)}\,F_{k-1}(x)+\frac{(2a-c)+(a-b)x^2} {a(1+x^2)} F_k(x),
  \end{split}
\end{equation}
for $k\ge 1.$   Similarly, we can efficiently compute the hyergeometric functions in \eqref{LapR2np1}.
\end{remark}

\begin{remark} To enhance the resolution of the basis, one can  also introduce  a scaling parameter $\mu>0$ (cf. \cite{SWT}). More precisely,  the algebraic mapping in \eqref{algebraicmapping} turns to
\begin{equation*}
x=\frac{\mu\,t}{\sqrt{1-t^2}},\quad t=\frac{x}{\sqrt{\mu^2+x^2}}.
\end{equation*}
The corresponding modified rational function can be defined as
\begin{equation*}
R_{n,\mu}^{\lambda}(x):=\frac{\mu^{\lambda+\frac{1}{2}}}{\left(\mu^2+x^2\right)^{\frac{\lambda+1}{2}}}C_{n}^{\lambda}\Big(\frac{x}{\sqrt{\mu^2+x^2}}\Big)=\mu^{-\frac{1}{2}}R_{n}^{\lambda}\Big(\frac{x}{\mu}\Big).
\end{equation*}
In fact, it is straightforward to extend the previous properties and formulas to the scaled basis. For simplicity, we omit the details.
%Comparing with \eqref{modifiedrational}, we have
%\begin{equation*}
%R_{n,\mu}^{\lambda}\left(x\right)=\mu^{-\frac{1}{2}}R_{n}^{\lambda}(\frac{x}{\mu}).
%\end{equation*}
%Using this scaling property, we can easily compute the fractional Laplacian of $R_{n,s}^{\lambda}\left(x\right)$. As is demonstrated in \cite{SWT}, the mapping parameter $s$ affects the distribution of the Gauss nodes, and thus the effective interval. For simplicity, here we omit further information for $s$.
\end{remark}

\section{Estimates of MMGF approximation in fractional Sobolev spaces}\label{sect:4}
In this section,  we analyse the approximation property by the modified rational basis functions in fractional Sobolev spaces.  We remark that there exist very limited results on the Legendre or Chebyshev rational approximations (see
\cite{modifiedchebyshev,modifiedlegendre,SWY14}). However, most of them are suboptimal.  Here, we derive the optimal estimates in more general settings.
\subsection{Fractional Sobolev spaces}
For real $r\ge 0,$ we define  the fractional Sobolev space (as in
\cite[P. 30]{Lions1972Book} and \cite[Ch. 1]{Agranovich2015Book}):
\begin{align}\label{Hssps}
H^r(\mathbb{R})=\Big\{u\in L^{2}\left(\mathbb{R}\right): \int_{\mathbb R} (1+\lvert\xi\rvert^{2})^r
\big|\mathscr{F}[u](\xi)\big|^{2}d\xi<+\infty\Big\},
\end{align}
equipped with the norm
\begin{align}\label{normHsp}
\big\lVert u\big\rVert_{H^r(\mathbb{R})}= \Big(\int_{\mathbb R} (1+\lvert\xi\rvert^{2})^r
\big|\mathscr{F}[u](\xi)\big|^{2}d\xi\Big)^{1/2}.
\end{align}
%\begin{remark}\label{newlemma}
%The space  $H^r\left(\mathbb{R}\right)$ can be equivalently defined by
%\begin{align}\label{HsspsA}
%H^r(\mathbb{R})=\big\{u\in L^{2}\left(\mathbb{R}\right) : (-\Delta)^{r/2} u\in L^2(\mathbb R)  \big\},
%\end{align}
%with the equivalent norm
%$$\|u\|_{H^r(\mathbb{R})}=\big(\|u\|_{L^2(\mathbb{R})}^2+\|  (-\Delta)^{r/2} u \|^2_{L^2(\mathbb{R})}\big)^{1/2}.$$
%\end{remark}

We have the following space interpolation property  (cf. \cite[Ch. 1]{Agranovich2015Book}).
\begin{lemma}\label{interpola}
For real $r_0,r_1\ge 0,$ let $r=(1-\theta) r_0+\theta r_1$ with $\theta \in[0,1]$. Then  for any $u\in H^{r_0}(\mathbb{R})\cap H^{r_1}(\mathbb{R}),$  we have
\begin{equation}\label{Hrinterp}
\|u\|_{H^{r}(\mathbb{R})}\le \|u\|^{1-\theta}_{H^{r_0}(\mathbb{R})} \, \|u\|^{\theta}_{H^{r_1}(\mathbb{R})},
\end{equation}
\end{lemma}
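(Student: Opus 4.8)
The plan is to prove the interpolation inequality \eqref{Hrinterp} directly from the Fourier-side definition of the norm \eqref{normHsp}, using H\"older's inequality with conjugate exponents $1/(1-\theta)$ and $1/\theta$. This is the standard approach for Sobolev spaces defined via the Fourier transform, and here it works cleanly because the weight $(1+|\xi|^2)^r$ factors multiplicatively in the exponent $r$.

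\medskip

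\textbf{Step 1: Split the weight.} Starting from $\|u\|_{H^r(\mathbb R)}^2 = \int_{\mathbb R}(1+|\xi|^2)^r |\mathscr F[u](\xi)|^2\,d\xi$, I would use $r=(1-\theta)r_0+\theta r_1$ to write the integrand as a product
\begin{equation*}
(1+|\xi|^2)^r|\mathscr F[u](\xi)|^2 = \Big[(1+|\xi|^2)^{r_0}|\mathscr F[u](\xi)|^2\Big]^{1-\theta}\,\Big[(1+|\xi|^2)^{r_1}|\mathscr F[u](\xi)|^2\Big]^{\theta},
\end{equation*}
where I have split $|\mathscr F[u]|^2 = (|\mathscr F[u]|^2)^{1-\theta}(|\mathscr F[u]|^2)^\theta$ as well. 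The cases $\theta=0$ and $\theta=1$ are trivial, so I assume $\theta\in(0,1)$.

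\medskip

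\textbf{Step 2: Apply H\"older's inequality.} With conjugate exponents $p=1/(1-\theta)$ and $q=1/\theta$ (so that $1/p+1/q=1$), H\"older's inequality gives
\begin{equation*}
\int_{\mathbb R}(1+|\xi|^2)^r|\mathscr F[u](\xi)|^2\,d\xi \le \Big(\int_{\mathbb R}(1+|\xi|^2)^{r_0}|\mathscr F[u](\xi)|^2\,d\xi\Big)^{1-\theta}\Big(\int_{\mathbb R}(1+|\xi|^2)^{r_1}|\mathscr F[u](\xi)|^2\,d\xi\Big)^{\theta}.
\end{equation*}
The right-hand side is exactly $\|u\|_{H^{r_0}(\mathbb R)}^{2(1-\theta)}\|u\|_{H^{r_1}(\mathbb R)}^{2\theta}$, and both factors are finite precisely because $u\in H^{r_0}(\mathbb R)\cap H^{r_1}(\mathbb R)$. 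Taking square roots yields \eqref{Hrinterp}. This also shows $u\in H^r(\mathbb R)$, so the statement is well-posed.

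\medskip

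\textbf{Main obstacle.} There is essentially no obstacle here — the proof is a one-line application of H\"older once the weight is split correctly. The only point requiring a modicum of care is the bookkeeping of exponents (ensuring the powers of $(1+|\xi|^2)$ and of $|\mathscr F[u]|^2$ are distributed so that H\"older applies with the right conjugate pair), and noting the degenerate endpoints $\theta\in\{0,1\}$ separately. Since Lemma \ref{interpola} is quoted from \cite[Ch.~1]{Agranovich2015Book}, one could alternatively just cite it; but the self-contained argument above is short enough to include.
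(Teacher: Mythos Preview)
Your proof is correct and follows essentially the same approach as the paper: split the weighted integrand using $r=(1-\theta)r_0+\theta r_1$ and apply H\"older's inequality with exponents $p=1/(1-\theta)$ and $q=1/\theta$. The paper's proof is virtually identical, differing only in that it does not separately mention the trivial endpoints $\theta\in\{0,1\}$.
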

\begin{proof} For  the readers' reference,  we sketch the derivation of this interpolation property.  It is clear that by \eqref{normHsp},
\begin{equation*}\begin{split}
\|u\|^2_{H^{r}(\mathbb{R})}&=\int_{\mathbb{R}}(1+|\xi|^2)^{(1-\theta)r_0+\theta r_1}|\mathscr{F}[u](\xi)|^2d\xi\\
&=\int_{\mathbb{R}}\Big\{(1+|\xi|^2)^{(1-\theta)r_0}|\mathscr{F}[u](\xi)|^{2(1-\theta)}\Big\}\Big\{(1+|\xi|^2)^{\theta r_1}|\mathscr{F}[u](\xi)|^{2\theta}\Big\}d\xi.
\end{split}\end{equation*}
Using the H\"{o}lder's inequality with $p=1/(1-\theta)$ and $q=1/\theta$, we obtain
\begin{equation*}\begin{split}
\|u\|^2_{H^{r}(\mathbb{R})}
&\le
\Big\{\int_{\mathbb{R}}(1+|\xi|^2)^{r_0}|\mathscr{F}[u](\xi)|^{2}d\xi\Big\}^{1-\theta}
\Big\{\int_{\mathbb{R}}(1+|\xi|^2)^{ r_1}|\mathscr{F}[u](\xi)|^{2}d\xi\Big\}^{\theta}\\
&=\|u\|^{2(1-\theta)}_{H^{r_0}(\mathbb{R})} \, \|u\|^{2\theta}_{H^{r_1}(\mathbb{R})}.
\end{split}\end{equation*}
This completes the proof.
\end{proof}

\subsection{Error estimate of orthogonal projections}

%The following relations find useful in the derivations:
%\begin{equation}\label{Bfun}
%  \int_{\mathbb R} |u(x)|^2dx%=\int_{-1}^1 |U(t)|^2 \frac{dx}{dt} dt
%   =\int_{-1}^1 \Big|\frac{U(t)}{S(t)}\Big|^2
%   (1-t^2)^{\lambda-1/2} dt=\int_{-1}^1 |\breve U(t)|^2 \omega_\lambda(t) dt,
%\end{equation}
%and
%\begin{equation}\label{Bfun2}
%\begin{split}
%  & \int_{\mathbb R} |u'(x)|^2dx%=\int_{-1}^1 |U(t)|^2 \frac{dx}{dt} dt
%     =\int_{-1}^1 \Big|\frac{dU(t)}{dt}\Big|^2
%   (1-t^2)^{3/2} dt=\int_{-1}^1 \Big|\frac{d(\breve U S) }{dt}\Big|^2
%   (1-t^2)^{3/2} dt\\
%   &\quad \le 2  \int_{-1}^1 \Big|\frac{d \breve U  }{dt}\Big|^2
%   (1-t^2)^{\lambda+5/2} dt +2(1+\lambda)^2 \int_{-1}^1 |\breve U(t)|^2 t^2 (1-t^2)^{\lambda+1/2} dt.
%   \end{split}
%\end{equation}
%As a direct consequence of \eqref{dxdt} and \eqref{Bfun2}, we have
%\begin{equation}\label{Bfun3}
%\begin{split}
%   \int_{\mathbb R} |u'(x)|^2 (1+x^2) dx \le   c \Big(\int_{-1}^1 \Big|\frac{d \breve U (t) }{dt}\Big|^2
%   \omega_{\lambda+2}(t) dt +\int_{-1}^1 |\breve U(t)|^2 \omega_\lambda(t) dt\Big).
%   \end{split}
%\end{equation}

Define the approximation space
\begin{equation}\label{VNlambda}
\begin{split}
V_{N}^{\lambda} & =\big\{\phi(x):\phi(x)=S(t)P(t),\;\; \forall  P\in {\mathcal P}_N \big\}\\&=\text{span}\big\{R_{n}^{\lambda}\left(x\right): n=0,1,...,N\big\}.
\end{split}
\end{equation}
% the bilinear form
%\begin{equation}\label{fbilinearA}
%a_s(u,v)=\big((-\Delta)^{s/2} u, (-\Delta)^{s/2} v\big)+(u, v).
%\end{equation}
%We first consider the orthogonal projection:  $\pi_{N,\lambda}^s: H^s(\mathbb R)\to V_N^\lambda$ such that
%\begin{equation}\label{bilinear}
%a_s(\pi_{N,\lambda}^s u-u, v\big)=0,\quad \forall\, v\in  V_N^\lambda.
%\end{equation}
%Then by the projection theorem, we have
%\begin{equation}\label{orthproj}
%\| \pi_{N,\lambda}^{s} u-u \|_{H^s(\mathbb R)}=\inf_{\phi\in V_N^\lambda} \|\phi-u \|_{H^s(\mathbb R)}.
%\end{equation}
%Thanks to Lemma \ref{interpola}, we infer that for any  $\phi\in  V_N^\lambda,$ we have
%\begin{equation}\label{orthNs}
% \| \pi_{N,\lambda}^{s} u-u \|_{H^s(\mathbb R)}\le\|\phi-u \|_{L^2(\mathbb R)}^{1-s}\,
%   \|\phi-u \|_{H^1(\mathbb R)}^{s}.
%\end{equation}
Consider the $L^2$-orthogonal projection $\pi_{N}^{\lambda}: L^2(\mathbb R)\to V_N^\lambda,$ i.e.,
\begin{equation}\label{phicase}
\pi_{N}^{\lambda} u(x)=\sum_{n=0}^N \hat u_n^\lambda R_n^\lambda(x),\quad\;\; \hat u_n^\lambda=\frac 1 {\gamma_n^\lambda}
\int_{\mathbb R} u(x) R_n^\lambda(x)dx.
\end{equation}
%As the first step, we improve the limited existing results on the related approximation.

For notational convenience, we introduce the pairs of functions associated with the mapping \eqref{algebraicmapping}:
\begin{equation}\label{pairfun}
\begin{split}
  &u(x)=U(t(x)),\quad \breve u(x)=\frac{u(x)}{s(x)}=\frac{U(t)}{S(t)}=\breve U(t),\;\;\; {\rm where} \\
  & s(x):=\frac{1} {(1+x^2)^{(\lambda+1)/2}}= (1-t^2)^{(\lambda+1)/2}:=S(t).
  \end{split}
\end{equation}
In what follows, the notation with or without ``\, $\breve{}$\, '' has the same meaning.

In order to describe the approximation errors, we introduce  new differential operators as follows
\begin{equation}\label{diffopts}
\begin{split}
  & {\mathscr D}_x u:=a(x)\frac{d\breve u}{dx}=\frac{d\breve U}{dt},\quad
  {\mathscr D}_x^2 u:=a(x) \frac{d}{dx}\Big\{a(x)\frac{d\breve u}{dx}\Big\}=\frac{d^2\breve U}{dt^2},\cdots, \\
   & {\mathscr D}_x^ku=a(x)\frac{d}{dx}\Big\{a(x)\frac {d}{dx}\Big\{\cdots\Big\{a(x)\frac{d\breve u}{dx}\Big\}\cdots\Big\}\Big\}=\frac{d^k\breve U}{dt^k},
\end{split}
\end{equation}
where  $ a(x)={dx}/{dt}=(1+x^2)^{\frac 3 2}.$
%\begin{equation}\label{Dxuxt}
% a(x)=\frac{dx}{dt}=(1+x^2)^{\frac 3 2},\quad \breve u(x)= \frac {u(x)}{s(x)}=(1+x^2)^{\frac{\lambda+1} 2}u(x).
%\end{equation}
Correspondingly, we define the Sobolev space
\begin{equation}\label{Sobolvm}
\begin{split}
  {\mathbb B}^m_{\lambda}(\mathbb R)=\big\{u: u \,\, \text{is measurable in $\mathbb R$ and}\, \, \|u\|_{{\mathbb B}^m_{\lambda}(\mathbb R)}<\infty\big\},
  \end{split}
\end{equation}
 equipped with the norm and semi-norm
\begin{equation}\label{seminorm}
\begin{split}
 & \|u\|_{{\mathbb B}^m_{\lambda}(\mathbb R)}=\Big(\sum_{k=0}^m\big\|(1+x^2)^{\frac 1 4-\frac{\lambda+m} 2}{\mathscr D}_x^ku \big\|_{L^2(\mathbb R)}^2\Big)^{\frac 1 2},\quad \\
 & |u|_{{\mathbb B}^m_{\lambda}(\mathbb R)}=\big\|(1+x^2)^{\frac 1 4-\frac{\lambda+m} 2}{\mathscr D}_x^m u \big\|_{L^2(\mathbb R)}
    \end{split}
\end{equation}

%Thanks to \eqref{orthNs} and  Lemma \ref{mainappres}, we can derive the following main result.
\begin{theorem}\label{maintheoremest} For any $u\in H^s(\mathbb R)\cap {\mathbb B}^m_{\lambda}(\mathbb R)$ with integer $ 1\le m\le N+1, s\in (0,1),$ and $\lambda>-1/2,$ we have
  \begin{equation}\label{orthNs2}
 \| \pi_{N}^{\lambda} u-u \|_{H^s(\mathbb R)}\le cN^{s-m} |u|_{{\mathbb B}^m_{\lambda}(\mathbb R)},
\end{equation}
where $c$ is a positive constant independent of $N$ and $u.$
\end{theorem}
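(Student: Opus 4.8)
The plan is to reduce the estimate in the physical variable $x$ to a one-dimensional Jacobi (Gegenbauer) polynomial approximation estimate in the reference variable $t\in I=(-1,1)$, and then to interpolate between the $L^2$ and $H^1$ cases to cover the fractional index $s\in(0,1)$. First I would pass to the reference interval: writing $u(x)=S(t)\breve U(t)$ with $S(t)=(1-t^2)^{(\lambda+1)/2}$ as in \eqref{modifiedrational2}, the orthogonality \eqref{orthss} shows that $\pi_N^\lambda u(x)=S(t)\,(\Pi_N^{\lambda}\breve U)(t)$, where $\Pi_N^\lambda$ is the $L^2_{\omega_\lambda}$-orthogonal projection onto ${\mathcal P}_N$ in the Gegenbauer-weighted inner product; this is immediate from \eqref{gegenor}, \eqref{modifiedrational2} and \eqref{phicase}. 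The operators ${\mathscr D}_x^k$ introduced in \eqref{diffopts} are designed precisely so that ${\mathscr D}_x^k u$ corresponds to $\partial_t^k\breve U$, and the weighted $L^2$-norms in \eqref{seminorm} translate, via the change of variables $dx=(1-t^2)^{-3/2}dt$ and $1+x^2=(1-t^2)^{-1}$, into the Jacobi-weighted norms $\|(1-t^2)^{(\lambda+k)/2-1/4}(1-t^2)^{-(\lambda+1)/2+\cdots}\partial_t^k\breve U\|$; bookkeeping the exponents should reveal that $\|u\|_{{\mathbb B}^m_\lambda}$ is (equivalent to) a non-uniformly-weighted Sobolev norm $\|\breve U\|_{B^m_{\lambda-1/2,\lambda-1/2}}$ in the standard notation for Jacobi approximation (e.g. Guo--Wang / Shen--Tang--Wang).

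Next I would invoke the classical optimal $L^2_{\omega_\lambda}$-projection error estimate for Gegenbauer polynomials in these non-uniformly weighted Sobolev spaces, which gives, for $0\le\mu\le m\le N+1$,
\begin{equation*}
\|\Pi_N^\lambda \breve U-\breve U\|_{B^\mu_{\omega_\lambda}}\le cN^{\mu-m}\,|\breve U|_{B^m_{\lambda-1/2,\lambda-1/2}},
\end{equation*}
for $\mu=0$ and $\mu=1$ in particular. Translating $\mu=0$ back to $x$ yields $\|\pi_N^\lambda u-u\|_{L^2(\mathbb R)}\le cN^{-m}|u|_{{\mathbb B}^m_\lambda}$, and translating $\mu=1$ — here one must check that $\partial_t(\Pi_N^\lambda\breve U-\breve U)$ on the reference side matches ${\mathscr D}_x(\pi_N^\lambda u-u)$ with the correct weight $(1+x^2)^{1/4-(\lambda+1)/2}$, again a change-of-variables computation — yields the $H^1$-type bound $\|\pi_N^\lambda u-u\|_{\star}\le cN^{1-m}|u|_{{\mathbb B}^m_\lambda}$, where $\|\cdot\|_\star$ is the weighted $H^1$ norm with weight determined by $S$. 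The remaining point is to relate $\|\cdot\|_\star$ to the true $H^1(\mathbb R)$ (hence $H^s(\mathbb R)$) norm: since $S(t)$ and its first derivative are bounded and $a(x)\,d/dx$ differs from $d/dx$ by the factor $a(x)=(1+x^2)^{3/2}$, one shows $\|v\|_{H^1(\mathbb R)}\le c(\|v\|_{L^2(\mathbb R)}+\|(1+x^2)^{1/4-(\lambda+1)/2}{\mathscr D}_x v\|_{L^2(\mathbb R)}+\text{lower order})$ for $v=\pi_N^\lambda u-u$, using that the extra polynomial weights only help (they decay). Finally, with the two endpoint estimates $\|\pi_N^\lambda u-u\|_{H^0}\le cN^{-m}|u|_{{\mathbb B}^m_\lambda}$ and $\|\pi_N^\lambda u-u\|_{H^1}\le cN^{1-m}|u|_{{\mathbb B}^m_\lambda}$ in hand, the interpolation inequality of Lemma \ref{interpola} with $\theta=s$, $r_0=0$, $r_1=1$ gives
\begin{equation*}
\|\pi_N^\lambda u-u\|_{H^s(\mathbb R)}\le \|\pi_N^\lambda u-u\|_{H^0(\mathbb R)}^{1-s}\|\pi_N^\lambda u-u\|_{H^1(\mathbb R)}^{s}\le cN^{(1-s)(-m)+s(1-m)}|u|_{{\mathbb B}^m_\lambda}=cN^{s-m}|u|_{{\mathbb B}^m_\lambda},
\end{equation*}
which is \eqref{orthNs2}.

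The main obstacle I anticipate is not the interpolation step, which is routine once the endpoints are established, but the careful verification that the $H^1(\mathbb R)$ norm of $\pi_N^\lambda u-u$ is genuinely controlled by the weighted quantity $N^{1-m}|u|_{{\mathbb B}^m_\lambda}$ — that is, tracking all the weight exponents through the mapping \eqref{algebraicmapping}, confirming that $\partial_x(S\breve U)=S'\,\dot t\,\breve U+S\,\dot t\,\partial_t\breve U$ produces only terms with favorable (decaying) polynomial weights, and ensuring the non-uniformly weighted Jacobi estimate applies with exactly the semi-norm $|u|_{{\mathbb B}^m_\lambda}$ appearing on the right-hand side with no loss. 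A secondary technical point is that the cited Jacobi projection estimate must be the \emph{optimal} one in non-uniformly weighted spaces (the paper explicitly notes that most existing rational approximation results are suboptimal), so one should either invoke a sharp version from the literature or re-derive it via the Sturm--Liouville structure of the Gegenbauer operator; I would state it as a lemma and reference Guo--Wang-type results, deferring its proof or sketching it in an appendix.
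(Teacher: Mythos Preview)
Your proposal is correct and follows essentially the same route as the paper: reduce $\pi_N^\lambda$ to the Gegenbauer $L^2_{\omega_\lambda}$-projection $\varPi_N^\lambda$ on $\breve U$ via the mapping, invoke the standard optimal Gegenbauer estimate $\|(\varPi_N^\lambda\breve U-\breve U)^{(l)}\|_{L^2_{\omega_{\lambda+l}}}\le cN^{l-m}\|\partial_t^m\breve U\|_{L^2_{\omega_{\lambda+m}}}$ for $l=0,1$, and then interpolate between $H^0$ and $H^1$ using Lemma~\ref{interpola}. The only place where the paper is slightly more direct than your sketch is the $H^1$ step: rather than working with ${\mathscr D}_x e_N$ and then comparing weighted norms, the paper simply differentiates $e_N(x)=S(t)\breve e_N(t)$ in $x$ by the chain rule to get $e_N'(x)=(1-t^2)^{\lambda/2+2}\breve e_N'(t)-(\lambda+1)t(1-t^2)^{\lambda/2+1}\breve e_N(t)$, and then bounds $\int_{\mathbb R}|e_N'(x)|^2(1+x^2)\,dx$ (which trivially dominates $\|e_N'\|_{L^2}^2$) by the two Gegenbauer-weighted norms of $\breve e_N$ and $\breve e_N'$---so the weight bookkeeping you flag as the main obstacle reduces to a two-line computation.
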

\begin{proof}   We take two steps to carry out the proof.

{\sc Step 1}:  We first prove that
\begin{equation}\label{mianResult}
\big\|\sqrt{1+x^2}\, (\pi_{N}^{\lambda} u-u)'\big\|_{L^2(\mathbb R)}+N\|\pi_{N}^{\lambda} u-u\|_{L^2(\mathbb R)}
\le cN^{1-m} |u|_{{\mathbb B}^m_{\lambda}(\mathbb R)},
\end{equation}
For this purpose, we  study the close relation between  $ \pi_{N}^{\lambda} $ and the orthogonal projection $\varPi_N^\lambda:  L^2_{\omega_\lambda}(I)\to {\mathcal P}_N,$ such that   for any $\varPhi\in  L^2_{\omega_\lambda}(I), $
\begin{equation}\label{Gen-proj}
\int_{-1}^1 ( \varPi_N^\lambda \varPhi(t)-\varPhi(t))\varPsi(t) \omega_\lambda(t)\, dt=0,\quad \forall\,\varPsi\in {\mathcal P}_N.
\end{equation}
Recall the Gegenbauer polynomial approximation result (cf. \cite[Thm 3.55]{SWT}):  {\em if $\varPhi^{(l)}(t)\in L^2_{\omega_{\lambda+l}}(I)$ for $0\le l\le m,$ we have
\begin{equation}\label{genpolyapp}
  \|(\varPi_N^\lambda \varPhi-\varPhi)^{(l)} \|_{L^2_{\omega_{\lambda+l}}(I)}\le c N^{l-m}\|\varPhi^{(m)}\|_{L^2_{\omega_{\lambda+m}}(I)},
\end{equation}
where the weight function  $\omega_a(t)=(1-t^2)^{a-1/2}.$}

From    \eqref{phicase} and \eqref{pairfun}, we find
\begin{equation}\label{hatunUn}
\begin{split}
\hat u_n & =\frac 1 {\gamma_n^\lambda}\int_{\mathbb R} u(x) R_n^\lambda(x)dx=
\frac 1 {\gamma_n^\lambda} \int_{-1}^1 U(t)S(t) C_n^\lambda(t) \frac{dx}{dt} dt
\\& =\frac 1 {\gamma_n^\lambda} \int_{-1}^1 \frac{U(t)}{S(t)} C_n^\lambda(t) (1-t^2)^{\lambda-1/2} dt
= \frac 1 {\gamma_n^\lambda} \int_{-1}^1 \breve{U}(t) C_n^\lambda(t)\omega_\lambda(t)dt=
\widehat {\breve U}_n.
\end{split}
\end{equation}
Therefore, we have
\begin{equation}\label{relatDP2}
\begin{split}
 e_N(x)& :=u(x)-\pi_N^{\lambda} u(x)=\sum_{n=N+1}^\infty \hat u_n R_n^\lambda(x)=S(t)\sum_{n=N+1}^\infty
\widehat {\breve U}_n C_n^\lambda(t)\\
&  =S(t)\big(\breve U(t)-\varPi_N^\lambda \breve U(t)\big):=S(t) \breve e_N(t).
\end{split}
\end{equation}
As a result, there holds
\begin{equation}\label{Bfun33}
  \int_{\mathbb R} |e_N(x)|^2dx=\int_{-1}^1 |S(t) \breve e_N(t)|^2 \frac{dx}{dt} dt
   =\int_{-1}^1 |\breve e_N(t)|^2 \omega_\lambda(t) dt.
\end{equation}
Thus, using \eqref{genpolyapp}  with $l=0$, we derive from  \eqref{diffopts}-\eqref{seminorm} that
\begin{equation}\label{relatDP33}
 \|e_N\|_{L^2(\mathbb R)}=\|\breve e_N\|_{L^2_{\omega_\lambda}(I)}\le
c N^{-m}\|\partial_t^m \breve U\|_{L^2_{\omega_{\lambda+m}}(I)}=cN^{-m}|u|_{{\mathbb B}^m_{\lambda}(\mathbb R)}.
\end{equation}

Like \eqref{relatDP2},  we can show
\begin{equation}\label{relatDP200}
\begin{split}
 e_N'(x)& =\sum_{n=N+1}^\infty \hat u_n \frac{dR_n^\lambda}{dx} (x)=\sum_{n=N+1}^\infty
\widehat {\breve U}_n \frac{d}{dt} \big(S(t) C_n^\lambda(t)\big) \frac{dt}{dx}\\
&  =\big(S(t) \breve e_N'(t)
+S'(t) \breve e_N(t)\big) \frac{dt}{dx}\\
&= (1-t^2)^{\frac \lambda 2+2}  \breve e_N'(t)-(\lambda+1) t (1-t^2)^{\frac{\lambda} 2+1}   \breve e_N(t).
\end{split}
\end{equation}
Similar to \eqref{Bfun33}, we derive from   \eqref{genpolyapp}  with $l=0,1$ that
\begin{equation*}\label{Bfun44}
\begin{split}
  \int_{\mathbb R} |e_N'(x)|^2 (1+x^2) dx & \le 2
  \int_{-1}^1 |\breve e_N'(t)|^2 \omega_{\lambda+2}(t) dt
  + 2(1+\lambda)^2 \int_{-1}^1 |\breve e_N(t)|^2 \omega_{\lambda}(t) dt\\
  &\le c N^{2-2m}  \|\partial_t^m \breve U\|_{L^2_{\omega_{\lambda+m}}(I)}^2=c N^{2-2m} |u|_{{\mathbb B}^m_{\lambda}(\mathbb R)}^2.
   \end{split}
\end{equation*}
Then the estimate \eqref{mianResult} is a direct consequence of \eqref{relatDP33} and the above.

\smallskip

 {\sc Step 2}:  It is evident that the result \eqref{mianResult} implies
\begin{equation}\label{mianResult2}
\begin{split}
\|\pi_{N}^{\lambda} u-u\|_{H^1(\mathbb R)} & \le  \big\|\sqrt{1+x^2}\, (\pi_{N}^{\lambda} u-u)'\big\|_{L^2(\mathbb R)}+\|\pi_{N}^{\lambda} u-u\|_{L^2(\mathbb R)}\\
& \le cN^{1-m} |u|_{{\mathbb B}^m_{\lambda}(\mathbb R)},
\end{split}
\end{equation}
and
\begin{equation}\label{L2est}
\|\pi_{N}^{\lambda} u-u\|_{L^2(\mathbb R)}
\le cN^{-m} |u|_{{\mathbb B}^m_{\lambda}(\mathbb R)}.
\end{equation}
Then using the interpolation inequality in Lemma \ref{interpola} with $r_0=0,r_1=1$ and $\theta=s,$ we obtain from \eqref{mianResult2}-\eqref{L2est} that
\begin{equation}\label{orthNs22}
 \| \pi_{N}^{\lambda} u-u \|_{H^s(\mathbb R)}\le\|\pi_{N}^\lambda u-u \|_{L^2(\mathbb R)}^{1-s}\,
   \|\pi_{N}^\lambda u-u \|_{H^1(\mathbb R)}^{s}\le cN^{s-m} |u|_{{\mathbb B}^m_{\lambda}(\mathbb R)}.
\end{equation}
This completes the proof.
\end{proof}

 In the error analysis, it is necessary to consider the $H^s$-orthogonal projection.
 Define the bilinear form on $H^s({\mathbb R}):$
\begin{equation}\label{fbilinearA}
a_s(u,v)=\big((-\Delta)^{s/2} u, (-\Delta)^{s/2} v\big)+(u, v).
\end{equation}
Consider the orthogonal projection  $\pi_{N,\lambda}^s: H^s(\mathbb R)\to V_N^\lambda$ such that
\begin{equation}\label{bilinear}
a_s(\pi_{N,\lambda}^s u-u, v\big)=0,\quad \forall\, v\in  V_N^\lambda.
\end{equation}
Then by the projection theorem, we have
\begin{equation}\label{orthproj}
\| \pi_{N,\lambda}^{s} u-u \|_{H^s(\mathbb R)}=\inf_{\phi\in V_N^\lambda} \|\phi-u \|_{H^s(\mathbb R)}.
\end{equation}
Taking $\phi=\pi_N^\lambda u,$ we immediately derive the following estimate.
\begin{theorem}\label{maintheoremest2} For any $u\in H^s(\mathbb R)\cap {\mathbb B}^m_{\lambda}(\mathbb R)$ with integer $ 1\le m\le N+1, s\in (0,1),$ and $\lambda>-1/2,$ we have
  \begin{equation}\label{orthNs23}
 \| \pi_{N,\lambda}^{s} u-u \|_{H^s(\mathbb R)}\le cN^{s-m} |u|_{{\mathbb B}^m_{\lambda}(\mathbb R)},
\end{equation}
where $c$ is a positive constant independent of $N$ and $u.$
\end{theorem}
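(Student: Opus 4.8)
The plan is to obtain Theorem~\ref{maintheoremest2} as an immediate corollary of Theorem~\ref{maintheoremest} by exploiting the variational characterization of the $H^s$-orthogonal projection. The key observation is that the bilinear form $a_s(\cdot,\cdot)$ defined in~\eqref{fbilinearA} is precisely the inner product whose induced norm is $\|\cdot\|_{H^s(\mathbb R)}$; indeed, by Plancherel's theorem and the Fourier definition~\eqref{viafouriertransform} of the fractional Laplacian,
\begin{equation*}
a_s(u,u)=\int_{\mathbb R}|\xi|^{s}\big|\mathscr F[u](\xi)\big|^2\,|\xi|^{s}\,d\xi+\int_{\mathbb R}\big|\mathscr F[u](\xi)\big|^2\,d\xi=\int_{\mathbb R}(1+|\xi|^{2s})\big|\mathscr F[u](\xi)\big|^2\,d\xi,
\end{equation*}
which is equivalent to $\|u\|_{H^s(\mathbb R)}^2$ as defined in~\eqref{normHsp} since $1+|\xi|^{2s}$ and $(1+|\xi|^2)^s$ are comparable up to absolute constants for $s\in(0,1)$. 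Hence $a_s$ is a continuous, coercive, symmetric bilinear form on $H^s(\mathbb R)$.

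First I would record that, by the Lax--Milgram / projection theorem on the closed subspace $V_N^\lambda\subset H^s(\mathbb R)$, the projection $\pi_{N,\lambda}^s u$ defined by~\eqref{bilinear} is the best approximation to $u$ from $V_N^\lambda$ in the $\|\cdot\|_{H^s(\mathbb R)}$-norm, i.e.\ identity~\eqref{orthproj} holds. This is exactly the content already asserted in the excerpt just above the statement, so it may simply be invoked. Second, since $\pi_N^\lambda u\in V_N^\lambda$ by~\eqref{phicase} and~\eqref{VNlambda}, I would bound the infimum in~\eqref{orthproj} by the particular choice $\phi=\pi_N^\lambda u$:
\begin{equation*}
\|\pi_{N,\lambda}^s u-u\|_{H^s(\mathbb R)}=\inf_{\phi\in V_N^\lambda}\|\phi-u\|_{H^s(\mathbb R)}\le\|\pi_N^\lambda u-u\|_{H^s(\mathbb R)}.
\end{equation*}
Third, I would apply Theorem~\ref{maintheoremest} directly to the right-hand side, which for $u\in H^s(\mathbb R)\cap{\mathbb B}^m_\lambda(\mathbb R)$ with $1\le m\le N+1$, $s\in(0,1)$, $\lambda>-1/2$ gives $\|\pi_N^\lambda u-u\|_{H^s(\mathbb R)}\le cN^{s-m}|u|_{{\mathbb B}^m_\lambda(\mathbb R)}$. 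Chaining these three inequalities yields~\eqref{orthNs23} with the same constant $c$, completing the proof.

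There is essentially no hard step here: the entire content of the theorem has been pushed into Theorem~\ref{maintheoremest} and the projection theorem, and the present statement is a packaging result. The only point requiring a modicum of care is the equivalence $a_s(u,u)\asymp\|u\|_{H^s(\mathbb R)}^2$ needed to know that $V_N^\lambda$ is a closed subspace of $\big(H^s(\mathbb R),\|\cdot\|_{H^s(\mathbb R)}\big)$ and that $a_s$ is coercive there, so that~\eqref{orthproj} is legitimate; this is the elementary two-sided bound $\tfrac12(1+|\xi|^2)^s\le 1+|\xi|^{2s}\le 2(1+|\xi|^2)^s$ for $s\in(0,1)$ combined with Plancherel. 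Since $V_N^\lambda$ is finite-dimensional it is automatically closed, so even this is not a genuine obstacle. I would therefore expect the proof to be only a few lines long, as indicated by the phrase ``we immediately derive'' in the excerpt.
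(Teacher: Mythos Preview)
Your proposal is correct and follows exactly the paper's own argument: invoke the best-approximation property \eqref{orthproj} of the $H^s$-orthogonal projection, choose $\phi=\pi_N^\lambda u$, and apply Theorem~\ref{maintheoremest}. Your added remark on the norm equivalence $a_s(u,u)\asymp\|u\|_{H^s(\mathbb R)}^2$ is a welcome clarification the paper leaves implicit, but otherwise the approaches are identical.
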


%
%
%Thanks to Lemma \ref{interpola}, we infer that for any  $\phi\in  V_N^\lambda,$ we have
%\begin{equation}\label{orthNs}
% \| \pi_{N,\lambda}^{s} u-u \|_{H^s(\mathbb R)}\le\|\phi-u \|_{L^2(\mathbb R)}^{1-s}\,
%   \|\phi-u \|_{H^1(\mathbb R)}^{s}.
%\end{equation}

%We recap on the Gegenbauer polynomial approximation and consider the orthogonal projection $\varPi_N^\lambda: L^2_{\omega_\lambda}(I)\to {\mathcal P}_N,$ such that for $\lambda>-1/2.$
%
%
%
%We recall the Gegenbauer polynomial approximation (see \cite{ShenTangWang2011}):
%\begin{lemma}\label{Genbauer}  If $v^{(k)}\in L^2_{\omega_{\lambda+k}}(I)$ for $0\le k\le m\le N+1,$ we have
%where $c$ is a positive constant independent $N$ and $v.$
%\end{lemma}
%
%Note that we can characterise the dependence of $m$ with more explicit information.
%
%%Now, we are ready to introduce the main results.
%\begin{lemma}\label{mainappres} For any $u\in {\mathbb B}^m_{\lambda}(\mathbb R)$ with integer $ 1\le m\le N+1,$
% we have
%where $c$ is a positive constant independent of $u$ and $N.$
%\end{lemma}
%\begin{proof}
%
%
%This ends the proof.
%\end{proof}
%\begin{remark}\label{impremark}
%\end{remark}

\subsection{Error estimate of  interpolation} Let  $\{t_j^\lambda,\rho_j^\lambda\}_{j=0}^N$ be the Gegenbauer-Gauss quadrature
nodes and weights,   where  $\{t_j^\lambda\}$  are zeros of the Gegenbauer polynomial $C_{N+1}^{\lambda}(t).$
Define the mapped nodes and weights:
\begin{equation}\label{mappedxj}
 x_j^\lambda=\frac{t_j^\lambda}{\sqrt{1-(t_j^\lambda)^2}},\quad \omega_j^\lambda= (1+(t_j^\lambda)^2)^{-\lambda} \rho_j^\lambda,\quad 0\le j\le N.
\end{equation}
Then by the exactness of the Gagenbauer-Gauss quadrature (cf. \cite[Ch 3]{SWT}), we have
\begin{equation*}\label{guassmappedjacobi}
\begin{split}
\int_{\mathbb R}u(x) v(x)\, dx & =\int_{-1}^1 \frac{U(t) V(t)}{(1-t^2)^{3/2}}  dt=\int_{-1}^1 \frac{U(t)}{S(t)} \frac{U(t)}{S(t)}  (1-t^2)^{\lambda-1/2} dt   \\&
=\sum_{j=0}^{N}\frac{U(t_{j}^\lambda)} {S(t_j^\lambda)}  \frac{V(t_{j}^\lambda)} {S(t_j^\lambda)}  \rho_{j}^\lambda,\qquad {\rm if}\quad  \frac{U(t)}{S(t)} \cdot  \frac{V(t)}{S(t)}  \in {\mathcal  P}_{2N+1},
\end{split}
\end{equation*}
which, together with \eqref{VNlambda},  implies the exactness of quadrature
\begin{equation}\label{gujacobi}
\begin{split}
\int_{\mathbb R}u(x) v(x)\, dx =\sum_{j=0}^N u(x_j^\lambda)v(x_j^\lambda) \omega_j^\lambda,\quad \forall\, u\cdot v\in V_{2N+1}^\lambda.
\end{split}
\end{equation}

We now introduce  the  interpolation operator  $I_{N}^{\lambda}u\,:\, C(\mathbb R)\to  V_{N}^{\lambda} $ such that
\begin{equation*}
 I_{N}^{\lambda}u(x_{j}^\lambda)=u(x_{j}^\lambda), \quad 0\le j\le N.
\end{equation*}
As a consequence of  \eqref{orthss} and \eqref{gujacobi}, we have
\begin{equation}\label{INlambda}
I_{N}^{\lambda}u(x)=\sum_{n=0}^{N}\tilde{u}_{n}^{\lambda}R_{n}^{\lambda}(x), \quad {\rm where}\quad
\tilde{u}_{n}^{\lambda}=\frac{1}{\gamma_{n}^{\lambda}}\sum_{j=0}^{N}u(x_{j}^\lambda)R_{n}^{\lambda}(x_j^\lambda)\omega_{j}^\lambda.
\end{equation}

We have the following interpolation approximation result.
\begin{theorem}\label{intpol-thm} For any $u\in H^s(\mathbb R)\cap {\mathbb B}^m_{\lambda}(\mathbb R)$ with integer $ 1\le m\le N+1, s\in (0,1),$ and $\lambda>-1/2,$ we have
  \begin{equation}\label{Intesta}
 \| I_{N}^{\lambda} u-u \|_{H^s(\mathbb R)}\le cN^{s-m} |u|_{{\mathbb B}^m_{\lambda}(\mathbb R)},
\end{equation}
where $c$ is a positive constant independent of $N$ and $u.$
\end{theorem}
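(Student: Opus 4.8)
The plan is to transfer the interpolation error to the Gegenbauer–Gauss interpolation on $I$ via the mapping \eqref{algebraicmapping}, exactly in the spirit of the proof of Theorem \ref{maintheoremest}. First I would observe that, because of the exactness of the mapped quadrature \eqref{gujacobi} and the identity \eqref{INlambda}, the mapped interpolant factors as $I_N^\lambda u(x)=S(t)\,\breve I_N^\lambda\breve U(t)$, where $\breve I_N^\lambda$ is the classical Gegenbauer–Gauss interpolation operator on $I$ associated with the nodes $\{t_j^\lambda\}$ and the weight $\omega_\lambda$. Indeed, writing $\tilde u_n^\lambda$ in \eqref{INlambda} in terms of $\breve U$ using $R_n^\lambda(x_j^\lambda)=S(t_j^\lambda)C_n^\lambda(t_j^\lambda)$, $\omega_j^\lambda=(1+(t_j^\lambda)^2)^{-\lambda}\rho_j^\lambda$ and $S(t_j^\lambda)^2=(1+(t_j^\lambda)^2)^{-(\lambda+1)}$ shows $\tilde u_n^\lambda=\widehat{(\breve I_N^\lambda\breve U)}_n$, so $I_N^\lambda u-u=S(t)\,(\breve I_N^\lambda\breve U-\breve U)$. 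This is the analogue of \eqref{relatDP2}, now with the interpolation error in place of the truncation error.

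Next I would reduce to estimating the Gegenbauer interpolation error in $L^2_{\omega_\lambda}(I)$ and in the weighted $H^1$-type norm appearing in Step~1 of Theorem~\ref{maintheoremest}. For the $L^2$-estimate, exactly as in \eqref{Bfun33}, $\|I_N^\lambda u-u\|_{L^2(\mathbb R)}=\|\breve I_N^\lambda\breve U-\breve U\|_{L^2_{\omega_\lambda}(I)}$, and I would invoke the standard Gegenbauer–Gauss interpolation estimate (the analogue of \eqref{genpolyapp}, e.g.\ \cite[Ch.\ 3]{SWT})
\begin{equation*}
\|(\breve I_N^\lambda\breve U-\breve U)^{(l)}\|_{L^2_{\omega_{\lambda+l}}(I)}\le cN^{l-m}\|\breve U^{(m)}\|_{L^2_{\omega_{\lambda+m}}(I)},\qquad l=0,1,
\end{equation*}
valid for $\breve U^{(k)}\in L^2_{\omega_{\lambda+k}}(I)$, $0\le k\le m$. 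Combining $l=0$ with the computation of $(I_N^\lambda u-u)'$ in terms of $\breve e_N:=\breve I_N^\lambda\breve U-\breve U$ and $\breve e_N'$ — identical to \eqref{relatDP200} with $\breve e_N$ now the interpolation error — and using $l=0,1$ as in the displayed bound right after \eqref{relatDP200}, I get
\begin{equation*}
\big\|\sqrt{1+x^2}\,(I_N^\lambda u-u)'\big\|_{L^2(\mathbb R)}+N\|I_N^\lambda u-u\|_{L^2(\mathbb R)}\le cN^{1-m}|u|_{{\mathbb B}^m_\lambda(\mathbb R)},
\end{equation*}
where I have used $\|\breve U^{(m)}\|_{L^2_{\omega_{\lambda+m}}(I)}=\|(1+x^2)^{\frac14-\frac{\lambda+m}{2}}\mathscr D_x^m u\|_{L^2(\mathbb R)}=|u|_{{\mathbb B}^m_\lambda(\mathbb R)}$ from \eqref{diffopts}–\eqref{seminorm} and $\mathscr D_x^m u=\partial_t^m\breve U$. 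This gives the $H^1$ and $L^2$ bounds, and then Step~2 of Theorem~\ref{maintheoremest} applies verbatim: the space-interpolation inequality of Lemma~\ref{interpola} with $r_0=0$, $r_1=1$, $\theta=s$ yields $\|I_N^\lambda u-u\|_{H^s(\mathbb R)}\le cN^{s-m}|u|_{{\mathbb B}^m_\lambda(\mathbb R)}$.

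The main obstacle, and the only place where some care is genuinely needed, is justifying that $u\in H^s(\mathbb R)\cap{\mathbb B}^m_\lambda(\mathbb R)$ suffices for $I_N^\lambda u$ to make sense (pointwise values) and for the Gegenbauer interpolation estimate to apply — that is, controlling $\|\breve U^{(m)}\|_{L^2_{\omega_{\lambda+m}}(I)}$ and the lower-order terms by $|u|_{{\mathbb B}^m_\lambda(\mathbb R)}$, and ensuring $\breve U$ is continuous on $\bar I$ so that $\breve I_N^\lambda\breve U$ is well defined. Membership in ${\mathbb B}^m_\lambda(\mathbb R)$ with $m\ge 1$ already encodes $\breve U'\in L^2_{\omega_{\lambda+1}}(I)$, which together with an embedding of the weighted Sobolev space into $C(\bar I)$ (standard for Gegenbauer-weighted spaces, cf.\ \cite[Ch.\ 3]{SWT}) gives the required continuity; the $H^s$-regularity is used only, as in the previous theorems, to make the $H^s$-norm on the left-hand side finite and to legitimize the interpolation step. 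I would state these as the hypotheses already built into ${\mathbb B}^m_\lambda(\mathbb R)$ and refer to \eqref{diffopts}–\eqref{seminorm} for the norm identifications, so that the proof is a short reduction rather than a new computation.
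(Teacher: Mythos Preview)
Your proposal is correct and follows essentially the same route as the paper: you transfer $I_N^\lambda$ to the Gegenbauer--Gauss interpolant $I_N^G$ via $I_N^\lambda u(x)=S(t)\,I_N^G\breve U(t)$, invoke the standard Gegenbauer--Gauss interpolation estimate from \cite[Ch.~3]{SWT}, and then apply the space-interpolation inequality of Lemma~\ref{interpola}. The only cosmetic difference is that you carry the stronger weighted bound $\|\sqrt{1+x^2}\,(I_N^\lambda u-u)'\|_{L^2}$ (mirroring Step~1 of Theorem~\ref{maintheoremest}), whereas the paper works directly with the unweighted $\|(I_N^\lambda u-u)'\|_{L^2}$; either suffices.
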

\begin{proof}
Recall the Gegenbauer-Gauss interpolation % associated with  $\{t_j^\lambda,\rho_j^\lambda\}_{j=0}^N,$ defined by
  $I_N^G: C(-1,1)\to {\mathcal P}_N,$ such that
\begin{equation*}
 I_{N}^{G} U(t_{j}^\lambda)=U(t_{j}^\lambda),  \quad 0\le j\le N.
\end{equation*}
Then we have the expansion
\begin{equation}\label{IGlambda}
I_{N}^{G}U(t)=\sum_{n=0}^{N}\tilde{U}_{n}^{\lambda}C_{n}^{\lambda}(t), \quad {\rm where}\quad
\tilde{U}_{n}^{\lambda}=\frac{1}{\gamma_{n}^{\lambda}}\sum_{j=0}^{N}U(t_{j}^\lambda)C_{n}^{\lambda}(t_j^\lambda)\rho_{j}^\lambda.
\end{equation}
One verifies from \eqref{modifiedrational}, \eqref{pairfun},  \eqref{mappedxj} and \eqref{INlambda}-\eqref{IGlambda} that
\begin{equation}\label{interela}
I_{N}^{\lambda}u(x)=S(t)\, I_{N}^{G}\Big\{\frac{U(t)}{S(t)}\Big\}=S(t)\, I_{N}^{G} \breve U(t).
\end{equation}
Thus,
\begin{equation}\label{relatDP2s}
\begin{split}
 e_N(x)& :=u(x)-I_N^{\lambda} u(x)=  S(t)\big(\breve U(t)-I_N^G \breve U(t)\big):=S(t) \breve e_N(t),
\end{split}
\end{equation}
where with a little abuse of notation, we still use  the same notation as in \eqref{relatDP2}.
Following the lines as in \eqref{Bfun33}-\eqref{Bfun44}, we can show that
\begin{equation}\label{estA1}
\begin{split}
 & \| I_{N}^{\lambda} u-u\|_{L^2(\mathbb R)}=\|I_N^G \breve U-\breve U\|_{L^2_{\omega_\lambda}(I)},
\end{split}
\end{equation}
and
\begin{equation}\label{estA2}
\begin{split}
 &  \| (I_{N}^{\lambda} u-u)'\|_{L^2(\mathbb R)}\le c\big(\|I_N^G \breve U-\breve U\|_{L^2_{\omega_\lambda}(I)} + \| \sqrt{1-t^2} (I_N^G \breve U-\breve U)'\|_{L^2_{\omega_\lambda}(I)} \big).
\end{split}
\end{equation}
According to \cite[Thm. 3.41]{SWT} on the Gegenbauer-Gauss interpolation error estimate, we have
\begin{equation*}\label{estA3}
\begin{split}
N \|I_N^G \breve U-\breve U\|_{L^2_{\omega_\lambda}(I)}  & + \| \sqrt{1-t^2} (I_N^G \breve U-\breve U)'\|_{L^2_{\omega_\lambda}(I)}
\\& \le  c N^{1-m}  \|\partial_t^m \breve U\|_{L^2_{\omega_{\lambda+m}}(I)}.
\end{split}
\end{equation*}
Then by the interpolation inequality in Lemma \ref{interpola}, we obtain from the above that
\begin{equation*}\label{orthNs20}
 \| I_{N}^{\lambda} u-u \|_{H^s(\mathbb R)}\le\|I_{N}^{\lambda} u-u \|_{L^2(\mathbb R)}^{1-s}\,
   \|I_{N}^{\lambda} u-u \|_{H^1(\mathbb R)}^{s}\le cN^{s-m} |u|_{{\mathbb B}^m_{\lambda}(\mathbb R)}.
\end{equation*}
This completes the proof.
\end{proof}

\section{Modified rational spectral-Galerkin methods}\label{sect:5}

In this section, we consider the spectral-Galerkin approximation to a model equation, and conduct the error analysis.
We also present some numerical results to show our proposed method outperforms the Hermite approximations in
\cite{MaoShen,hermitecollocation}.

\subsection{The scheme and its convergence} Consider the model equation
\begin{equation}\label{modeqn}
\begin{cases}
(-\Delta)^{\alpha/2}u(x) +\rho u(x) = f(x), \quad &x\in\mathbb{R},\\
  u(x)=0,\quad &|x| \to \infty,
  \end{cases}
\end{equation}
for $\alpha\in (0,2),$ where $f\in L^2(\mathbb R)$ and the constant $\rho>0.$

For notational convenience, let $s=\alpha/2.$  A weak form of \eqref{modeqn} is to find $u\in H^s(\mathbb R)$ such that
\begin{equation}\label{wkformA}
 \tilde a_s(u,v):=\big((-\Delta)^{s/2} u, (-\Delta)^{s/2} v\big)+\rho (u, v)
 =(f,v),\quad \forall v\in H^s(\mathbb R).
\end{equation}
The spectral-Galerkin scheme is to find $u_N\in V_N^\lambda$ (defined in \eqref{VNlambda}) such that
\begin{equation}\label{schemeA}
 \tilde a_s(u_N,v_N)
 =(I_N^\lambda f,v_N),\quad \forall v_N\in V_N^\lambda.
\end{equation}

Denote $e_N=u_N-\pi_{N,\lambda}^s u$ and $\tilde e_N=u-\pi_{N,\lambda}^s u.$  By a standard analysis, we find that for any $v_N\in V_N^\lambda,$
\begin{equation*}
 \begin{split}
 \tilde a_s(e_N,v_N)
 &=\tilde a_s(\tilde e_N,v_N) +  (I_N^\lambda f-f,v_N) \\
 &= a_s(\tilde e_N,v_N) +(\rho-1)(\tilde e_N,v_N)  +  (I_N^\lambda f-f,v_N) \\
 &= (\rho-1)(\tilde e_N,v_N)  +  (I_N^\lambda f-f,v_N).
 \end{split}
\end{equation*}
Taking $v_N=e_N$ and using the Cauchy-Schwarz inequality, we obtain
\begin{equation*}
\|e_N\|_{H^s(\mathbb R)}^2\le c \big(\|\tilde e_N\|_{L^2(\mathbb R)}^2+ \|I_N^\lambda f-f\|_{L^2(\mathbb R)}^2\big).
\end{equation*}
Thus, by the triangle inequality, we derive
\begin{equation}\label{vNeN}
\begin{split}
\|u-u_N\|_{H^s(\mathbb R)} & \le c \big(\|\tilde e_N\|_{H^s(\mathbb R)}+ \|I_N^\lambda f-f\|_{L^2(\mathbb R)}\big)\\
&\le  cN^{s-m} |u|_{{\mathbb B}^m_{\lambda}(\mathbb R)}+ cN^{-k} |f|_{{\mathbb B}^{k}_{\lambda}(\mathbb R)}.
\end{split}
\end{equation}
In summary, we have the following convergence result.
\begin{theorem}\label{Conv-thm} For any $u\in H^s(\mathbb R)\cap {\mathbb B}^m_{\lambda}(\mathbb R)$ and $f\in {\mathbb B}^k_{\lambda}(\mathbb R) $ with integer $ 1\le m, k\le N+1, s=\alpha/2\in (0,1),$ and $\lambda>-1/2,$ we have
  \begin{equation}\label{IntestaS}
 \| u-u_N\|_{H^s(\mathbb R)}\le cN^{s-m} |u|_{{\mathbb B}^m_{\lambda}(\mathbb R)}+ cN^{-k} |f|_{{\mathbb B}^{k}_{\lambda}(\mathbb R)},
\end{equation}
where $c$ is a positive constant independent of $N$ and $u,f.$
\end{theorem}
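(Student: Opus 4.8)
The plan is to run a Strang-type (C\'ea-plus-consistency) argument: compare the discrete solution $u_N$ with the $H^s$-orthogonal projection $\pi_{N,\lambda}^s u$ from \eqref{bilinear}, so that the projection part of the error is already controlled by Theorem \ref{maintheoremest2}, and the only genuinely new contribution is the consistency error produced by replacing the data $f$ by its interpolant $I_N^\lambda f$ on the right-hand side of \eqref{schemeA}. Before doing this I would note that everything is well posed: since $\rho>0$, the form $\tilde a_s$ satisfies $\tilde a_s(v,v)=\|(-\Delta)^{s/2}v\|_{L^2(\mathbb R)}^2+\rho\|v\|_{L^2(\mathbb R)}^2\gtrsim\|v\|_{H^s(\mathbb R)}^2$ by \eqref{normHsp}, and it is clearly bounded on $H^s(\mathbb R)$, so both \eqref{wkformA} and \eqref{schemeA} have unique solutions; moreover $k\ge 1$ forces $f\in{\mathbb B}^k_\lambda(\mathbb R)\subset C(\mathbb R)$, so $I_N^\lambda f$ and the scheme \eqref{schemeA} are meaningful.

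Next I would set $e_N=u_N-\pi_{N,\lambda}^s u$ and $\tilde e_N=u-\pi_{N,\lambda}^s u$, and subtract \eqref{wkformA} (tested against $v_N\in V_N^\lambda$) from \eqref{schemeA} to obtain $\tilde a_s(e_N,v_N)=\tilde a_s(\tilde e_N,v_N)+(I_N^\lambda f-f,v_N)$ for all $v_N\in V_N^\lambda$. The key point is the cancellation coming from the defining property \eqref{bilinear} of $\pi_{N,\lambda}^s$: since $\tilde a_s(\cdot,\cdot)=a_s(\cdot,\cdot)+(\rho-1)(\cdot,\cdot)$ and $a_s(\tilde e_N,v_N)=0$, the first term on the right collapses to $(\rho-1)(\tilde e_N,v_N)$. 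Taking $v_N=e_N$, using coercivity on the left and the Cauchy--Schwarz inequality on the right, and absorbing the $\|e_N\|$ factor, gives $\|e_N\|_{H^s(\mathbb R)}\le c\big(\|\tilde e_N\|_{L^2(\mathbb R)}+\|I_N^\lambda f-f\|_{L^2(\mathbb R)}\big)$, which is precisely the chain leading to \eqref{vNeN}.

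Finally I would apply the triangle inequality $\|u-u_N\|_{H^s(\mathbb R)}\le\|\tilde e_N\|_{H^s(\mathbb R)}+\|e_N\|_{H^s(\mathbb R)}$, bound $\|\tilde e_N\|_{H^s(\mathbb R)}$ and $\|\tilde e_N\|_{L^2(\mathbb R)}$ by $cN^{s-m}|u|_{{\mathbb B}^m_\lambda(\mathbb R)}$ using Theorem \ref{maintheoremest2}, and bound $\|I_N^\lambda f-f\|_{L^2(\mathbb R)}$ by $cN^{-k}|f|_{{\mathbb B}^k_\lambda(\mathbb R)}$ using the $L^2$ (that is, $s=0$) version of the interpolation estimate --- equivalently \eqref{estA1} together with the cited Gegenbauer--Gauss interpolation bound, which applies because $m,k\le N+1$. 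Assembling these estimates yields \eqref{IntestaS}.

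I do not expect a genuinely hard step here: the substance is entirely in the approximation results of Theorems \ref{maintheoremest2} and \ref{intpol-thm}. The only points needing a little care are the bookkeeping of the orthogonality \eqref{bilinear}, which annihilates $a_s(\tilde e_N,v_N)$ but not the full $\tilde a_s(\tilde e_N,v_N)$, leaving the harmless term $(\rho-1)(\tilde e_N,v_N)$, and the observation that, although $\tilde e_N$ enters the final bound in the $H^s$ norm through the triangle inequality, inside the residual it suffices to measure it in the weaker $L^2$ norm, so no regularity beyond ${\mathbb B}^m_\lambda(\mathbb R)$ is needed on the solution side.
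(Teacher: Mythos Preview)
Your proposal is correct and follows essentially the same route as the paper's own argument: the same splitting $e_N=u_N-\pi_{N,\lambda}^s u$, $\tilde e_N=u-\pi_{N,\lambda}^s u$, the same use of $\tilde a_s=a_s+(\rho-1)(\cdot,\cdot)$ together with the orthogonality \eqref{bilinear} to reduce $\tilde a_s(\tilde e_N,v_N)$ to $(\rho-1)(\tilde e_N,v_N)$, and then coercivity plus Cauchy--Schwarz and the triangle inequality combined with Theorem~\ref{maintheoremest2} and the $L^2$ interpolation bound for $f$. Your added remarks on well-posedness and on why $I_N^\lambda f$ is defined are fine supplements but do not change the structure.
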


\subsection{Numerical examples}
We now present several examples to show the convergence behaviour of the above spectral Galerkin method.
In all tests, we report the numerical errors in the $L^2$-norm, and set $\rho=1$. Here,  we only
consider the cases with $\lambda=0$ and $\lambda=0.5$, which correspond to the modified mapped Chebyshev rational functions and modified mapped Legendre functions, respectively.

\smallskip

{\bf Example 1:}  {\em Exponential decay  $f(x).$} \,We first consider equation \eqref{modeqn}  with  $f(x)=\exp(-{x^2}/{2})(1+x).$ Since the closed-form exact solution is not available, we take the numerical solution with $N=600$ as the reference solution. The convergence results with MMGFs for $\alpha=0.4, 1, 1.6$ are presented in Figure \ref{galerkinexponential1} (middle and right). In the left plot, we have also presented the convergence results for the Hermite function approach in \cite{MaoShen}.  It is clearly seen that the MMGFs approach outperforms the Hermite approximations for all cases, namely, the MMGFs approach admits much higher convergence rates. This can also be seen form Table 1, where we have presented the order of convergence for both approaches.

\begin{figure}[!t]
\centering
\begin{minipage}[c]{0.33\textwidth}
\centering
\includegraphics[height=4cm,width=4.3cm]{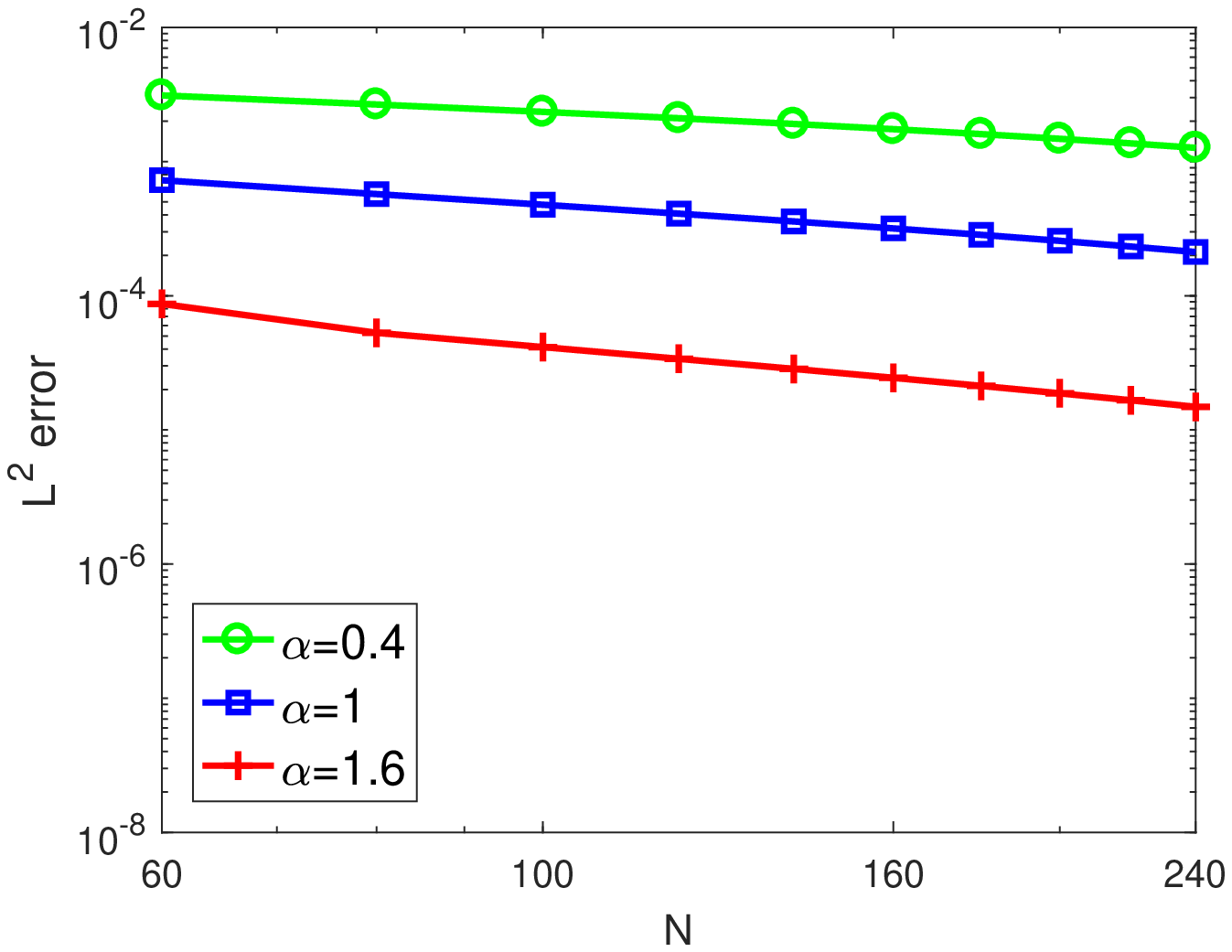}
\end{minipage}%
\begin{minipage}[c]{0.33\textwidth}
\centering
\includegraphics[height=4cm,width=4.3cm]{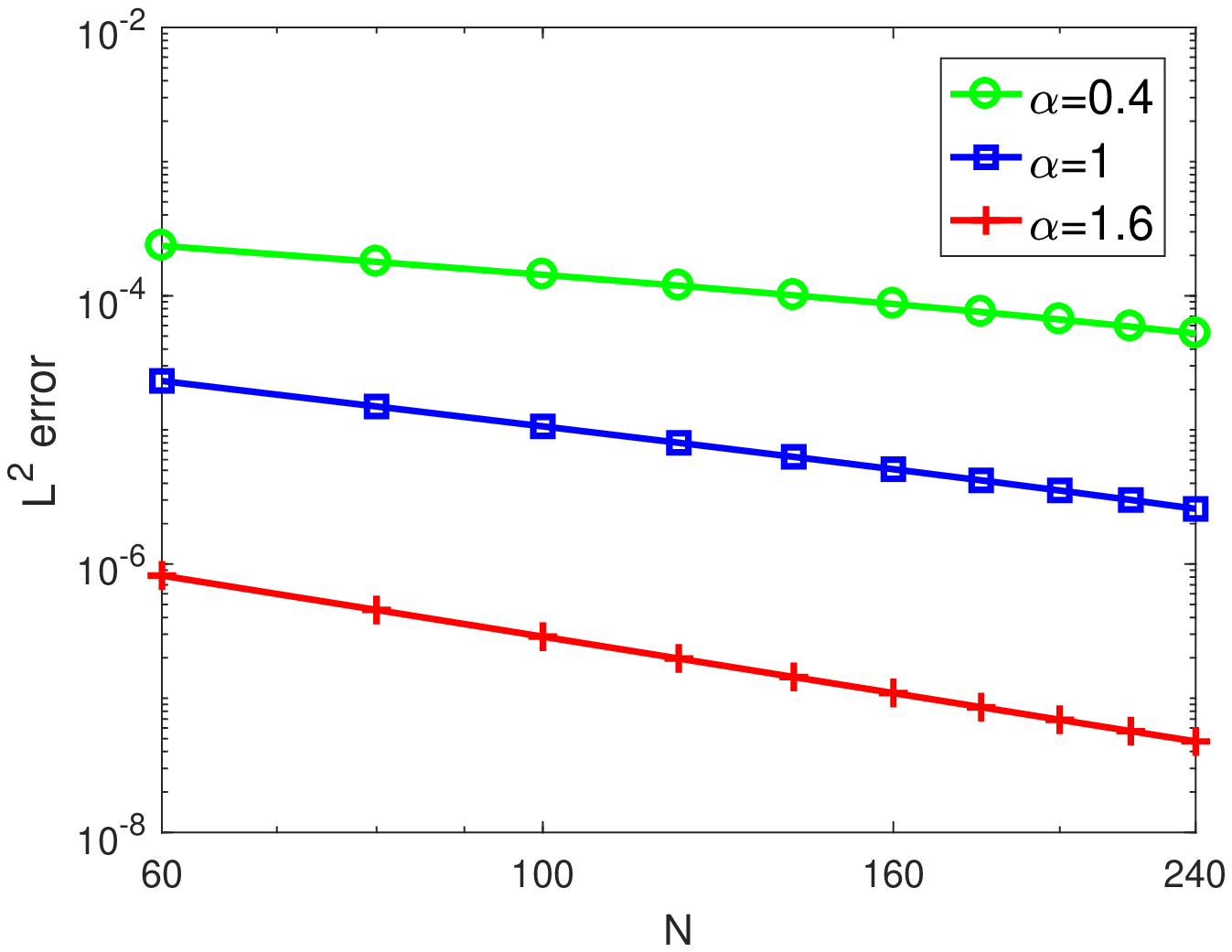}
\end{minipage}
\begin{minipage}[c]{0.33\textwidth}
\centering
\includegraphics[height=4cm,width=4.3cm]{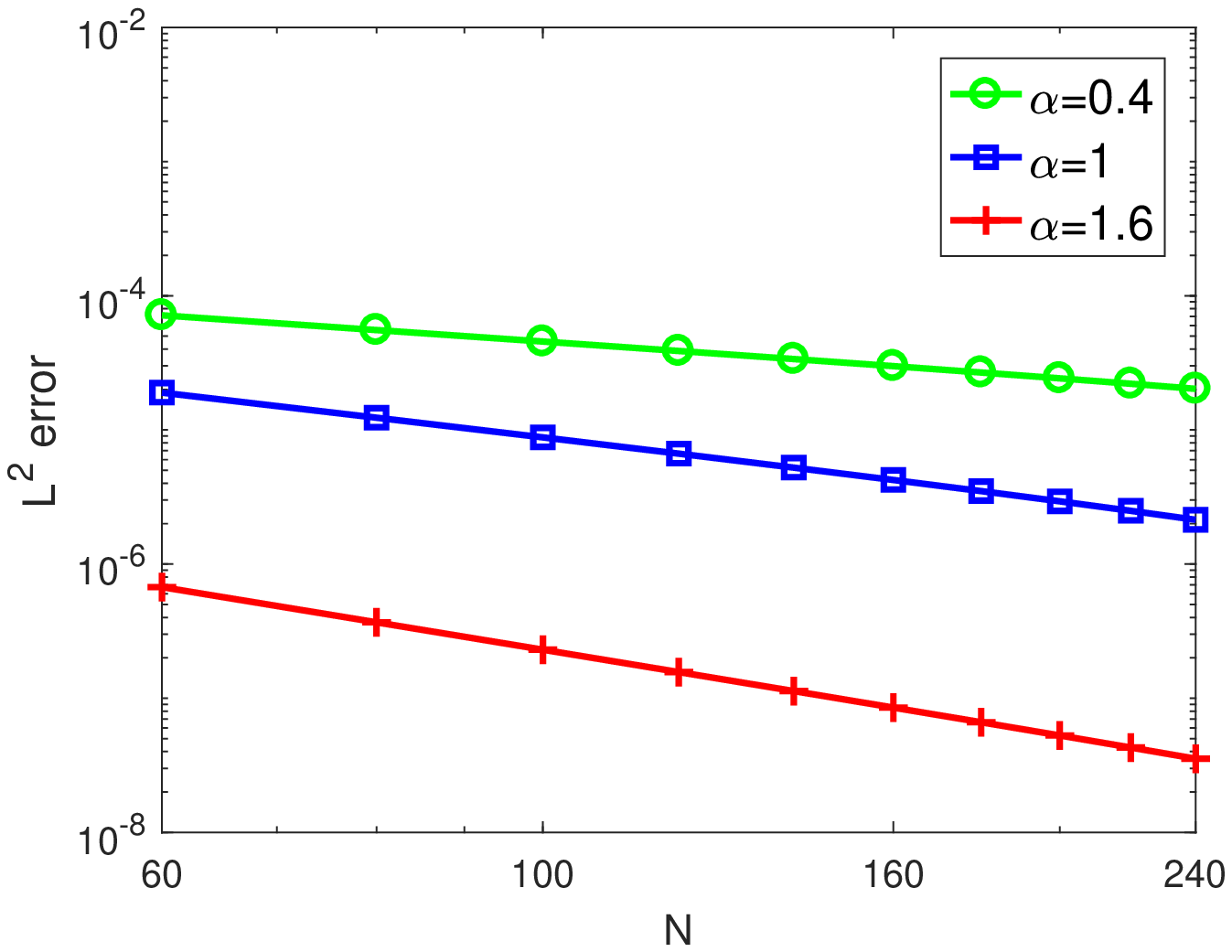}
\end{minipage}
\caption{$L^2$-error with $f(x)=\exp(-{x^2}/{2})(1+x)$. Left: Hermite function approach in \cite{MaoShen} with scaling factor $1/0.4$. Middle: MMGFs approach with $\lambda=0$ and scaling factor $\mu=5$. Right: MMGFs approach with $\lambda=0.5$ and scaling factor $\mu=5$.}
\label{galerkinexponential1}
\end{figure}

\begin{table}[!t]\label{tableA}
\caption{Rate of convergence using the generalized Hermite function, MMGFs with $\lambda=0$,  and MMGFs with $\lambda=0.5,$ $\alpha=1$ and $f(x)=\exp(-{x^2}/{2})(1+x).$ }
\begin{center}
\begin{tabular}{|c|c|c|c|c|c|c|}
\hline
 &\multicolumn{2}{|c|}{Hermite} & \multicolumn{2}{|c|}{MMGF $\lambda=0$}&\multicolumn{2}{|c|}{MMGF $\lambda=0.5$}\\
\hline
$N$ &$L^2$ error&Order&$L^2$ error&Order&$L^2$ error&Order\\
\hline
60 &7.26e-4&  --  & 2.32e-5& --&1.90e-5    & --\\
\hline
80 &5.72e-4& 0.83 &1.50e-5& -1.52&1.23e-5 &1.50     \\
\hline
100  &4.77e-4& 0.82&1.06e-5&-1.53&8.79e-6 &1.51    \\
\hline
120  &4.09e-4& 0.84 &8.02e-6&-1.55&6.65e-6&1.53     \\
\hline
140  &3.58e-4&  0.87&6.30e-6&-1.57&5.23e-6&1.56     \\
\hline
160  &3.17e-4&  0.90&5.09e-6&-1.59&4.23e-6&1.59       \\
\hline
180 &2.84e-4& 0.94 & 4.21e-6&-1.62&3.50e-6&1.62       \\
\hline
200 &2.57e-4& 0.98 &3.53e-6& -1.66&2.94e-6&1.66       \\
\hline
220  &2.32e-4 &1.03&3.01e-6& -1.70&2.49e-6&1.71        \\
\hline
240  &2.12e-4 & 1.08&2.58e-6&-1.75&2.14e-6&1.77        \\
\hline

 \end{tabular}
\end{center}
\label{convergencerate}
\end{table}

%\subsubsection{With algebraic decay right hand side}

\smallskip

{\bf Example 2:}  {\em Algebraic decay   $f(x).$}\,  We next consider equation \eqref{modeqn}  with an algebraic decay source term: $f\left(x\right)=\frac{1}{(1+x^2)^2}.$ The plots of the error decay for both  Hermite functions and MMGFs are  in Figure \ref{galerkinalgebraic1}. Indeed, we observe the convergence behaviour similar to the previous example -- the MMGF approach has a much better performance.
The comparison in Table 2 also shows that the proposed approach converges much faster than the Hermite method.
\begin{figure}[!t]
\centering
\begin{minipage}[c]{0.33\textwidth}
\centering
\includegraphics[height=4cm,width=4.3cm]{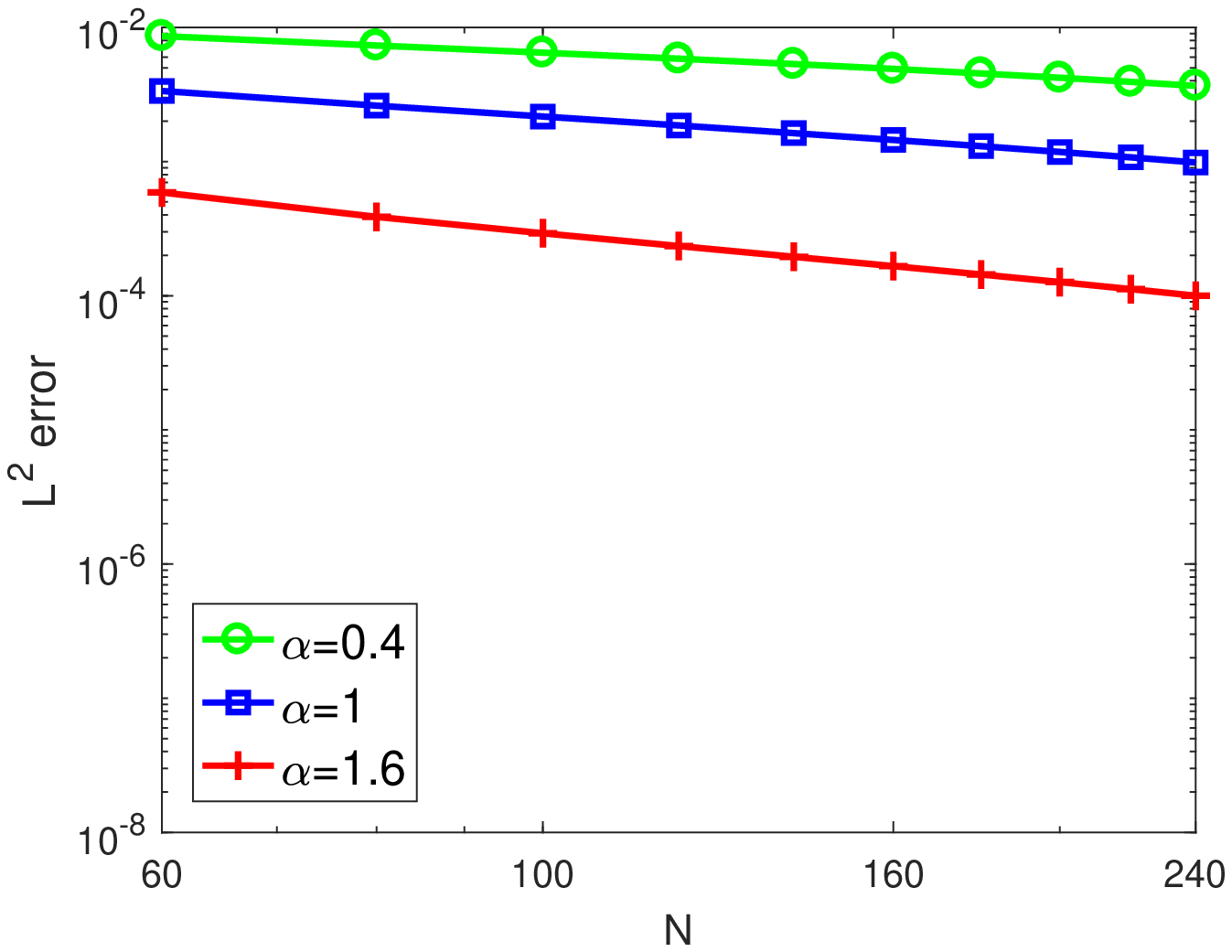}
\end{minipage}%
\begin{minipage}[c]{0.33\textwidth}
\centering
\includegraphics[height=4cm,width=4.3cm]{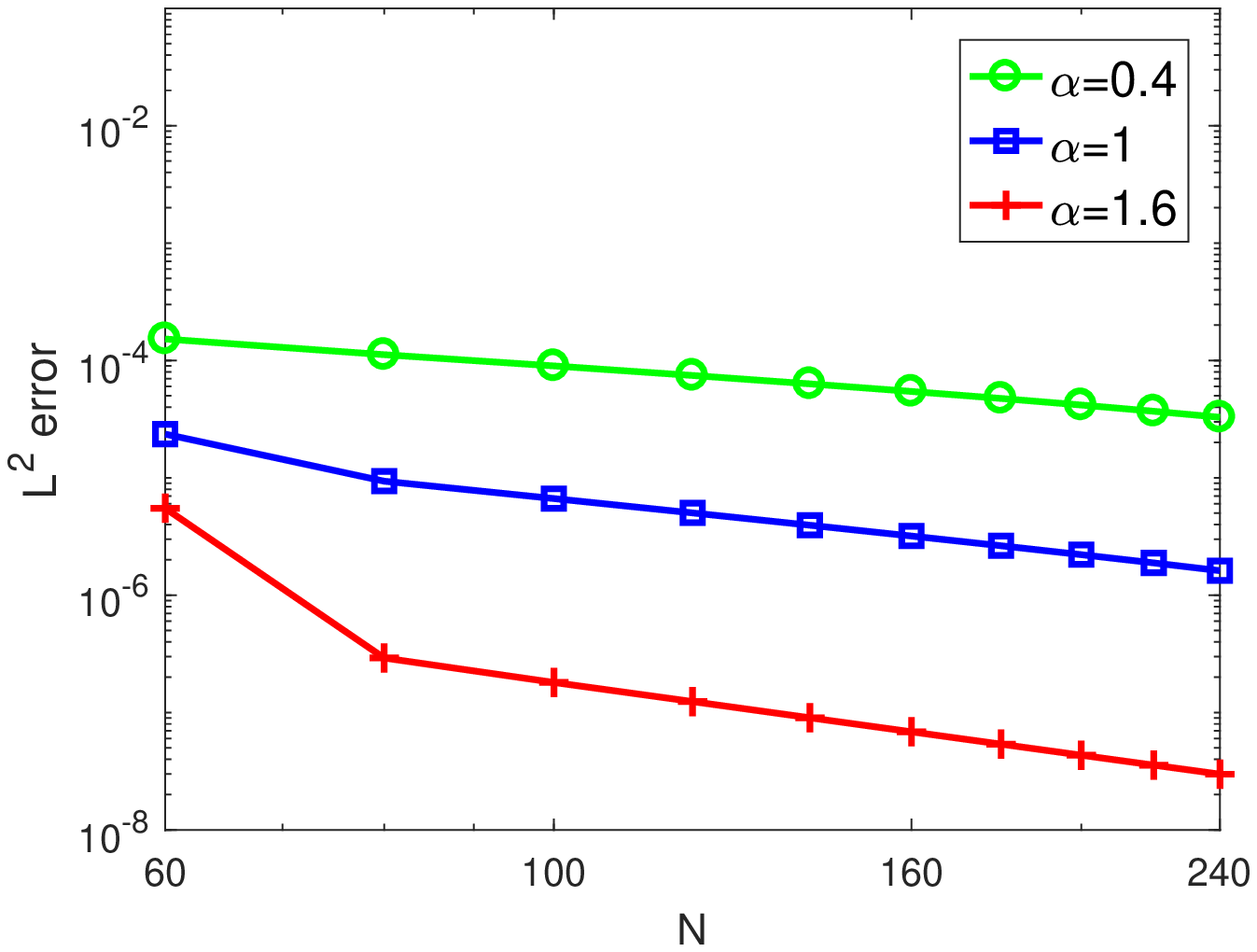}
\end{minipage}
\begin{minipage}[c]{0.33\textwidth}
\centering
\includegraphics[height=4cm,width=4.3cm]{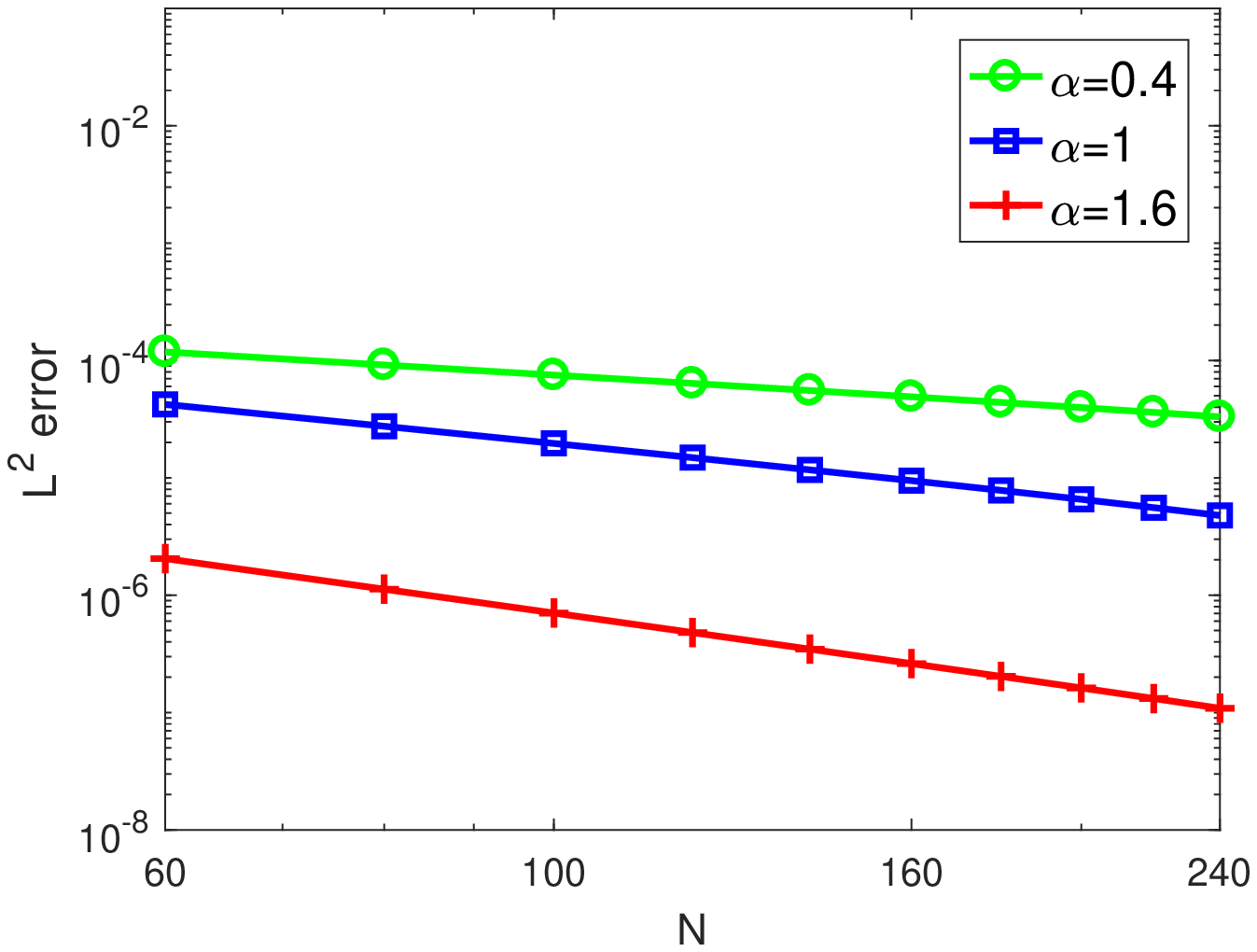}
\end{minipage}
\caption{$L^2$-error with $f(x)=\frac{1}{(1+x^2)^2}$. Left: Hermite function approach in \cite{MaoShen} with scaling factor $1/0.7$. Middle: The MMGFs approach with $\lambda=0$ and scaling factor $\mu=3$. Right: The MMGFs approach with $\lambda=0.5$ and scaling factor $\mu=3$.}
\label{galerkinalgebraic1}
\end{figure}

\begin{table}[!t]\label{tableB}
\caption{Rate of convergence using the generalized Hermite function, MMGFs with $\lambda=0$,  and MMGFs with $\lambda=0.5.$  $\alpha=1$ and $f\left(x\right)=\frac{1}{(1+x^2)^2}$.}
\begin{center}
\begin{tabular}{|c|c|c|c|c|c|c|}
\hline
&\multicolumn{2}{|c|}{Hermite} & \multicolumn{2}{|c|}{MMGF $\lambda=0$}&\multicolumn{2}{|c|}{MMGF $\lambda=0.5$}\\
\hline
$N$ &$L^2$ error&Order&$L^2$ error&Order&$L^2$ error&Order\\
\hline
60 &3.36e-3&  --  & 2.36e-5& --&4.23e-5    & --\\
\hline
80 &2.61e-3& 0.87 &9.36e-6& -3.21&2.75e-5 &1.49     \\
\hline
100  &2.17e-3& 0.84&6.66e-5&-1.53&1.96e-5 &1.51    \\
\hline
120  &1.86e-3& 0.84 &5.02e-6&-1.55&1.49e-5&1.53     \\
\hline
140  &1.63e-3& 0.86&3.95e-6&-1.56&1.17e-5&1.56     \\
\hline
160  &1.45e-3& 0.88&3.19e-6&-1.59&9.46e-6&1.59       \\
\hline
180 &1.30e-3& 0.91 &2.64e-6&-1.62&7.81e-6&1.62       \\
\hline
200 &1.18e-3& 0.94 &2.21e-6& -1.66&6.56e-6&1.66       \\
\hline
220  &1.08e-3 &0.97&1.88e-6& -1.70&5.57e-6&1.71        \\
\hline
240  &9.84e-4 & 1.01&1.62e-6&-1.75&4.78e-6&1.77        \\
\hline

 \end{tabular}
\end{center}
\label{convergencerate1}
\end{table}

\smallskip

To better understand the solution behaviours, we  present in Figure \ref{uchebyshev} the asymptotic behavior of the  ``exact'' solutions as $|x|\gg 1$ for the above two examples.  We see that, for both examples with very different decay of $f(x)$, the solution $u(x)$ decays at the same rate: $\lvert x\rvert^{-\alpha-1}$. This testifies the solution decays at a rate of  a power law, as opposite to the usual Laplacian.
 This also  explains the reason why MMGFs have a better performance than the Hermite functions.

 \begin{figure}[!t]
\centering
\begin{minipage}[c]{0.45\textwidth}
\centering
\includegraphics[height=4.2cm,width=5.5cm]{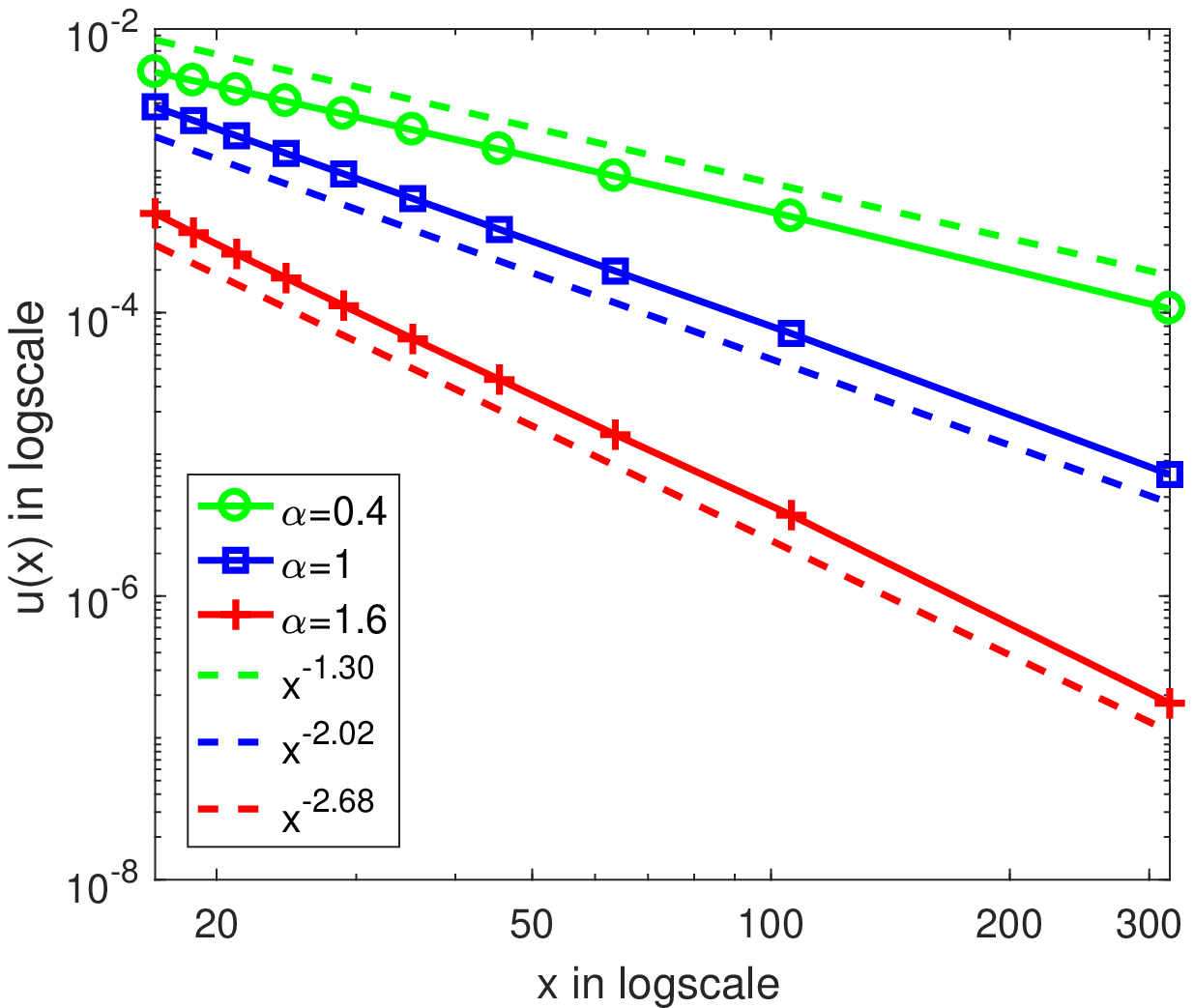}
\end{minipage}%
\begin{minipage}[c]{0.45\textwidth}
\centering
\includegraphics[height=4.2cm,width=5.5cm]{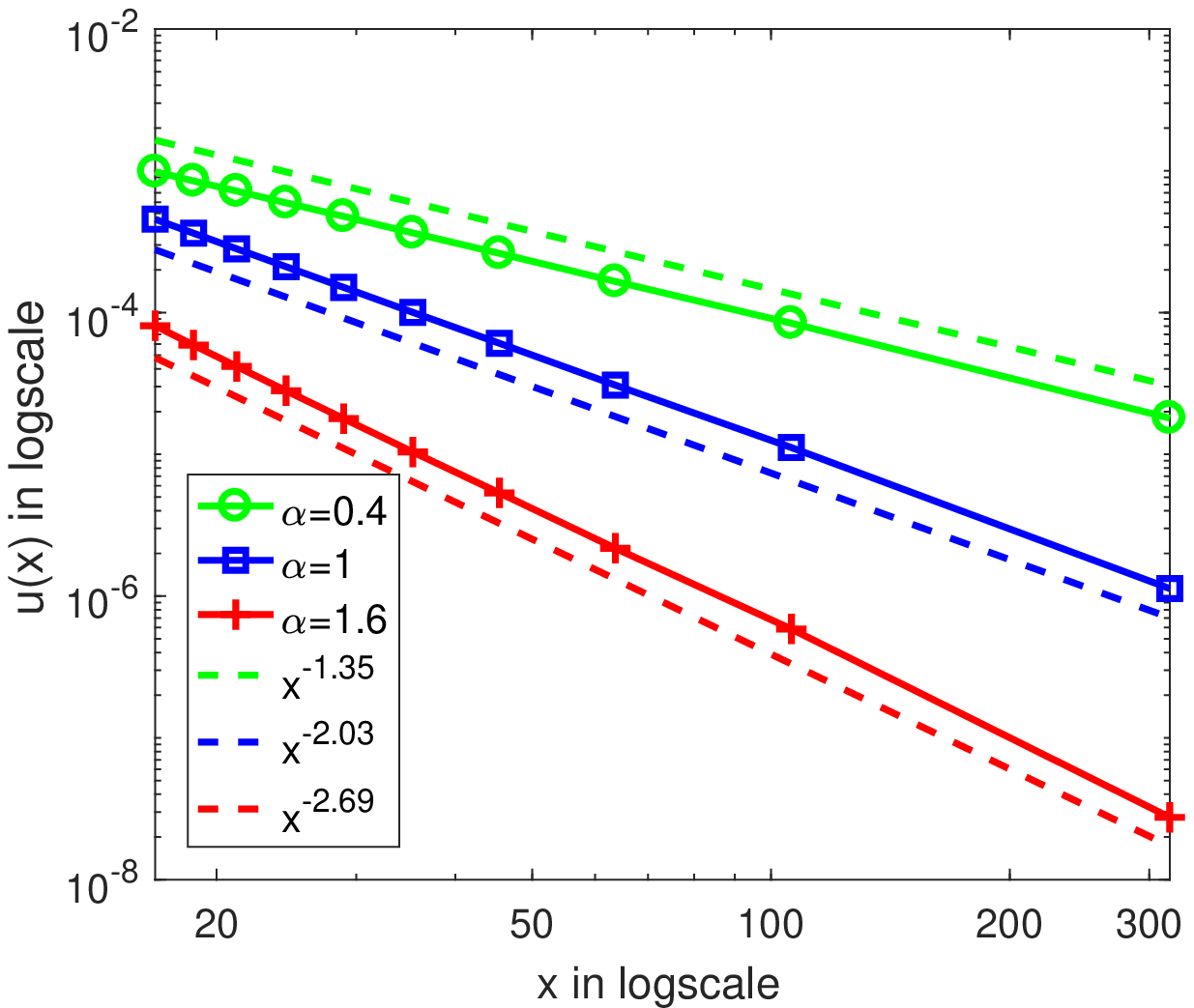}
\end{minipage}
\caption{Asymptotic behavior of $u(x)$ with different $\alpha$. Left: $f(x)=\exp(-{x^2}/{2})(1+x)$. Right: $f(x)=\frac{1}{(1+x^2)^2}$.}
\label{uchebyshev}
\end{figure}

\section{Modified rational spectral-collocation methods}\label{sect:6}
With the formulas in Theorem \ref{thm:mainformula} at our disposal, we can directly generate the spectral fractional differentiation matrices and develop the direct collocation methods, similar to the Hermite collocation methods  in \cite{hermitecollocation}.  However, it seems nontrivial and largely open to analyse its convergence.
In fact, we can also implement the collocation method in the Fourier transformed domain which turns to be more a natural way to extend the method to multiple dimensions.

\subsection{Fractional differentiation matrices}
Let $\{x_j^\lambda, \omega_j^\lambda\}_{j=0}^N$ be the mapped Gegenbauer-Gauss collocation points and weights as given in
\eqref{mappedxj}.  For any $u_N\in V_N^\lambda,$ we write
%In this section, we derive the explicit formulas for the differential matrices of our spectral collocation method. By using the Lagrange type bases, we approximate the solution in the following way
\begin{equation*}
u_{N}(x)=\sum_{j=0}^{N-1}u_{j}l_{j}(x), \quad \textmd{with} \quad l_{j}(x_k^\lambda) = \delta_{jk}, \quad 0\leq j,k\leq N-1,
\end{equation*}
where $u_j=u_N(x_j^\lambda).$ Note that the corresponding Lagrange basis function $\{l_{j}(x)\}_{j=1}^N$ can be expressed as
%are defined as
%\begin{equation*}
%l_{j}(x)=\frac{(1+x_{j}^2)^{\frac{\lambda+1}{2}}}{(1+x^2)^{\frac{\lambda+1}{2}}}\prod_{i=0,i\neq j}^{N-1}\frac{x-x_{j}}{x_{i}-x_{j}}, \quad 0\leq j\leq N-1.
%\end{equation*}
%Here $x_{i},i=0,1,...,N-1$ are the mapped Gauss-Jacobi points. It is clear that we can express every Lagrange-type basis with rational functions $R_{k}^{\lambda}(x)$, i.e.,
\begin{equation*}
l_j(x)=\sum_{k=0}^{N-1} \, b_{k}^{j}R_{k}^{\lambda}(x), \quad \textmd{with} \quad  b_{k}^{j}=\frac{R_{k}^{\lambda}(x_{j}^\lambda)\omega_{j}^\lambda} {\gamma_{k}^{\lambda}},  \quad 0\leq j,k\leq N-1.
\end{equation*}
%where $\omega_{j}$, $j=0,1,...,N-1$ are the weights of the Gauss-Jacobi quadrature associated with Gegenbauer polynomials.
Consequently,  we can easily derive the associated differential matrix $\mathcal{D}^{L,\alpha,\lambda}$ with Lagrange type bases
\begin{equation}\label{rationalcollocationmatrix}
\mathcal{D}_{i,j}^{L,\alpha,\lambda}=(-\Delta)^{\alpha/2}l_{j}(x_i^\lambda)=\sum_{k=0}^{N-1}b_{k}^{j}(-\Delta)^{\alpha/2}R_{j}^{\lambda}(x_{i}^\lambda),
\end{equation}
where $(-\Delta)^{\alpha/2}R_{j}^{\lambda}(x_{i}^\lambda)$ can be computed via \eqref{LapR2n} and \eqref{LapR2np1}.

\subsection{Numerical examples}
We now present several numerical examples to show the performance of the spectral collocation method based on MMGFs. Notice that the collocation method is more practical for problems with variable coefficients and nonlinear problems. Also, we shall carry out comparisons with the Hermite collocation method in \cite{hermitecollocation}.

\subsubsection{A multi-term fractional model}  We first consider the following multi-term fractional Laplacian equation:
\begin{equation}\label{rationalexample5.4}
\sum_{j=1}^{J}\rho_{j} (-\Delta)^{\alpha_{j}/2}u(x) = f(x), \quad x\in\mathbb{R}; \quad u(x)\to 0,\;\; {\rm as}\;\; |x|\to\infty.
\end{equation}
Here we set $J=4$ and
\begin{equation*}
\begin{split}
& \alpha_1= 0,  \quad \alpha_2= 0.5 \quad \alpha_3= 1.5, \quad \alpha_4= 2,\\
%\end{equation*}
%and $\{\omega_j\}_{j=1}^J$ are the corresponding weight, i.e.,
%\begin{equation}
& \rho_1= \frac{\pi}{6},  \quad \rho_2= \frac{\pi}{3}, \quad \rho_3= \frac{\pi}{3}, \quad \rho_4= \frac{\pi}{6}.
\end{split}
\end{equation*}
Numerical results with two different souce terms are presented in Figure \ref{rationalmulti}. It can be seen that, similar to the Galerkin methods, the MMGF approach has a much better performance than the Hermite function approach in  all cases.
%\begin{figure}[htbp]
%\centering
%\includegraphics[width=8cm]{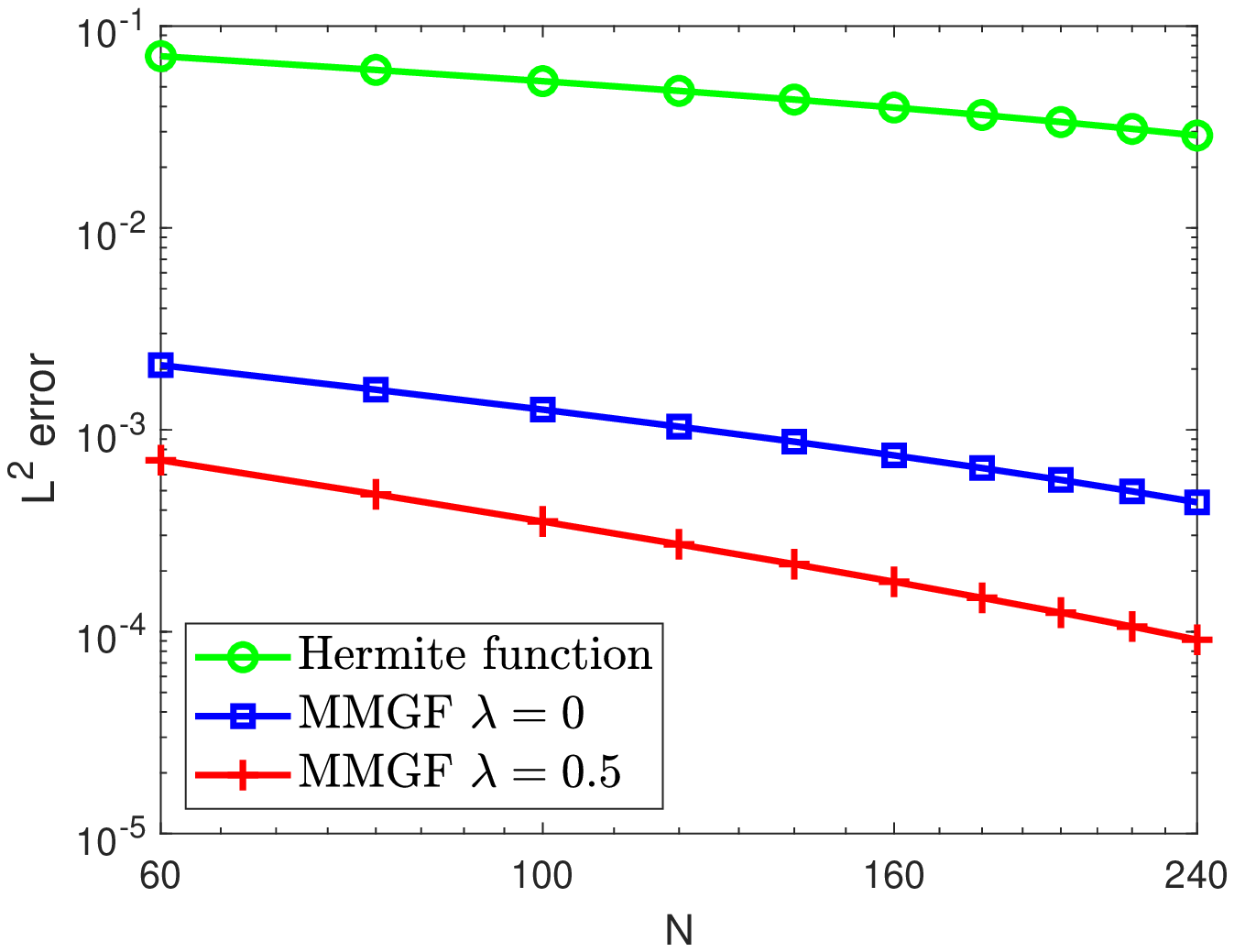}
%\caption{Numerical error for multi-term problem with $f\left(x\right)=\dfrac{1}{\left(1+x^2\right)^{1.8}}$.}
%\label{rationalmulti1}
%\end{figure}
 \begin{figure}[!t]
\centering
\begin{minipage}[c]{0.45\textwidth}
\centering
\includegraphics[height=4.2cm,width=5.5cm]{}
\end{minipage}%
\begin{minipage}[c]{0.45\textwidth}
\centering
\includegraphics[height=4.2cm,width=5.5cm]{rationalmulti1.eps}
\end{minipage}
\caption{Numerical results for the multi-term fractional model. Left: $f(x)=\exp(-\frac{x^2}{2})(1+x)$. Right: $f\left(x\right)=\dfrac{1}{\left(1+x^2\right)^{1.8}}$.}
\label{rationalmulti}
\end{figure}

\subsubsection{Fractional model with variable coefficients}
We next consider the following problem
\begin{equation}\label{rationalexample5.5}
\begin{split}
& (-\Delta)^{\alpha/2}u(x)+ r(x)u(x)=f(x),\quad x\in {\mathbb R},\\
&  u(x)\to 0,\;\; {\rm as}\;\; |x|\to\infty,
\end{split}
\end{equation}
where $r(x)=1+\exp(-x^2)$ and  $f(x)=\frac{1}{(1+x^2)^{1.2}}$. The convergence results for $\alpha=0.4,1,1.6$ are provided in Figure \ref{rationalnonlinear1} for both approaches. Again, the MMGFs spectral collocation method outperforms the Hermite collocation method.
\begin{figure}[!t]
\centering
\begin{minipage}[c]{0.33\textwidth}
\centering
\includegraphics[height=4.2cm,width=4.3cm]{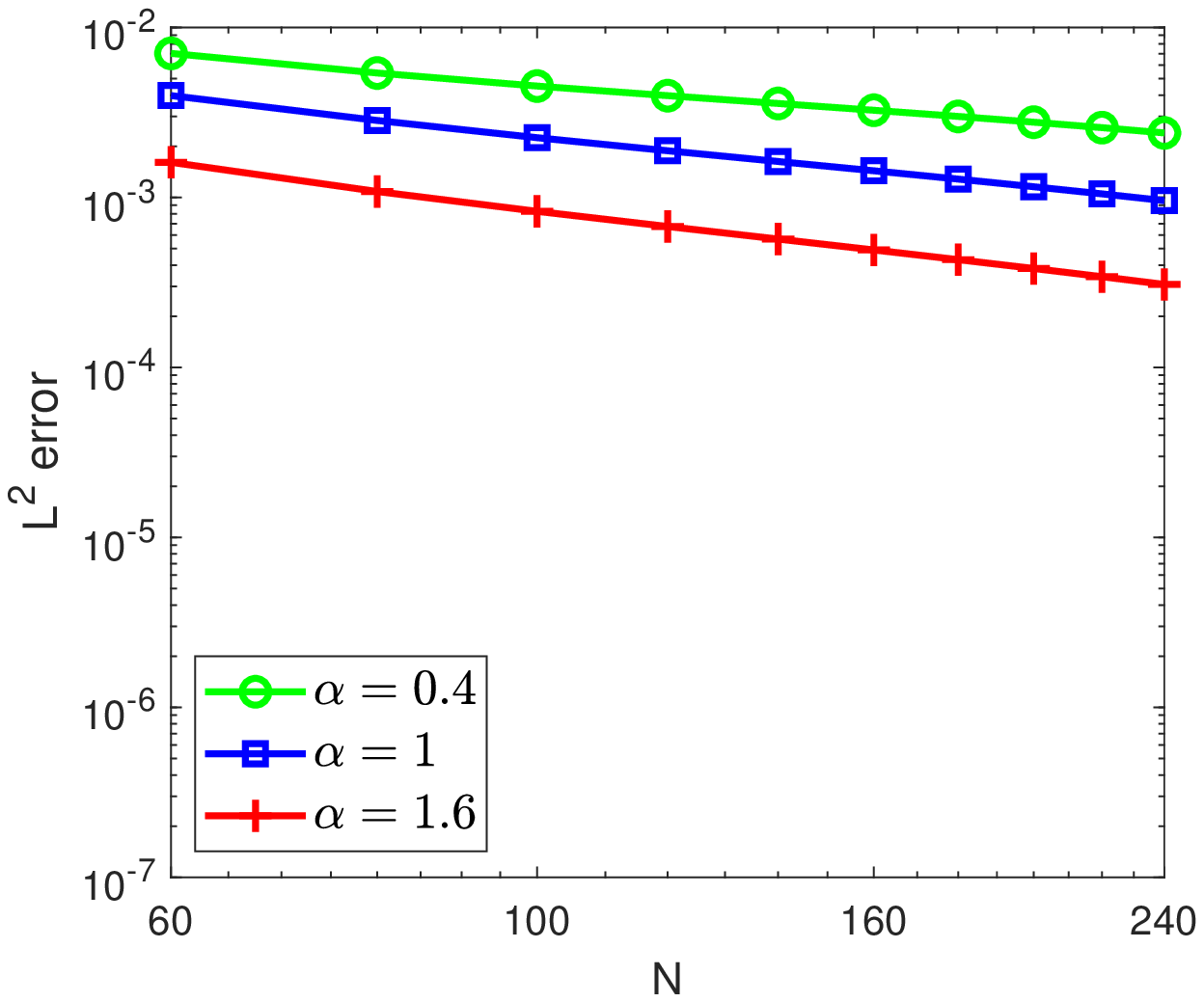}
\end{minipage}%
\begin{minipage}[c]{0.33\textwidth}
\centering
\includegraphics[height=4.2cm,width=4.3cm]{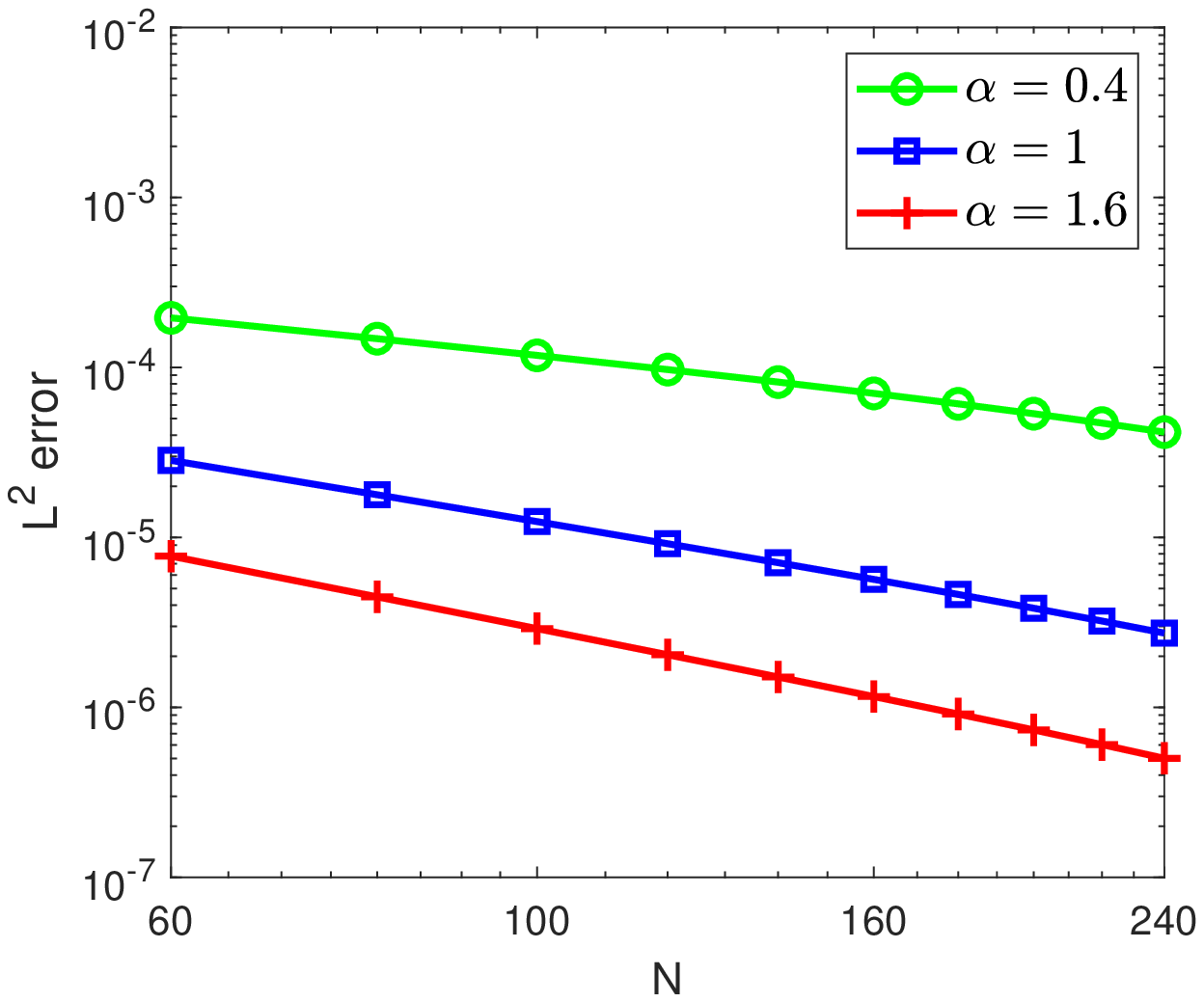}
\end{minipage}
\begin{minipage}[c]{0.33\textwidth}
\centering
\includegraphics[height=4.2cm,width=4.3cm]{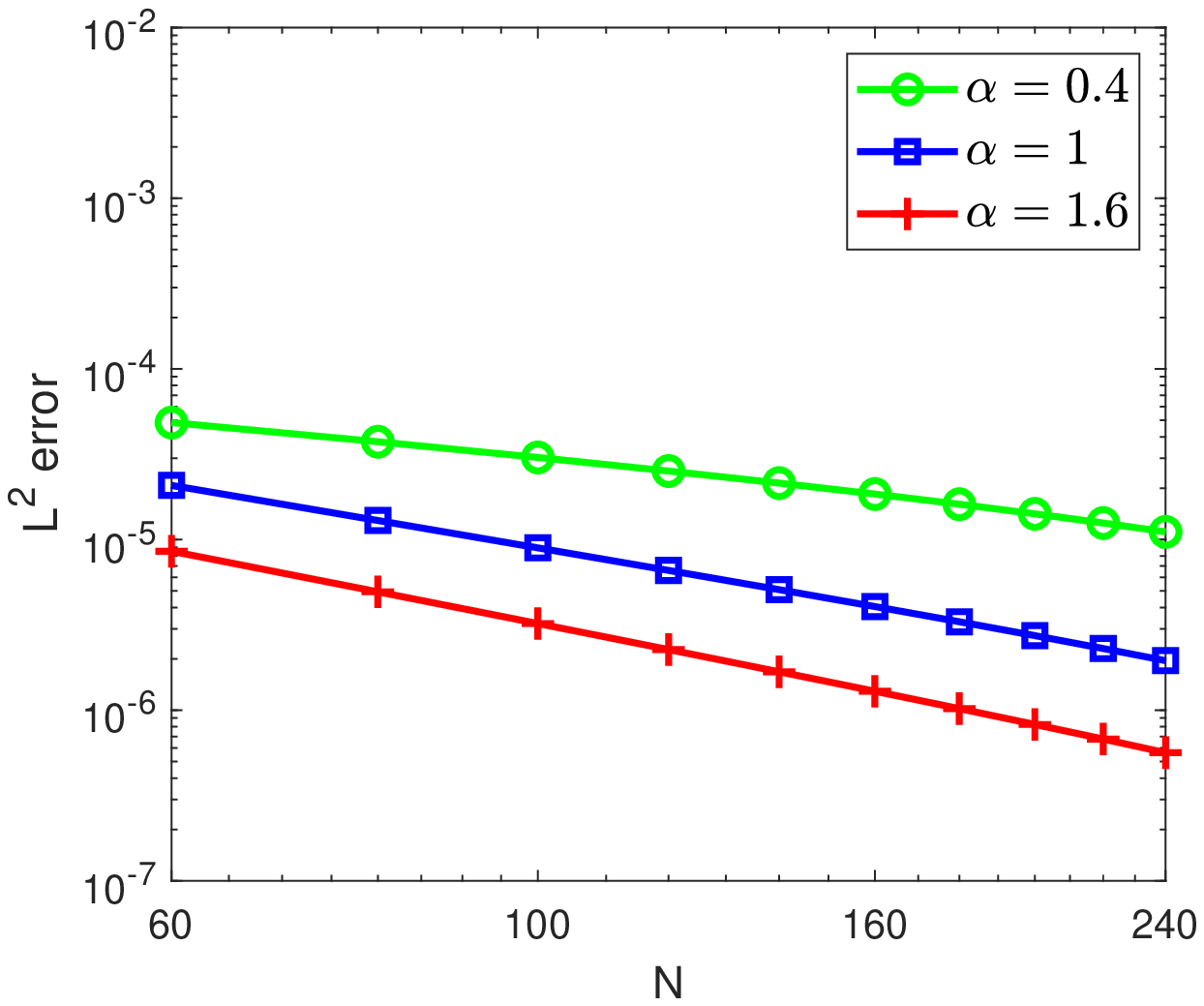}
\end{minipage}
\caption{Approximation error for equation \eqref{rationalexample5.5} with $f(x)=\frac{1}{(1+x^2)^{1.2}}$. Left: Hermite collocation methods in \cite{hermitecollocation} with scaling factor $\mu=2.5.$ Middle: The MMGFs collocation method  with $\lambda=0$ and scaling factor $\mu=5.$  Right: The MMGFs collocation method with $\lambda=0.5$ and scaling factor $\mu=5.$}
\label{rationalnonlinear1}
\end{figure}

\subsubsection{An eigenvalue problem} Finally,  we consider the following eigenvalue problem as in \cite{hermitecollocation}:
\begin{equation}\label{rationalexample 5.6}
((-\Delta)^{\alpha/2}+x^2)u(x)=\lambda u(x),\quad x\in {\mathbb R}.
\end{equation}
Notice that exact eigenvalues for the case of $\alpha=1$ are available in \cite{eigenvalue}. For this example, we shall compute the first three eigenvalues by the MMGFs spectral collocation method and the Hermite collocation methods for comparison. The numerical results are given in Figure \ref{eigenvalue}, which  shows that
 the MMGF collocation method is more accurate than the Hermite collocation method.

\begin{figure}
\centering
\begin{minipage}[c]{0.31\textwidth}
\centering
\includegraphics[height=4.0cm,width=4.3cm]{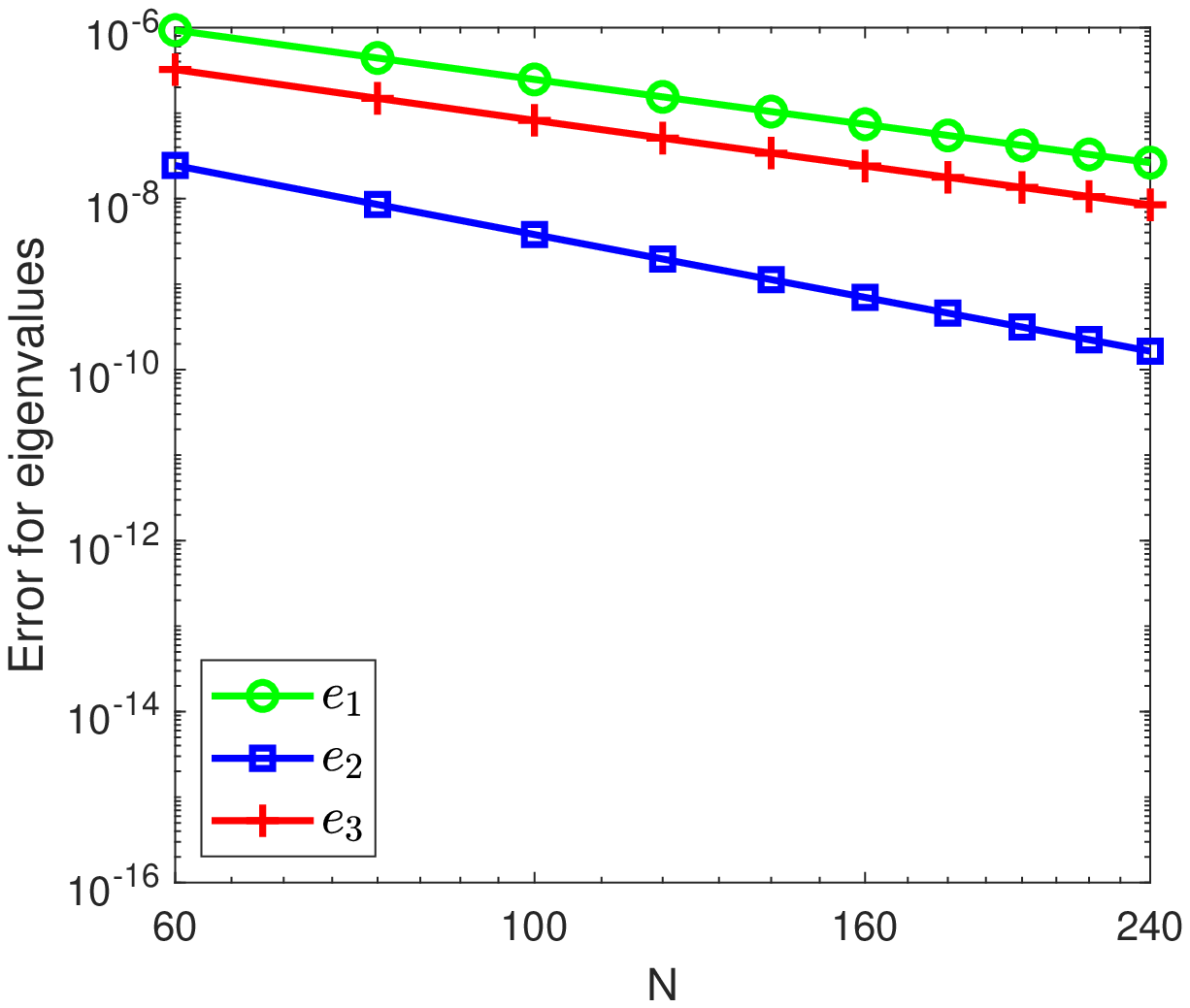}
\end{minipage}%
\begin{minipage}[c]{0.31\textwidth}
\centering
\includegraphics[height=4.0cm,width=4.3cm]{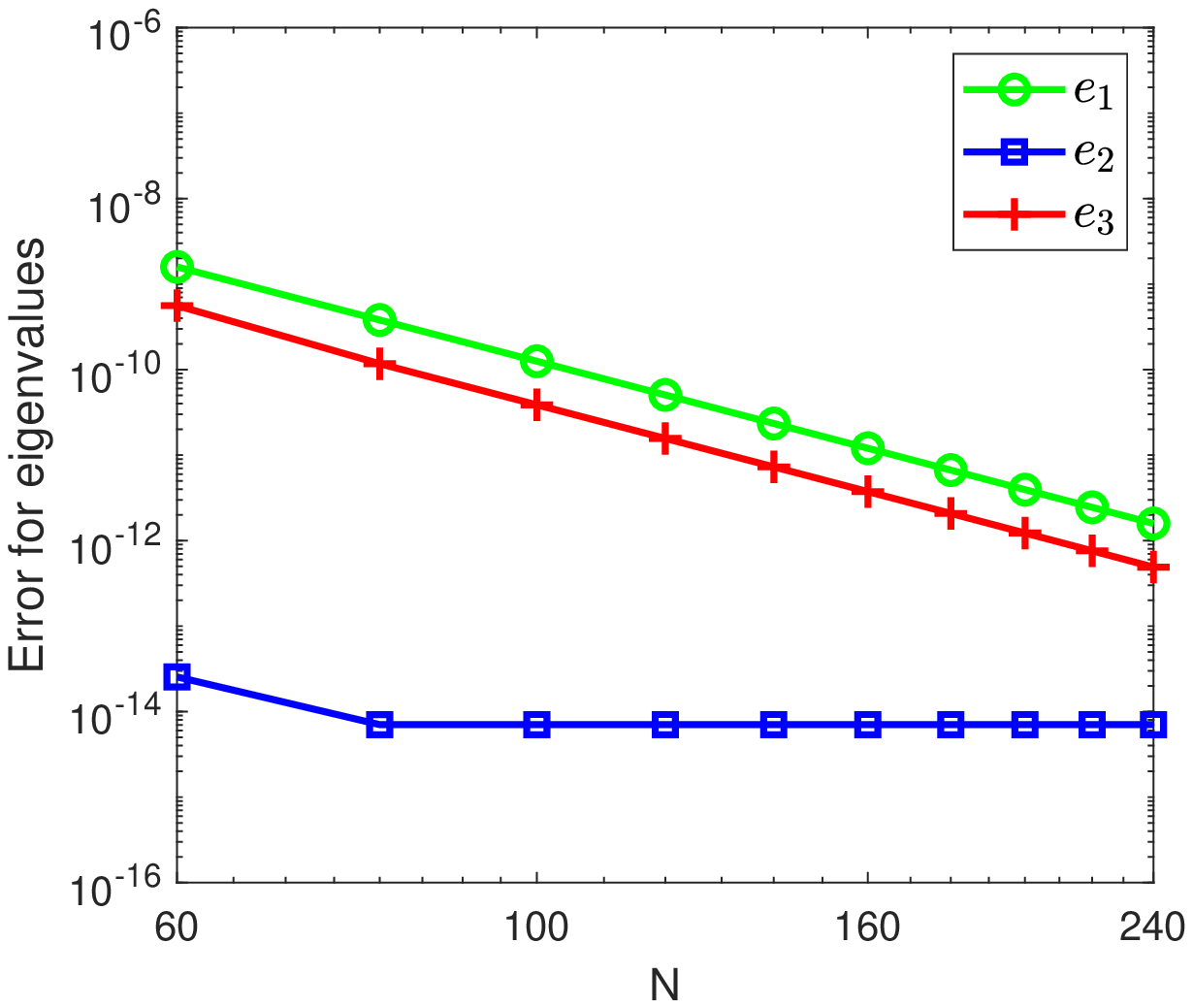}
\end{minipage}
\begin{minipage}[c]{0.31\textwidth}
\centering
\includegraphics[height=4.0cm,width=4.3cm]{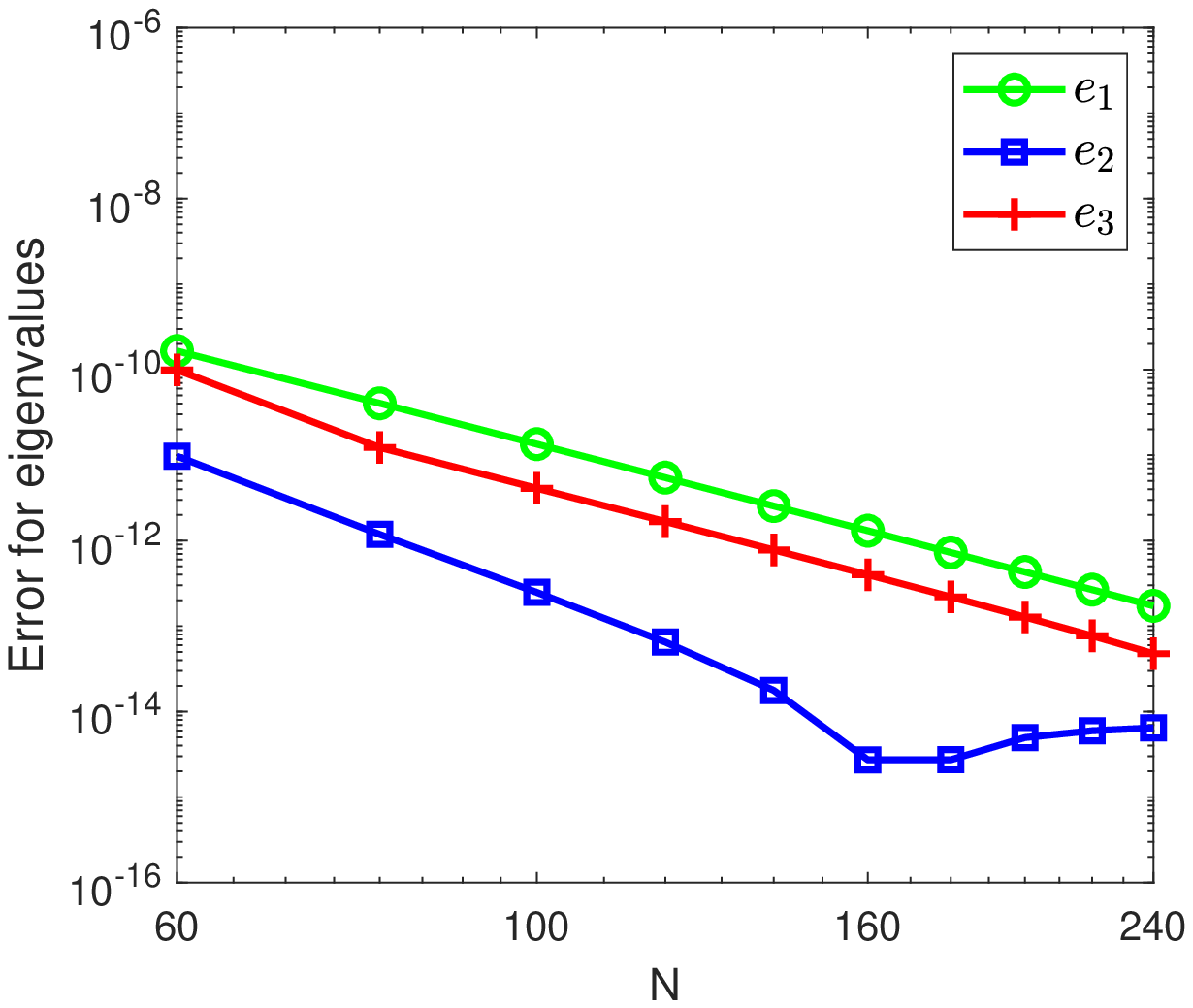}
\end{minipage}
\caption{Numerical error for the first three eigenvalues of \eqref{rationalexample 5.6}. Left: Generalized Hermite function. Middle: MMGFs with $\lambda=0$. Right: MMGFs with $\lambda=0.5$.}
\label{eigenvalue}
\end{figure}

\subsection{Spectral-collocation methods in  multiple  dimensions} To this end, we propose the modified rational collocation methods based on a formulation in the Fourier transformed domain  in multiple dimensions and show that it is more accurate than the Hermite spectral collocation methods in \cite{MaoShen}.

To fix the idea, we  consider the $d$-dimensional model problem:
\begin{equation}\label{multFractional}
\begin{split}
 &  (-\Delta)^{\alpha/2}u(x) +  \rho u(x) = f(x), \quad x\in  \mathbb{R}^d;\\
 & u(x)\to 0,\quad {\rm as}\;\; | x|\to \infty,
  \end{split}
\end{equation}
where we denote $ x=(x_1,\cdots, x_d)$ and $| x|=\sqrt{ x^t  x}.$
It is known that in the Fourier transformed domain, it can be expressed as
\begin{equation}\label{domaintrun}
    (|\xi|^{\alpha} +\rho)\hat{u}(\xi) = \hat{f}( \xi), \quad  \xi\in {\mathbb R}^d,
\end{equation}
where $\hat u,\hat f$ are the Fourier transform of $u,f,$ respectively.  Thus, we have
\begin{equation}
   \hat{u}( \xi)=\frac{ \hat{f}( \xi)}{| \xi|^{\alpha} +\rho}, \quad  \xi\in {\mathbb R}^d.
\end{equation}
That is, the Fourier transform of solution $u$ can be expressed explicitly as above.  This motivates the construction of the collocation method in the frequency space. To describe the algorithm, we denote
$$
\Upsilon_{\!N}:=\big\{ j=(j_1,\cdots, j_d)\,:\, j_i=0,1,\cdots,N,\;\; 1\le i\le d\big\},
$$
and define the tensorial grids and tensorial MMGFs as
\begin{equation}\label{bsxK}
 x_{ j}^{\lambda}=(x_{j_1}^\lambda, \cdots, x_{j_d}^\lambda),\quad  j\in \Upsilon_{\!N}; \quad
 R_{ n}^\lambda( x)=\prod_{i=1}^d R_{n_i}^\lambda(x_i).
\end{equation}

As the first step, we approximate $f( x)$ by the multidimensional interpolation:
\begin{equation}\label{tensor-intp}
{ I}_{N}^\lambda f( x)=\sum_{ n\in \Upsilon_{\!N}}\tilde{f}_{ n}\, R_{ n}^\lambda( x),
\end{equation}
where the coefficients $\{\tilde{f}_{ n}\}_{ n\in \Upsilon_{\!N}}$ can be computed from the samples $\{f( x_{ j}^\lambda)\}_{
 j\in \Upsilon_{\!N}}$ by the tensorial version of the quadrature \eqref{gujacobi}.
Then we have the approximation:
\begin{equation}\label{tensor-intps}
\hat{f}( \xi)\approx \widehat{{ I}_{N}^\lambda f}( \xi)=\sum_{ n\in \Upsilon_{\!N}}\tilde{f}_{ n}\, \prod_{i=1}^d {\mathscr F}[R_{n_i}^\lambda](\xi_i),
\end{equation}
where ${\mathscr F}[R_{n_i}^\lambda](\xi_i)$ can be computed by the formulas in Theorem \ref{thm:mainformulaAA} below.

Then the direct collocation approximation of $u( x)$ in the frequency space is given by
\begin{equation}\label{2dcollocation}
   \widehat{u}_{N}( \xi_{ j}^{\lambda})=\frac{ \widehat{{ I}_{N}^\lambda f}(
    \xi_{ j}^{\lambda})}{| \xi_{ j}^{\lambda}|^2+\rho},\quad  j\in \Upsilon_{\!N},
\end{equation}
where $\{ \xi_{ j}^{\lambda}\}$ are the tensorial grids as in \eqref{bsxK}.  With these samples, we can write the final approximation $u_N( x)$ by taking Fourier inverse transform of $\hat u_N( \xi)$ as follows:
\begin{equation}\label{funspAs}
u_N( x)=\sum_{ n\in \Upsilon_{\!N}}\tilde{u}_{ n}\, \prod_{i=1}^d {\mathscr F}^{-1}[R_{n_i}^\lambda](x_i),
\end{equation}
where the coefficients $\{\tilde{u}_{ n}\}_{ n\in \Upsilon_{\!N}}$ can be computed from
$\{\widehat{u}_{N}( \xi_{ j}^{\lambda})\}_{
 j\in \Upsilon_{\!N}}$ in \eqref{2dcollocation} by the quadrature formula (cf. \eqref{gujacobi}) as before.
Here, the inverse Fourier transforms can be computed by the formulas in Theorem \ref{thm:mainformulaAA} and Remark \ref{newABC} below.

Like Theorem \ref{thm:mainformula}, we have the following formulas for computing the Fourier transform of the MMGFs.
\begin{theorem}\label{thm:mainformulaAA} For real $\lambda>-1/2,$
the Fourier transform of the MMGFs can be computed by
\begin{equation}\label{LapR2nAA}
\begin{split}
 {\mathscr F}[R_{2n}^\lambda](\xi)& =  % \frac{a_n^\lambda} {(1+x^2)^{s+\frac{\lambda+1} 2}}\,
   a_n^\lambda  %\\ &
 \sum_{k=0}^{n}\frac{(-n)_{k}(n+\lambda)_{k}}
{(\lambda+\frac{1}{2})_{k}\, k!}  \frac{|\xi|^{k+\lambda/2}K_{k+\lambda/2}(|\xi|)} {2^{k+(\lambda-1)/2}\Gamma(k+(\lambda+1)/2)},
 \end{split}
\end{equation}
and
\begin{equation}\label{LapR2np1AA}
\begin{split}
 {\mathscr F}[R_{2n+1}^\lambda](\xi)&=  -{\rm i} \, {\rm sign}(\xi) \,b_n^\lambda  %\frac{(2s+1)b_n^\lambda\, x } {(1+x^2)^{s+\frac{\lambda} 2+1}}\,
 %\\ &  \times
 \sum_{k=0}^{n}\frac{(-n)_{k}(n+\lambda+1)_{k}}
{(\lambda+\frac{1}{2})_{k}k!} \,
 \frac{|\xi|^{k+\lambda/2}K_{k+(\lambda-1)/2}(|\xi|)} {2^{k+\lambda/2}\Gamma(k+1+\lambda/2)},
 \end{split}
\end{equation}
where the constants $a_n^\lambda, b_n^\lambda$  are defined in \eqref{anbnlam}.
\end{theorem}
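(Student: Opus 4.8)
The plan is to mirror the proof of Theorem~\ref{thm:mainformula}, but now tracking only the Fourier transform rather than the full fractional Laplacian, which is in fact a strictly simpler task since the intermediate computation $\hat v(\xi)$ was already carried out inside the proof of Theorem~\ref{Lapdelta}. First I would recall from the series representations \eqref{Rnlambda1}--\eqref{Rnlambda2} that $R_{2n}^\lambda$ and $R_{2n+1}^\lambda$ are finite linear combinations of the simple rational functions $1/(1+x^2)^\gamma$ with $\gamma=k+\tfrac{\lambda+1}{2}$ and $x/(1+x^2)^\gamma$ with $\gamma=k+\tfrac{\lambda}{2}+1$, respectively, with the \emph{same} coefficients $a_n^\lambda(-n)_k(n+\lambda)_k/((\lambda+\tfrac12)_k k!)$ and $b_n^\lambda(-n)_k(n+\lambda+1)_k/((\lambda+\tfrac12)_k k!)$ as appear in the target formulas. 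By linearity of $\mathscr F$, it then suffices to compute $\mathscr F[(1+x^2)^{-\gamma}]$ and $\mathscr F[x(1+x^2)^{-\gamma}]$.

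Next I would invoke the two computations already embedded in the proof of Theorem~\ref{Lapdelta}: equation \eqref{neweqnA} gives
\[
\mathscr F\Big[\frac{1}{(1+x^2)^\gamma}\Big](\xi)=\frac{2^{1-\gamma}}{\Gamma(\gamma)}\,|\xi|^{\gamma-\frac12}K_{\gamma-\frac12}(|\xi|),
\]
(valid for $\xi>0$ and extended by evenness, as noted there), and equation \eqref{neweqnA0} gives
\[
\mathscr F\Big[\frac{x}{(1+x^2)^\gamma}\Big](\xi)=-{\rm i}\,{\rm sign}(\xi)\,\frac{2^{1-\gamma}}{\Gamma(\gamma)}\,|\xi|^{\gamma-\frac12}K_{\gamma-\frac32}(|\xi|),
\]
using the oddness relation $\mathscr F[xv](-\xi)=-\mathscr F[xv](\xi)$ recorded after \eqref{neweqnA0}. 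Substituting $\gamma=k+\tfrac{\lambda+1}{2}$ into the first identity yields $|\xi|^{k+\lambda/2}K_{k+\lambda/2}(|\xi|)$ divided by $2^{k+(\lambda-1)/2}\Gamma(k+(\lambda+1)/2)$, which is exactly the $k$-th summand in \eqref{LapR2nAA}; substituting $\gamma=k+\tfrac{\lambda}{2}+1$ into the second identity produces the factor $|\xi|^{k+\lambda/2}K_{k+(\lambda-1)/2}(|\xi|)/(2^{k+\lambda/2}\Gamma(k+1+\lambda/2))$ of \eqref{LapR2np1AA}, with the prefactor $-{\rm i}\,{\rm sign}(\xi)$ coming out of the sum. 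Multiplying through by the combinatorial coefficients and summing over $k$ from $0$ to $n$ gives the claimed formulas.

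The only genuine points requiring care are bookkeeping ones: (i) checking that the parameter ranges in the integral identities \eqref{Kfourier1}--\eqref{Kfourier2} (namely $\gamma>0$ for the cosine case and $\gamma>1/2$ for the sine case) are met for every $\gamma$ arising here — for the even basis $\gamma=k+\tfrac{\lambda+1}{2}>0$ since $\lambda>-\tfrac12$, and for the odd basis $\gamma=k+\tfrac\lambda2+1>\tfrac12$, again using $\lambda>-\tfrac12$ — and (ii) keeping the normalization of the Fourier transform ($1/\sqrt{2\pi}$ convention, as in \eqref{neweqnA}) and the index shifts in the Bessel order $K_{\gamma-1/2}\mapsto K_{k+\lambda/2}$ versus $K_{\gamma-3/2}\mapsto K_{k+(\lambda-1)/2}$ consistent. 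I expect no real obstacle: the hard analytic work was done in Theorem~\ref{Lapdelta}, and this result is essentially a corollary obtained by stopping that computation one step earlier (before applying $|\xi|^{2s}$ and re-integrating). One could even present it as such, but stating it as a standalone theorem is cleaner for the multidimensional collocation algorithm in \eqref{tensor-intps} and \eqref{funspAs} that consumes it.
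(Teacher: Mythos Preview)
Your proposal is correct and follows essentially the same approach as the paper's own proof: both invoke the Fourier-transform identities \eqref{neweqnA} and \eqref{neweqnA0} for the building blocks $(1+x^2)^{-\gamma}$ and $x(1+x^2)^{-\gamma}$, then substitute into the finite expansions \eqref{Rnlambda1}--\eqref{Rnlambda2} by linearity. Your write-up is in fact more thorough, since you explicitly verify the parameter constraints $\gamma>0$ and $\gamma>1/2$ and track the index shifts in the Bessel orders, whereas the paper records the two transform formulas and concludes ``directly.''
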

\begin{proof} By \eqref{neweqnA}, we have that for $\gamma>0,$
$$
{\mathscr F}\Big[\frac 1 {(1+x^2)^\gamma}\Big](\xi) =\dfrac{2^{1-\gamma}} {\Gamma(\gamma)} |\xi|^{\gamma-\frac{1}{2}}{K_{\gamma-\frac{1}{2}}(|\xi|)},\quad \xi\in {\mathbb R}.
$$
Similarly, we derive from  \eqref{neweqnA0} that for $\gamma>1/2,$
$$
{\mathscr F}\Big[\frac x {(1+x^2)^\gamma}\Big](\xi)= -{\rm i}  \, \dfrac{2^{1-\gamma}} {\Gamma(\gamma)}\, {\rm sign}(\xi) \, |\xi|^{\gamma-\frac{1}{2}}{K_{\gamma-\frac{3}{2}}(|\xi|)},\quad \xi\in {\mathbb R}.
$$
Consequently, the formulas \eqref{LapR2nAA}-\eqref{LapR2np1AA} follow from \eqref{Rnlambda1}-\eqref{Rnlambda2}
directly.
\end{proof}
\begin{remark}\label{newABC} In \eqref{funspAs}, we need the inverse transform of $R_{n}^\lambda(\xi),$ which can be computed by the same formulas \eqref{Rnlambda1}-\eqref{Rnlambda2}. Indeed, by definition, we have
\begin{equation}\label{inverse}
{\mathscr F}^{-1}[R_{n}^\lambda](x)=\frac 1 {\sqrt{2\pi}}
\int_{-\infty}^{\infty} e^{{\rm i
} x\xi} R_{n}^\lambda(\xi)d\xi= \overline{{\mathscr F}[R_{n}^\lambda](x)}.
\end{equation}
\end{remark}

%$\xi, \eta$ are mapped Jacobi-Gauss points. In order to compute the solution $u$ in physical space, we can do the following:
%\begin{itemize}
%\item[1.] Perform a forward rational transform to obtain $\tilde{f}_{n,m}$ such that
%
%\item[2.] Take the Fourier transform to obtain $\widehat{I_{N}f}(\xi,\eta)$:
%\begin{equation*}
%\widehat{I_{N}f}(\xi,\eta)=\sum_{n,m=0}^{N}\tilde{f}_{n,m}\mathscr{F}{\big[R_{n}^\lambda\big](\xi)}\mathscr{F}{\big[R_{nm}^\lambda\big]}(\eta),
%\end{equation*}
%where $\mathscr{F}{\big[R_{n}^\lambda\big](\xi)}$ can be obtained from \eqref{neweqnA} or \eqref{neweqnA0}.
%\item[3.] Compute $\widehat{u}_{N}(\xi_{i},\eta_{j})$ by \eqref{2dcollocation}.
%\item[4.] Expand $\widehat{u}_{N}(\xi,\eta)$ with $R_{n}^\lambda(\xi)R_{m}^\lambda(\eta)$ to obtain $\tilde{u}$, i.e.,
%\begin{equation*}
%\hat{u}(\xi,\eta)\approx\sum_{n,m=0}^{N}\tilde{u}_{n,m}R_{n}^\lambda(\xi)R_{m}^\lambda(\eta).
%\end{equation*}
%\item[5.] Obtain $u$ by doing an inverse Fourier transform.
%\end{itemize}
%This algorithm can also be extended straightforwardly to higher dimensional spaces.

%\subsubsection{A two-dimensional example}
We now consider a two dimensional example with $f(x,y)=\exp(-\sqrt{x^2+y^2})$. Notice that the Fourier transform of this source term can be computed as
\begin{align*}
 \mathscr{F}[f](\xi,\eta)=\frac{1}{\left(1+\xi^2+\eta^2\right)^{3/2}}.
\end{align*}
The corresponding numerical results are presented in Figure \ref{rational2d1}. Once again,
the MMGF collocation method is more accurate and  converges faster than the Hermite collocation method.
 \begin{figure}
\centering
\begin{minipage}[c]{0.31\textwidth}
\centering
\includegraphics[height=4.2cm,width=4.3cm]{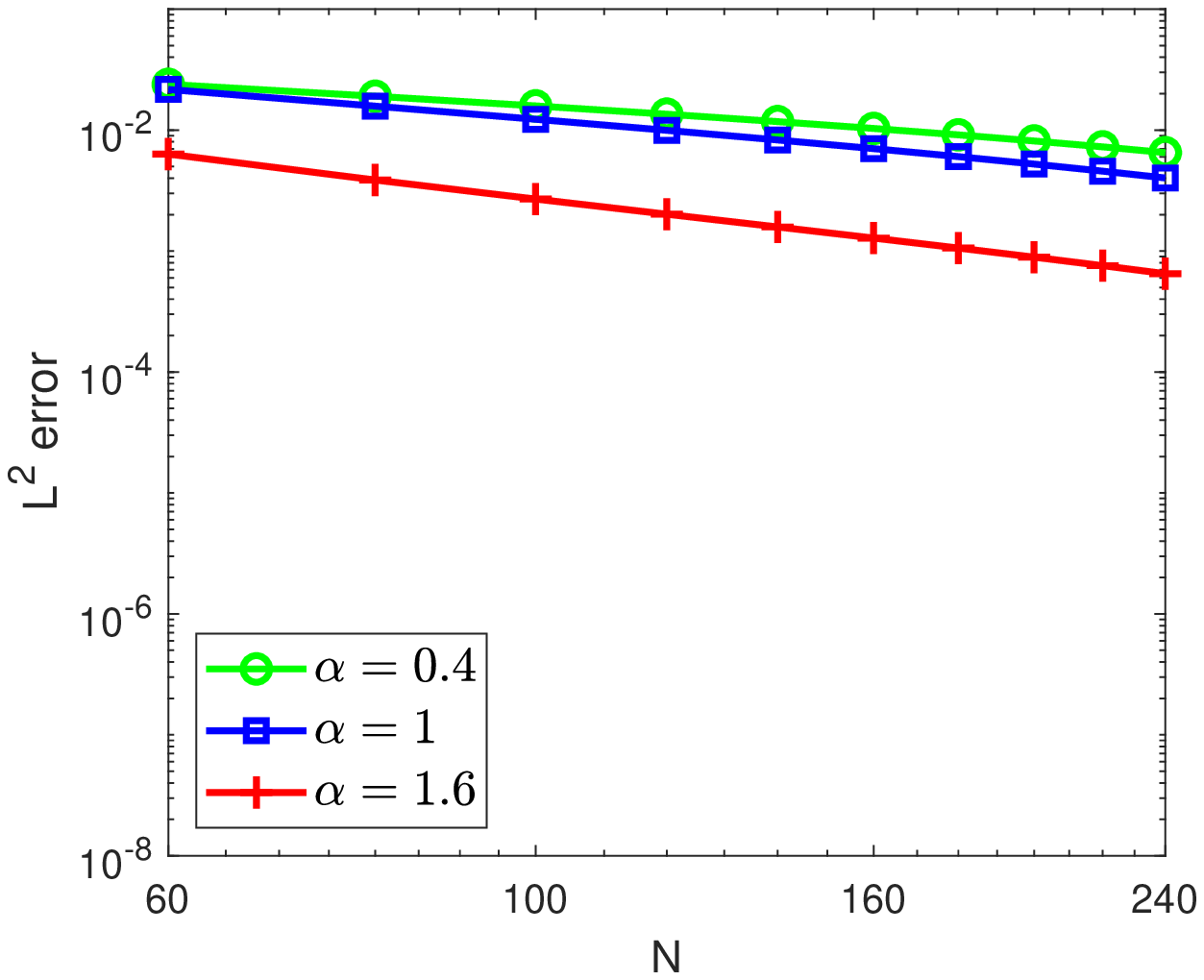}
\end{minipage}%
\begin{minipage}[c]{0.31\textwidth}
\centering
\includegraphics[height=4.2cm,width=4.3cm]{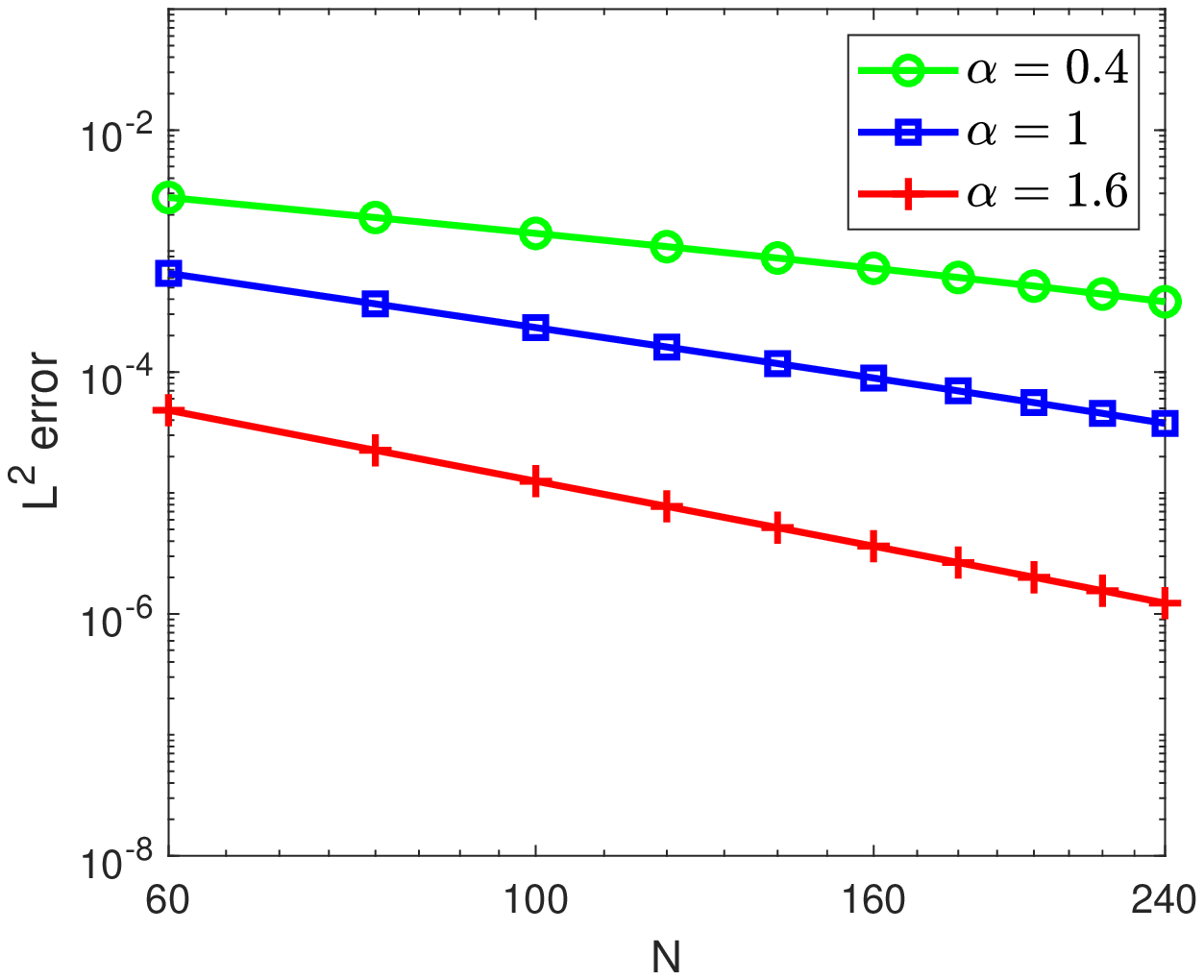}
\end{minipage}
\begin{minipage}[c]{0.31\textwidth}
\centering
\includegraphics[height=4.2cm,width=4.3cm]{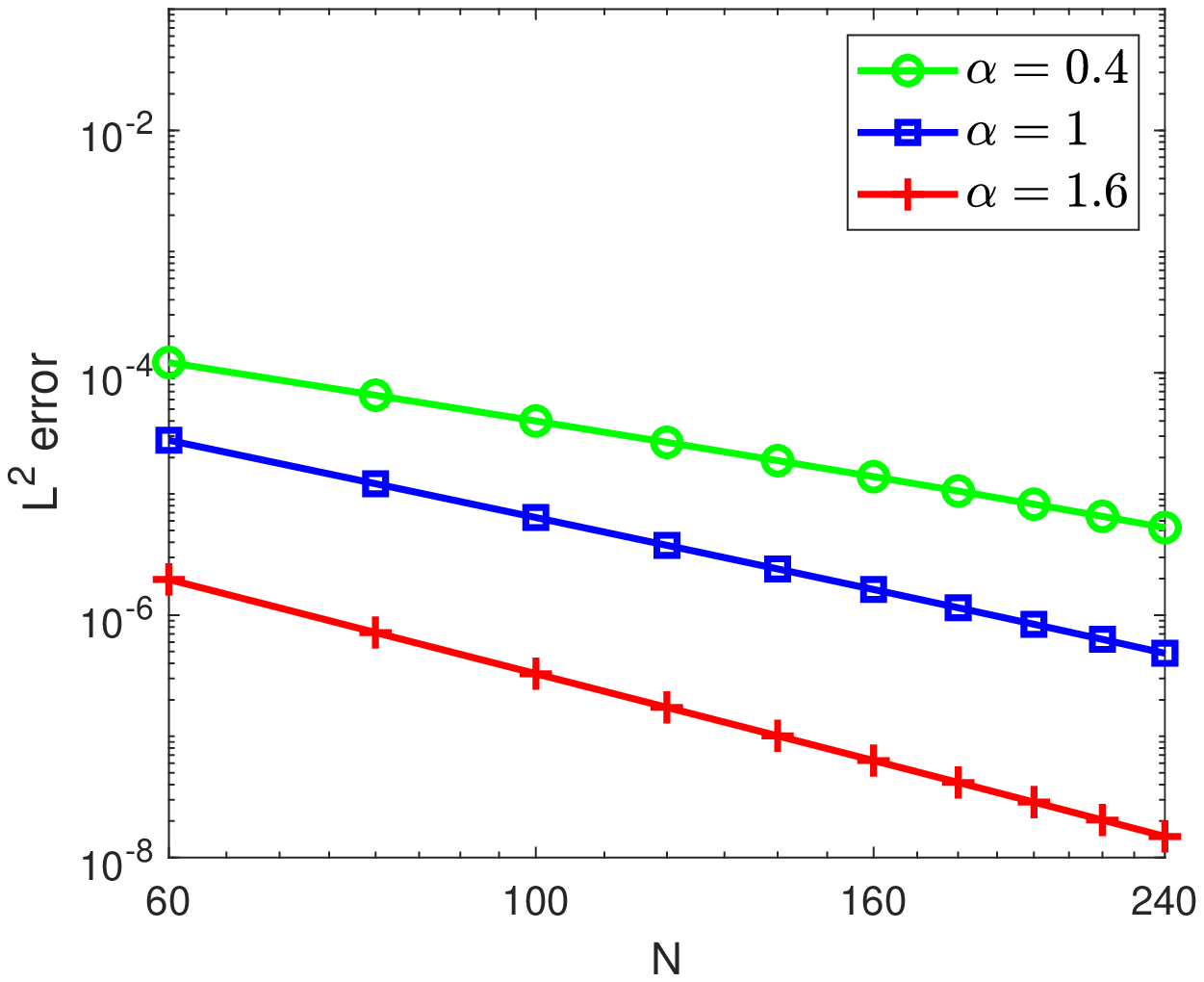}
\end{minipage}
\caption{Numerical results for the two dimensional example with $f(x,y)=\exp(-\sqrt{x^2+y^2})$. Left: Hermite collocation methods. Middle: MMGF collocation methods with $\lambda=0$. Right: MMGF collocation methods with $\lambda=0.5$.}
\label{rational2d1}
\end{figure}

\section{Summary and concluding remarks}
\label{sec7}
In this paper, we have developed accurate spectral methods using rational basis (or modified mapped Gegenbauer functions) for  PDEs with fractional Laplacian in unbounded domains. The main building block of the spectral algorithms is some explicit formulas for the Fourier transforms and fractional Laplacian of the rational basis. With these, we can construct rational spectral-Galerkin and collocation schemes by pre-computing the associated fractional differentiation matrices. We obtain optimal error estimates of rational spectral approximation in the fractional Sobolev spaces, and analyze the optimal convergence of the proposed Galerkin scheme. Numerical results show that the rational method outperforms the Hermite function approach. Future studies along this line include the error estimates of the rational collocation methods in section 6, fast pre-conditioner/solvers for high dimensional problems, and applications of the MMGFs approach to tempered fractional PDEs.

\bibliographystyle{siam}
\bibliography{refrational}

\end{document}